\def\setliststart#1{\setcounter{\@listctr}{#1}%
  \addtocounter{\@listctr}{-1}}
\newtheorem{theorem}{Theorem}[section]
\newtheorem{lemma}[theorem]{Lemma}
\newtheorem{proposition}[theorem]{Proposition}
\newtheorem{corollary}[theorem]{Corollary}
\newtheorem{remark}[theorem]{Remark}
\newtheoremstyle{break}
  {\topsep}{\topsep}%
  {\itshape}{}%
  {\bfseries}{}%
  {\newline}{}%
\theoremstyle{break}
\newtheorem{definition}[theorem]{Definition}
\numberwithin{equation}{section}
\newcommand{\R}{\mathbb{R}}
\newcommand{\N}{\mathbb{N}}
\newcommand{\PP}{\mathcal{P}}
\DeclareMathOperator*{\supp}{supp}
\DeclareMathOperator*{\esssup}{ess\ sup}
\DeclareMathOperator*{\ddiv}{div}
\def\moverlay{\mathpalette\mov@rlay}
\def\mov@rlay#1#2{\leavevmode\vtop{%
   \baselineskip\z@skip \lineskiplimit-\maxdimen
   \ialign{\hfil$\m@th#1##$\hfil\cr#2\crcr}}}
\newcommand{\charfusion}[3][\mathord]{
    #1{\ifx#1\mathop\vphantom{#2}\fi
        \mathpalette\mov@rlay{#2\cr#3}
      }
    \ifx#1\mathop\expandafter\displaylimits\fi}
\title[Mild and weak solutions of Mean Field Games problem for linear control systems]{Mild and weak solutions of Mean Field Games problem for linear control systems}
\author{Piermarco Cannarsa \and Cristian Mendico}
\address{Dipartimento di Matematica, Universit\`a di Roma ``Tor Vergata'', Via della Ricerca Scientifica 1, 00133 Roma, Italy}
\email{cannarsa@mat.uniroma2.it}
\address{GSSI-Gran Sasso Science Institute, Viale F. Crispi 7, 67100  L'Aquila and CEREMADE, Universit\'e Paris-Dauphine, Place du Maréchal de Lattre de Tassigny - 75775 PARIS Cedex 16 }
\email{cristian.mendico@gssi.it, cristian.mendico@dauphine.eu}
\date{\today}
\subjclass[2010]{35A01; 35A02; 49J30; 49J53; 49N90.}
\keywords{Mean field games; Mean field games equilibrium, Semiconcave estimates, Control systems}
\begin{document}

\maketitle

\maketitle
	\begin{abstract}
		The aim of this paper is to study first order Mean field games subject to a linear controlled dynamics on $\R^{d}$. For this kind of problems, we define Nash equilibria (called Mean Field Games equilibria), as Borel probability measures on the space of admissible trajectories, and mild solutions as solutions associated with such equilibria. Moreover, we prove the existence and uniqueness of mild solutions and we study their regularity:
		 we prove H\"older regularity of Mean Field Games equilibria and fractional semiconcavity for the value function of the underlying optimal control problem. Finally, we address the PDEs system associated with the Mean Field Games problem and we prove that the class of mild solutions coincides with a suitable class of weak solutions. 
		\end{abstract}

	\tableofcontents
	\section{Introduction}
	
    The goal of this paper is to analyze the first order Mean Field Games problem, using the Lagrangian formalism, where agents are subject to a linear controlled dynamics on $\R^{d}$.

The model we have in mind is a game or a system for which one is interested in controlling not the velocity of each agent but its acceleration. Therefore, the Lagrangian function and terminal cost of the common optimization problem depend on higher order derivates of the admissible paths on $\R^{d}$.

    We recall that Mean Field Games theory has been introduced simultaneously, but independently, by Lasry and Lions in \cite{bib:LL1}, \cite{bib:LL2} and \cite{bib:LL3}, and by M. Huang, R. P. Malham\'e and P. E. Caines in \cite{bib:HCM2} and \cite{bib:HCM1}. This theory is devoted to the study of deterministic and stochastic differential games with a large number of players, where each agent is rational and has a small influence on the whole evolution of the model.

  Fixed a time horizon $T>0$, we consider players subject to the following dynamics
    \begin{equation}\label{introdyn}
    \dot\gamma(t)=A\gamma(t)+B u(t), \quad \forall\ t \in [0,T]
    \end{equation}
where $A$ and $B$ are real matrices and $u$ is an admissible control function.
Each player aims to minimize a cost functional of the form
\begin{equation*}
\int_{0}^{T}{L(\gamma(s), u(s), m_{s})\ ds}+G(\gamma(T), m_{T}),
\end{equation*}
where, for each time $t \in [0,T] $, we have that $m_{t}$ is a flow of Borel probability measures on $\R^{d}$ depending continuously on $t$. More precisely, we define the metric space
\begin{align*}
\Gamma_{T}=\Big\{\gamma \in \text{AC}([0,T]): \gamma(t)\ \text{solution of}\ \eqref{introdyn}  \Big\},
\end{align*}
endowed with the uniform metric $\| \cdot \|_{\infty}$ and we consider Borel probability measures $\eta$ on $\Gamma_{T}$ with a finite first order moment. Then, denoting by  $e_{t}: \Gamma_{T} \to \R^{d}$ evaluation map, we define $m_{t}= e_{t} \sharp \eta$, where $\sharp$ stands for the push-forward operator.

The first problem we deal with, is the definition of Nash equilibria (Mean Field Games equilibria) for this class of problems. Inspired by recent works on Mean Field Games, see for instance \cite{bib:CC} and \cite{bib:GF}, given an initial distribution $m_{0} \in \PP(\R^{d})$ we define Nash equilibria as probability measures supported on minimizing curves of the above functional such that $e_{0} \sharp \eta = m_{0}$.  Then, by a fixed point argument we are able to prove that such measures exist and we find conditions yielding uniqueness, see, respectively, Theorem \ref{existenceofmfg} and Theorem \ref{uniquenessmfg}.

The idea of constructing Nash equilibria by considering measures on path space, which is typical of the Lagrangian approach, can also be found in the so-called probabilistic approach, see for instance \cite{bib:DV}, \cite{bib:DL}, \cite{bib:MF} and \cite{bib:RD}. In such settings, one studies more general stochastic Mean Field Games problems obtaining Nash equilibria as fixed points of certain relaxed functionals on the space of optimally controlled state processes and optimal control processes.

Then, we study the regularity of the so-called mild solutions of the Mean Field Games given by a pair $(V, m) \in C([0,T] \times \R^{d}) \times C([0,T]; \PP_{\alpha}(\R^{d}))$ where $m_{t}$ is the distribution of the agents at time $t \in [0,T]$ and $V$ is the value function of the above optimal control problem.  More precisely, we first prove that the map $t \to m_{t}$ is $\frac{1}{2}$-H\"older continuous in time, see Theorem \ref{holder}. Then, we prove the first main results of the paper which states that the value function $V$ is locally semiconcave on $[0,T] \times \R^{d}$, linearly in space and with a fractional semiconcavity modulus in time, see Theorem \ref{Vsemiconcavity}. 
Moreover, by standard tools of optimal control theory we get that $V$ is locally Lipschitz continuous, see Theorem \ref{Vlipschitz}, on $[0,T] \times \R^{d}$.

 Furthermore, we show that, under some extra assumptions on the Lagrangian function, it is possible to prove that there exists at least one Mean Field Games equilibrium $\eta$ such that the associated evolutionary distribution $m_{t}=e_{t} \sharp \eta$ is Lipschitz continuous in time. Consequently, the 
 first component $V$ of the corresponding mild solution is locally semiconcave in $[0,T] \times \R^{d}$ with a linear modulus of semiconcavity.
In conclusion, we prove the third main result of this paper that is the equivalence between mild solutions and weak solutions of the Mean Field Games system, Theorem \ref{weaksolutions}.

After this paper was submitted and posted on arXiv, similar results were presented in \cite{bib:ACP} for the special case of Mean Field Games with control on acceleration.

	\medskip\medskip\medskip
	The paper is organized as follows: in Section 2, we fix the notation used throughout the paper and we recall some notions and results from measure theory and control theory; in Section 3, we explain the general setting of the problem and we prove some preliminary results which are used later; in Section 4, we prove the existence and uniqueness of Mean Field Games equilibria; in Section 5, we study the regularity of the mild solutions of the Mean Field Games problem; in Section 6, we address the Mean Field Games system derived from the optimal control problem and we analyze the structure of the weak solutions of this system; in the Appendix, we give the proofs of a Lipschitz regularity results that is used in this paper. 
	
	\section{Preliminaries}
	

	\subsection{Notation}
We write below a list of symbols used throughout this paper.
\begin{itemize}
	\item Denote by $\mathbb{N}$ the set of positive integers, by $\mathbb{R}^d$ the $d$-dimensional real Euclidean space,  by $\langle\cdot,\cdot\rangle$ the Euclidean scalar product, by $|\cdot|$ the usual norm in $\mathbb{R}^d$, and by $B_{R}$ the open ball with center $0$ and radius $R$.
	\item Let $\Lambda$ be a real $n\times n$ matrix. Define the norm of $\Lambda$ by 
\[
\|\Lambda\|=\sup_{|x|=1,\ x\in\mathbb{R}^d}|\Lambda x|.
\]

\item Let $A$ be a Lebesgue-measurable subset of $\mathbb{R}^d$. Denote by $\mathcal{L}^{n}(A)$ the $n$-dimensional Lebesgue measure of $A$. Denote by $\mathbf{1}_{A}:\mathbb{R}^n\rightarrow \{0,1\}$ the characteristic function of $A$, i.e.,
\begin{align*}
\mathbf{1}_{A}(x)=
\begin{cases}
1  \ \ \ &x\in A,\\
0 &x \not\in A.
\end{cases}
\end{align*} 

\item Let $f$ be a real-valued function on $\mathbb{R}^d$. The set
\[
D^{*} f(x)=\left\{p\in\mathbb{R}^d: \exists \{x_{k}\}_{k \in \N},\ x_{k} \to x, \ \forall\ k \in \N \ \exists Df(x_{k}),\ Df(x_{k}) \to p  \right\},
\]
is called the set of reachable gradients of $f$ at $x$.

\item Let $A$ be a Lebesgue-measurable subset of $\mathbb{R}^{d}$. Let $1\leq p\leq \infty$. 
Denote by $L^p(A)$ the space of Lebesgue-measurable functions $f$ with $\|f\|_{p,A}<\infty$, where   
\begin{align*}
& \|f\|_{\infty, A}:=\esssup_{x \in A} |f(x)|,
\\& \|f\|_{p,A}:=\left(\int_{A}|f|^{p}\ dx\right)^{\frac{1}{p}}, \quad 1\leq p<\infty.
\end{align*}
Denote $\|f\|_{\infty,\mathbb{R}^d}$ by $\|f\|_{\infty}$ and $\|f\|_{p,\mathbb{R}^d}$ by $\|f\|_{p}$, for brevity.

\item $C_b(\mathbb{R}^d)$ stands for the function space of bounded uniformly  continuous functions on $\mathbb{R}^d$. $C^{2}_{b}(\mathbb{R}^{d})$ stands for the space of bounded functions on $\mathbb{R}^d$ with bounded uniformly continuous first and second derivatives. 
$C^k(\mathbb{R}^{d})$ ($k\in\mathbb{N}$) stands for the function space of $k$-times continuously differentiable functions on $\mathbb{R}^d$, and $C^\infty(\mathbb{R}^{d}):=\cap_{k=0}^\infty C^k(\mathbb{R}^{d})$. 
 $C_c^\infty(\mathbb{R}^{d})$ stands for the space of functions in $C^\infty(\mathbb{R}^{d})$ with compact support. Let $a<b\in\mathbb{R}$.
  $AC([a,b];\mathbb{R}^d)$ denotes the space of absolutely continuous maps $[a,b]\to \mathbb{R}^d$.
  
  \item For $f \in C^{1}(\mathbb{R}^{d})$, the gradient of $f$ is denoted by $Df=(D_{x_{1}}f, ..., D_{x_{n}}f)$, where $D_{x_{i}}f=\frac{\partial f}{\partial x_{i}}$, $i=1,2,\cdots,d$.
Let $k$ be a nonnegative integer and let $\alpha=(\alpha_1,\cdots,\alpha_d)$ be a multiindex of order $k$, i.e., $k=|\alpha|=\alpha_1+\cdots +\alpha_d$ , where each component $\alpha_i$ is a nonnegative integer.   For $f \in C^{k}(\mathbb{R}^{d})$,
define $D^{\alpha}f:= D_{x_{1}}^{\alpha_{1}} \cdot\cdot\cdot D^{\alpha_{d}}_{x_{d}}f$. 
\end{itemize}


\subsection{Measure Theory}

Denote by $\mathscr{B}(\mathbb{R}^d)$ the  Borel $\sigma$-algebra on $\mathbb{R}^d$ and by $\mathcal{P}(\mathbb{R}^d)$ the space of Borel probability measures on $\mathbb{R}^d$.
The support of a measure $\mu \in \mathcal{P}(\mathbb{R}^n)$, denoted by $\supp(\mu)$, is the closed set defined by
\begin{equation*}
\supp (\mu) := \Big \{x \in \mathbb{R}^d: \mu(V_x)>0\ \text{for each open neighborhood $V_x$ of $x$}\Big\}.
\end{equation*}
We say that a sequence $\{\mu_k\}_{k\in\mathbb{N}}\subset \mathcal{P}(\mathbb{R}^d)$ is weakly-$*$ convergent to $\mu \in \mathcal{P}(\mathbb{R}^d)$, denoted by
$\mu_k \stackrel{w^*}{\longrightarrow}\mu$, 
  if
\begin{equation*}
\lim_{n\rightarrow \infty} \int_{\mathbb{R}^d} f(x)\,d\mu_n(x)=\int_{\mathbb{R}^d} f(x) \,d\mu(x), \quad  \forall f \in C_b(\mathbb{R}^d).
\end{equation*}

For $p\in[1,+\infty)$, the Wasserstein space of order $p$ is defined as
\begin{equation*}
\mathcal{P}_p(\mathbb{R}^d):=\left\{m\in\mathcal{P}(\mathbb{R}^d): \int_{\mathbb{R}^d} |x_0-x|^p\,dm(x) <+\infty\right\},
\end{equation*}
where $x_0 \in \mathbb{R}^d$ is arbitrary. 
Given any two measures $m$ and $m^{\prime}$ in $\mathcal{P}_p(\mathbb{R}^n)$,  define
\[
\Pi(m,m^{\prime}):=\Big\{\lambda\in\mathcal{P}(\mathbb{R}^d\times \mathbb{R}^d): \lambda(A\times \mathbb{R}^d)=m(A),\ \lambda(\mathbb{R}^d\times A)=m'(A),\ \forall A\in \mathscr{B}(\mathbb{R}^d)\Big\}.
\]
The Wasserstein distance of order $p$ between $m$ and $m'$ is defined by
    \begin{equation*}\label{dis1}
          d_p(m,m')=\inf_{\lambda \in\Pi(m,m')}\left(\int_{\mathbb{R}^d\times \mathbb{R}^d}|x-y|^p\,d\lambda(x,y) \right)^{1/p}.
    \end{equation*}
    The distance $d_1$ is also commonly called the Kantorovich-Rubinstein distance and can be characterized by a useful duality formula (see, for instance, \cite{bib:CV})  as follows
\begin{equation*}
d_1(m,m')=\sup\left\{\int_{\mathbb{R}^d} f(x)\,dm(x)-\int_{\mathbb{R}^d} f(x)\,dm'(x) \ |\ f:\mathbb{R}^d\rightarrow\mathbb{R} \ \ \text{is}\ 1\text{-Lipschitz}\right\}, 
\end{equation*}
for all $m$, $m'\in\mathcal{P}_1(\mathbb{R}^d)$.

We now recall that weak-$\ast$ convergence is equivalent to convergence in the metric space $(\mathcal{P}_{p}(\mathbb{R}^d), d_{p})$ (see, for instance,  \cite{bib:CV}) and useful compactness criterion for subsets of $\PP_{p}(\R^{d})$.
\begin{proposition}\label{cm}
Let $\{\mu_k\}_{k\in \mathbb{N}}$ be a sequence of measures in $\mathcal{P}_p(\mathbb{R}^d)$ and let $\mu$ be another element of $\mathcal{P}_p(\mathbb{R}^d)$.
Then
\begin{itemize}
	\item [(i)] if $d_p(\mu_k,\mu)\to 0$, then $\mu_k \stackrel{w^*}{\longrightarrow}\mu$,  as $k\to+\infty$;
	\item [(ii)] if $\supp(\mu_k)$ is contained in  a fixed compact subset of $\mathbb{R}^d$ for all $k\in \mathbb{N}$ and $\mu_k \stackrel{w^*}{\longrightarrow}\mu$,  as $k\to+\infty$, then $d_p(\mu_k,\mu)\to 0$, as $k\to+\infty$.
\end{itemize}
 \end{proposition}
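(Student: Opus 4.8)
The plan is to establish the two implications separately, using the coupling definition of $d_p$ for (i) and the Kantorovich--Rubinstein duality for (ii). For (i), I would fix $f\in C_b(\R^d)$ and, for each $k$, select an optimal coupling $\lambda_k\in\Pi(\mu_k,\mu)$ for $d_p$, so that
\begin{equation*}
\int_{\R^d} f\,d\mu_k-\int_{\R^d} f\,d\mu=\int_{\R^d\times\R^d}\big(f(x)-f(y)\big)\,d\lambda_k(x,y).
\end{equation*}
I would then split the domain according to whether $|x-y|<\delta$ or $|x-y|\ge\delta$. On the near-diagonal part the integrand is small by the uniform continuity of $f$ (recall that $C_b(\R^d)$ here consists of bounded \emph{uniformly} continuous functions), while on the complement I would use $|f(x)-f(y)|\le 2\|f\|_\infty$ together with Chebyshev's inequality,
\begin{equation*}
\lambda_k\big(\{|x-y|\ge\delta\}\big)\le\delta^{-p}\int_{\R^d\times\R^d}|x-y|^p\,d\lambda_k(x,y)=\delta^{-p}\,d_p(\mu_k,\mu)^p.
\end{equation*}
Since $d_p(\mu_k,\mu)\to0$, letting $k\to\infty$ and then $\delta\to0$ gives $\int f\,d\mu_k\to\int f\,d\mu$.

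For (ii), let $K$ be the fixed compact set containing every $\supp(\mu_k)$; then $\supp(\mu)\subset K$ as well, being the support of a weak-$*$ limit, so all the measures may be viewed as elements of $\mathcal P(K)$ and tested against $C(K)$. The first step is to reduce the claim for $d_p$ to the claim for $d_1$: on $K$ one has $|x-y|^p\le(\operatorname{diam}K)^{p-1}|x-y|$, so inserting the optimal $d_1$-coupling into the definition of $d_p$ yields
\begin{equation*}
d_p(\mu_k,\mu)^p\le(\operatorname{diam}K)^{p-1}\,d_1(\mu_k,\mu).
\end{equation*}
Hence it suffices to prove $d_1(\mu_k,\mu)\to0$, and for this I would invoke the duality formula, restricting the supremum to the family $\mathcal F$ of $1$-Lipschitz functions on $\R^d$ that vanish at a fixed point of $K$. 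On $K$ these functions are uniformly bounded and equi-Lipschitz, so by Arzel\`a--Ascoli $\mathcal F$ is relatively compact in $C(K)$.

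The only genuine obstacle is to upgrade the hypothesis $\mu_k\stackrel{w^*}{\longrightarrow}\mu$, which a priori gives convergence of $\int f\,d\mu_k$ only for each \emph{fixed} $f$, to convergence that is \emph{uniform} over $\mathcal F$, so that the supremum defining $d_1$ tends to $0$. I would argue by contradiction: if $d_1(\mu_k,\mu)\not\to0$, there exist $f_k\in\mathcal F$ with $\int f_k\,d\mu_k-\int f_k\,d\mu\ge\varepsilon$ along a subsequence; by Arzel\`a--Ascoli one may extract a further subsequence with $f_k\to f$ uniformly on $K$, and then
\begin{equation*}
\Big|\int f_k\,d\mu_k-\int f_k\,d\mu\Big|\le 2\|f_k-f\|_{\infty,K}+\Big|\int f\,d\mu_k-\int f\,d\mu\Big|\longrightarrow0,
\end{equation*}
contradicting $\varepsilon>0$. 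This compactness of $\mathcal F$, guaranteed precisely by the uniform boundedness of the supports, is exactly what makes the weak-$*$ hypothesis strong enough to force Wasserstein convergence.
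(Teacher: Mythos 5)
Your proof is correct, but there is nothing in the paper to compare it against: the paper does not prove Proposition \ref{cm} at all, it simply recalls the statement and cites Villani's book. Your argument is essentially the standard textbook proof, and both halves are sound: in (i), the coupling identity plus the near-diagonal/far-off-diagonal split with Chebyshev's inequality is exactly right, and you correctly observe that the paper's definition of $C_b(\R^d)$ (bounded \emph{uniformly} continuous functions) is what makes the near-diagonal estimate work; in (ii), the reduction $d_p^p\le(\operatorname{diam}K)^{p-1}d_1$ on a common compact support, followed by the Arzel\`a--Ascoli compactness of normalized $1$-Lipschitz functions to upgrade pointwise weak-$*$ convergence to convergence uniform over the dual family, is the classical route. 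Two small points you should polish for completeness: first, either invoke the existence of optimal couplings or simply take $\lambda_k$ with $\int|x-y|^p\,d\lambda_k\le d_p(\mu_k,\mu)^p+1/k$, which costs nothing; second, in the contradiction step of (ii) the Arzel\`a--Ascoli limit $f$ is a priori only defined on $K$, whereas the weak-$*$ hypothesis is stated for test functions on $\R^d$, so you should extend $f$ (e.g.\ by $\tilde f(x)=\inf_{y\in K}\bigl(f(y)+|x-y|\bigr)$, truncated to be bounded) before testing; since $\mu_k$ and $\mu$ charge only $K$, the extension changes no integral. With these cosmetic fixes your proof is a complete, self-contained justification of a statement the paper leaves to the literature.
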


Let $\mathcal{K}$ be a subset of $\mathcal{P}(\mathbb{R}^d)$. We say that the set $\mathcal{K}$ has uniformly integrable $p$-moment with respect some (and thus any) $\bar{x} \in \R^{d}$ if and only if 
\begin{equation*}
\lim_{i \to \infty} \int_{\R^{d} \backslash B_{i}(\bar{x})}{|x-\bar{x}|^{p}\ \mu(dx)}=0, \quad \text{uniformly with respect to}\ \mu \in \mathcal{K}.
\end{equation*}
\begin{remark}\label{criterion}
\em{Notice that, if 
\begin{equation*}
0 < p < p_{1}, \quad \text{and} \quad \sup_{\mu \in \mathcal{K}} \int_{\R^{d}}{|x-\bar{x}|^{p_{1}}\ \mu(dx)} < +\infty,
\end{equation*}
then $\mathcal{K}$ has uniformly integrable $p$-moment.}
\end{remark}
\begin{theorem}[{\bf Compactness and convergence}]\label{compactness}
A set $\mathcal{K} \subset \PP_{p}(\R^{d})$ is relatively compact if and only if it is $p$-uniformly integrable and tight. Moreover, for a given sequence $\{ \mu_i \}_{i \in \N} \subset \PP_{p}(\R^{d})$ we have that 
\begin{equation*}
\lim_{i \to \infty} d_{p}(\mu_{i}, \mu)=0
\end{equation*}
if and only if $\mu_{i}$ narrowly converge to $\mu$ and $\{ \mu_i \}_{i \in \N}$ has uniformly integral $p$-moment.
\end{theorem}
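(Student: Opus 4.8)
The plan is to establish the sequential characterization first, and then derive the compactness statement from it together with Prokhorov's theorem (relative compactness in the narrow topology is equivalent to tightness). Throughout I fix a reference point $\bar{x} \in \R^{d}$ and write $w(x) = |x-\bar{x}|^{p}$; observe that the only coupling of $\mu$ with $\delta_{\bar{x}}$ is the product, so $d_{p}(\mu, \delta_{\bar{x}})^{p} = \int_{\R^{d}} w\, d\mu$, which identifies the $p$-moment with a Wasserstein distance and lets the metric axioms do the work.

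For the forward implication of the sequential statement, suppose $d_{p}(\mu_{i}, \mu) \to 0$. Narrow convergence is immediate from Proposition \ref{cm}(i). Convergence of the $p$-moments follows from the reverse triangle inequality $|d_{p}(\mu_{i}, \delta_{\bar{x}}) - d_{p}(\mu, \delta_{\bar{x}})| \le d_{p}(\mu_{i}, \mu) \to 0$, whence $\int w\, d\mu_{i} \to \int w\, d\mu =: M$. To upgrade this to uniform integrability I would use a continuous cutoff $\chi_{R}$ with $\mathbf{1}_{B_{R/2}(\bar{x})} \le \chi_{R} \le \mathbf{1}_{B_{R}(\bar{x})}$: since $w\chi_{R}$ is bounded and continuous, $\int w\chi_{R}\, d\mu_{i} \to \int w\chi_{R}\, d\mu$, so that $\limsup_{i} \int_{\R^{d}\setminus B_{R}(\bar{x})} w\, d\mu_{i} \le M - \int w\chi_{R}\, d\mu$, and the right-hand side vanishes as $R \to \infty$ by dominated convergence. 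Since each $\mu_{i}$ separately has finite $p$-moment, the finitely many small-index terms are absorbed by enlarging $R$, giving $\sup_{i} \int_{\R^{d}\setminus B_{R}(\bar{x})} w\, d\mu_{i} \to 0$.

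For the reverse implication, which is the crux, suppose $\mu_{i} \to \mu$ narrowly and $\{\mu_{i}\}$ has uniformly integrable $p$-moment. I would introduce the radial truncation $T_{R}(x) = \bar{x} + \min\{1, R/|x-\bar{x}|\}(x-\bar{x})$ onto the closed ball of radius $R$ and set $\mu_{i}^{R} = T_{R}\sharp \mu_{i}$, $\mu^{R} = T_{R}\sharp \mu$. Using $(\mathrm{id}, T_{R})\sharp \mu_{i}$ as a competitor coupling gives $d_{p}(\mu_{i}, \mu_{i}^{R})^{p} \le \int |x - T_{R}(x)|^{p}\, d\mu_{i} \le \int_{\R^{d}\setminus B_{R}(\bar{x})} w\, d\mu_{i}$, since $|x - T_{R}(x)| = |x-\bar{x}| - R \le |x-\bar{x}|$ on the tail and vanishes on $B_{R}(\bar{x})$; the analogous bound holds for $\mu$. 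By uniform integrability of $\{\mu_{i}\}$ and finiteness of the $p$-moment of $\mu$, these tails are small uniformly in $i$ once $R$ is large. As $T_{R}$ is continuous and bounded, $\mu_{i}^{R} \to \mu^{R}$ narrowly, and since these measures are supported in the fixed compact ball, Proposition \ref{cm}(ii) yields $d_{p}(\mu_{i}^{R}, \mu^{R}) \to 0$ for each fixed $R$. The triangle inequality $d_{p}(\mu_{i}, \mu) \le d_{p}(\mu_{i}, \mu_{i}^{R}) + d_{p}(\mu_{i}^{R}, \mu^{R}) + d_{p}(\mu^{R}, \mu)$ then closes the argument: first choose $R$ to make the outer two terms small uniformly in $i$, then send $i \to \infty$.

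Finally, for the compactness statement: if $\mathcal{K}$ is relatively compact in $(\PP_{p}(\R^{d}), d_{p})$, then it is relatively compact, hence tight, in the narrow topology by Prokhorov, while uniform integrability follows by contradiction, a sequence violating it admitting a $d_{p}$-convergent subsequence and contradicting the forward implication already proved. Conversely, if $\mathcal{K}$ is tight and $p$-uniformly integrable, any sequence in $\mathcal{K}$ admits a narrowly convergent subsequence by Prokhorov, and since uniform integrability passes to subsequences, the reverse implication shows this subsequence converges in $d_{p}$; hence $\mathcal{K}$ is relatively compact. The main obstacle is the reverse implication of the sequential characterization, where the truncation must simultaneously reconcile narrow convergence on compact sets with uniform control of the tails, so that neither the interior error nor the tail error can be reduced without worsening the other unless $R$ and $i$ are chosen in the correct order.
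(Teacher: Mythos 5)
Your proposal is correct, but there is no proof in the paper to measure it against: for Theorem \ref{compactness} the authors give no argument at all, they simply point to \cite[Theorem 7.1.5]{bib:AGS} and \cite[Lemma 5.7]{bib:CN}. What you have written is essentially a self-contained reconstruction of the standard proof in those references: the identity $d_{p}(\mu,\delta_{\bar{x}})^{p}=\int_{\R^{d}}|x-\bar{x}|^{p}\,d\mu$ plus a continuous cutoff for the forward implication, the radial truncation $T_{R}$ with the competitor coupling $(\mathrm{id},T_{R})\sharp\mu_{i}$ and Proposition \ref{cm}(ii) on a fixed compact ball for the reverse implication, and Prokhorov's theorem combined with the sequential characterization for the compactness statement. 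The order of quantifiers is handled correctly throughout (tail estimates uniform in $i$ for large $R$ first, then $i\to\infty$), and the absorption of finitely many small-index terms in the uniform-integrability step is right. Two points are left tacit and should be said explicitly if this were written out in full: (i) in both the reverse implication and the Prokhorov step of the compactness statement, the narrow limit $\mu$ must be shown to lie in $\PP_{p}(\R^{d})$ \textemdash{} this follows from lower semicontinuity of $\nu\mapsto\int_{\R^{d}}|x-\bar{x}|^{p}\,d\nu$ under narrow convergence (Fatou/portmanteau for nonnegative lower semicontinuous integrands), and it is also what guarantees $d_{p}(\mu^{R},\mu)\to0$ as $R\to\infty$ in your triangle inequality; (ii) passing between sequential relative compactness and tightness via Prokhorov uses that the narrow topology on $\PP(\R^{d})$ is metrizable. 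Both are standard and do not affect the validity of the argument.
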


\begin{theorem}\label{compactnesscrit}
Let $r \geq p >0$ and let $\mathcal{K} \subset \PP_{p}(\R^{d})$ be such that
\begin{equation*}
\sup_{\mu \in \mathcal{K}}\int_{\R^{d}}{|x|^{r}\ \mu(dx)} < \infty.	
\end{equation*}
	Then the set $\mathcal{K}$ is tight. If, moreover, $r > p$ then $\mathcal{K}$ is relatively compact for the $d_{p}$ distance.
\end{theorem}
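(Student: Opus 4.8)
The plan is to treat the two assertions separately, obtaining tightness directly from a Chebyshev--Markov estimate and then deducing relative compactness by combining this tightness with the $p$-uniform integrability supplied by Remark~\ref{criterion}, all fed into Theorem~\ref{compactness}.

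First I would prove tightness. Set $M := \sup_{\mu \in \mathcal{K}} \int_{\R^{d}} |x|^{r}\,\mu(dx)$, which is finite by hypothesis. For any radius $R > 0$ and any $\mu \in \mathcal{K}$, Markov's inequality applied to the nonnegative function $|x|^{r}$ gives $\mu(\R^{d} \setminus \overline{B_{R}}) \le R^{-r} \int_{\R^{d}} |x|^{r}\,\mu(dx) \le M R^{-r}$. Since the right-hand side is independent of $\mu$ and tends to $0$ as $R \to \infty$ (using $r > 0$), for every $\varepsilon > 0$ one can choose $R$ so large that $\mu(\R^{d} \setminus \overline{B_{R}}) < \varepsilon$ simultaneously for all $\mu \in \mathcal{K}$. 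As $\overline{B_{R}}$ is compact, this is precisely tightness of $\mathcal{K}$.

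Next I would establish relative compactness under the strict inequality $r > p$. By Theorem~\ref{compactness}, it suffices to check that $\mathcal{K}$ is tight --- already done --- and that $\mathcal{K}$ has uniformly integrable $p$-moment. For the latter I would apply Remark~\ref{criterion} with the exponents $0 < p < p_{1} := r$ and the base point $\bar{x} = 0$: the hypothesis $\sup_{\mu \in \mathcal{K}} \int_{\R^{d}} |x|^{r}\,\mu(dx) < +\infty$ is exactly the finiteness condition required there, so $\mathcal{K}$ has uniformly integrable $p$-moment. Theorem~\ref{compactness} then yields that $\mathcal{K}$ is relatively compact for $d_{p}$.

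The argument is essentially bookkeeping on top of the compactness criterion already recorded, so there is no genuine analytic obstacle; the only point that must be handled with care is the role of the strict inequality. Tightness needs merely $r > 0$, but the passage to $d_{p}$-relative compactness genuinely requires $r > p$, since Remark~\ref{criterion} demands a strictly larger moment exponent $p_{1} > p$ in order to upgrade a bounded $r$-moment into $p$-uniform integrability; when $r = p$ the bounded $p$-moment alone does not prevent mass from escaping in the $d_{p}$-sense, which is why the second assertion is stated only under $r > p$.
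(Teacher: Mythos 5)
Your proof is correct. Note, however, that the paper does not actually prove Theorem \ref{compactnesscrit}: it is stated as a quoted result, with the proof deferred to the references (\cite[Theorem 7.1.5]{bib:AGS} and \cite[Lemma 5.7]{bib:CN}). What you have done is reconstruct the standard argument behind that citation, using exactly the two auxiliary facts the paper records immediately beforehand: the Markov--Chebyshev estimate $\mu(\R^{d}\setminus \overline{B_{R}}) \le M R^{-r}$ gives tightness uniformly over $\mathcal{K}$ (only $r>0$ is needed here), and Remark \ref{criterion} with $p_{1}=r>p$, $\bar{x}=0$ upgrades the bounded $r$-moment to uniform integrability of the $p$-moment, so that Theorem \ref{compactness} delivers relative compactness in $(\PP_{p}(\R^{d}),d_{p})$. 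Your closing observation about the necessity of the strict inequality $r>p$ is also accurate: a bounded $p$-moment alone (take $\mu_{n}=(1-\tfrac1n)\delta_{0}+\tfrac1n\delta_{x_{n}}$ with $|x_{n}|^{p}=n$) is compatible with weak-$*$ convergence while the $p$-moment fails to converge, so no $d_{p}$-convergent subsequence exists. In short, your write-up supplies a self-contained proof where the paper offers only a pointer to the literature, at the cost of resting on Theorem \ref{compactness} and Remark \ref{criterion}, which the paper likewise states without proof.
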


See, for instance, \cite[Theorem 7.1.5]{bib:AGS} and \cite[Lemma 5.7]{bib:CN}.

Let $(X_1,S_1,\mu)$ be a measure space, $(X_2,S_2)$ a measurable space, and $f:X_1\to X_2$ a measurable map. The push-forward of $\mu$ through $f$ is the measure $f \sharp \mu$ on $(X_{2}, S_{2})$ defined by 
\begin{equation*}
f \sharp \mu(B):=\mu\left(f^{-1}(B)\right), \quad \forall B \in S_2.
\end{equation*} 
\noindent
The push-forward has the property that a measurable map $g: X_{2} \to \mathbb{R}$ is integrable with respect to $f \sharp \mu$ if and only if $g \circ f$ is integrable on $X_{1}$ with respect to $\mu$. In this case, we have that 
\begin{equation*}
\int_{X_1}g(f(x))\,d\mu(x)=\int_{X_2}g(y)\,df\sharp\mu(y).
\end{equation*}

We conclude this introductory section recalling the so-called disintegration theorem.

\begin{theorem}[{\bf Disintegration Theorem}]\label{disintegration}
Let $X$ and $Y$ be Radon separable metric spaces, let $\mu$ be a Borel probability measure on $X$ and let $\pi:X \to Y $ be Borel map. Define $\nu=\pi \sharp \mu \in \PP(Y)$.  Then there exists a $\mu$-a.e. uniquely determined Borel measurable family of probability measures $\{ \nu_{y} \}_{y \in Y} \subset \PP(X)$ such that 
\begin{equation*}
\nu_{y} ( X \backslash \pi^{-1}(y))=0, \quad\ \text{for}\ \mu-\text{a.e.}\ y \in Y,
\end{equation*}
and
\begin{equation*}
\int_{X}{f(x) \mu(dx)} = \int_{Y} \left(\int_{\pi^{-1}(y)}{f(x) \nu_{y}(dx)} \right) \nu(dy)
\end{equation*}
for every Borel map $f:X \to [0, + \infty]$.
\end{theorem}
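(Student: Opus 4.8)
The plan is to build the family $\{\nu_{y}\}$ via the Radon--Nikodym theorem applied to the ``slice'' set functions, and then to upgrade the resulting $\nu$-a.e.\ defined densities into a genuine Borel family of probability measures, using separability of $X$ together with the Radon property. First I would fix a Borel set $B \subseteq X$ and consider the measure $\mu_{B}(\cdot):=\mu\big(B \cap \pi^{-1}(\cdot)\big)$ on $Y$. Since $\mu_{B}(A)\le \mu(\pi^{-1}(A))=\nu(A)$ for every $A\in\mathscr{B}(Y)$, we have $\mu_{B}\ll\nu$, so Radon--Nikodym yields a density $g_{B}=d\mu_{B}/d\nu\in L^{1}(Y,\nu)$ with $0\le g_{B}\le 1$ $\nu$-a.e.\ and $g_{X}=1$ $\nu$-a.e. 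The heuristic is $\nu_{y}(B)=g_{B}(y)$, and directly from the definitions one reads off the target identity for indicators, $\int_{X}\mathbf{1}_{B}\,\mu(dx)=\mu_{B}(Y)=\int_{Y}g_{B}(y)\,\nu(dy)$.

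The delicate step is to convert the collection $\{g_{B}\}$, each defined only up to a $\nu$-null set, into a single map $y\mapsto\nu_{y}$ that is a probability measure for $\nu$-a.e.\ $y$. Exploiting separability of $X$, I would fix a countable algebra $\mathcal{A}$ generating $\mathscr{B}(X)$ (finite unions of sets from a countable base, closed under the needed operations). Off a single $\nu$-null set $N$ — the countable union of the exceptional sets — one checks, \emph{simultaneously} for all $B,B'\in\mathcal{A}$, the relations $g_{\emptyset}=0$, $g_{X}=1$, monotonicity, and finite additivity $g_{B\cup B'}=g_{B}+g_{B'}$ for disjoint $B,B'$. Thus for $y\notin N$ the assignment $B\mapsto g_{B}(y)$ is a normalized, finitely additive set function on $\mathcal{A}$; here I would invoke the Radon (inner) regularity of $\mu$ to control the densities along increasing sequences of compact sets and thereby promote finite additivity to countable additivity, so that Carath\'eodory extension produces a genuine Borel probability measure $\nu_{y}$ with $\nu_{y}(B)=g_{B}(y)$ on $\mathcal{A}$. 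For $y\in N$ one sets $\nu_{y}$ equal to a fixed reference measure. Measurability of $y\mapsto\nu_{y}(B)$ for every Borel $B$ then follows from its validity on $\mathcal{A}$ by a monotone-class argument.

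It remains to establish concentration and the integral formula. For $A\in\mathscr{B}(Y)$ one has $\mu_{\pi^{-1}(A)}(A')=\mu(\pi^{-1}(A\cap A'))=\nu(A\cap A')$, so $g_{\pi^{-1}(A)}=\mathbf{1}_{A}$ $\nu$-a.e., i.e.\ $\nu_{y}(\pi^{-1}(A))=\mathbf{1}_{A}(y)$ for $\nu$-a.e.\ $y$. Running this over a countable base of open sets of $Y$ and discarding a further null set, separation of points gives $X\backslash\pi^{-1}(y)=\bigcup_{n:\,y\notin U_{n}}\pi^{-1}(U_{n})$, whence $\nu_{y}(X\backslash\pi^{-1}(y))=0$ for $\nu$-a.e.\ $y$ by countable subadditivity. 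The representation, already true for indicators of sets in $\mathcal{A}$, extends to simple functions by linearity and to arbitrary Borel $f:X\to[0,+\infty]$ by monotone convergence.

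Uniqueness is then immediate: if $\{\nu'_{y}\}$ is another admissible family, then $y\mapsto\nu'_{y}(B)$ is also a Radon--Nikodym density of $\mu_{B}$ against $\nu$, hence equals $g_{B}$ $\nu$-a.e.; letting $B$ range over the countable algebra $\mathcal{A}$ forces $\nu_{y}=\nu'_{y}$ for $\nu$-a.e.\ $y$. The main obstacle is exactly the passage carried out in the second step, from the ``for each $B$, for $\nu$-a.e.\ $y$'' statements to a single measure-valued map defined for $\nu$-a.e.\ $y$: without the countable generation of $\mathscr{B}(X)$ and the Radon regularity of $\mu$ one is left only with a finitely additive set function on $\mathcal{A}$, and it is precisely these two hypotheses that permit the countably additive extension.
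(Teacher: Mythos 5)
Your proposal is correct, but note that the paper does not actually prove this statement: it is recalled as a preliminary and dispatched with a citation to Ambrosio--Gigli--Savar\'e [Theorem 5.3.1], so any honest proof is ``a different route'' from the paper's. What you have written is the classical construction of a disintegration (equivalently, of regular conditional probabilities), and it is essentially the argument behind the cited reference: Radon--Nikodym densities $g_{B}=d\mu_{B}/d\nu$ for the slice measures $\mu_{B}(\cdot)=\mu\bigl(B\cap\pi^{-1}(\cdot)\bigr)$, a single exceptional null set obtained by restricting to a countable generating algebra $\mathcal{A}$ (this is where separability of $X$ enters), promotion of finite to countable additivity via inner regularity of $\mu$ (this is where the Radon hypothesis enters), and then concentration, the integral formula, and uniqueness by monotone-class and $\pi$-system arguments. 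You also correctly read ``$\mu$-a.e.\ $y\in Y$'' in the statement as ``$\nu$-a.e.'', which is what is meant. The one step you compress, and rightly flag as the crux, is the passage from a finitely additive set function on $\mathcal{A}$ to a countably additive one: to execute it one fixes, for each of the countably many $B\in\mathcal{A}$, an increasing sequence of compacts $K_{B,k}\subset B$ with $\mu(B\setminus K_{B,k})\to 0$, deduces $g_{K_{B,k}}\uparrow g_{B}$ $\nu$-a.e.\ from $\int_{Y}(g_{B}-g_{K_{B,k}})\,d\nu=\mu(B\setminus K_{B,k})$ together with a.e.\ monotonicity, and then runs the compact-class (finite intersection property) criterion of Marczewski/Neveu to get continuity at $\emptyset$ on $\mathcal{A}$, whence Carath\'eodory extension applies; the finite intersections of the chosen compacts must be handled through approximating sets of $\mathcal{A}$, which is standard but not free. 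With that step filled in, your outline is a complete and correct proof, and it has the merit of making explicit exactly which of the two hypotheses (separable, Radon) is responsible for which part of the construction --- something the paper's bare citation hides.
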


See, for instance, \cite[Theorem 5.3.1]{bib:AGS}.


\subsection{Control Theory}

 \begin{definition}[{\bf Strict Tonelli Lagrangians}]\label{def2}
A $C^{2}$ function $L: \mathbb{R}^{n} \times \mathbb{R}^{n} \to \mathbb{R}$ is called a {\it strict Tonelli Lagrangian} if there exist positive constants $C_{i}$ ($i=1,2,3$) such that, for all $(x,v) \in \mathbb{R}^{n} \times \mathbb{R}^{n}$:
\begin{itemize}
\item[(a)] $\frac{I}{C_{1}} \leq D_{vv}^{2}L(x,v) \leq C_{1} I$, where $I$ is the identity matrix;
	\item[(b)] $\|D^{2}_{vx}L(x,v)\| \leq C_{2}(1+|v|)$;
	\item[(c)] $|L(x,0)|+|D_{x}L(x,0)|+ |D_{v}L(x,0)| \leq C_{3}$.  
	 \end{itemize}
\end{definition}

\medskip

Let $L$ be a strict Tonelli Lagrangian and, let $f: \R^{d} \times \R^{k} \to \R^{d}$ and $g:\R^{d} \to \R$ be real functions such that
\begin{itemize}
\item[({\bf f})] for any $u \in \R^{k}$, the map $x \mapsto f(x,u)$ belongs to $W^{1,\infty}(\R^{d}; \R^{d})$ and the gradient $D_{x} f$ exists and is continuous; in addition, there exists a real positive constant $k$ such that $\| D_{x}f(x_{1},u)-D_{x}f(x_{2},u) \| \leq k |x_{1}-x_{2}|$ for all $x_{1}$, $x_{2} \in \R^{d}$ and $u \in \R^{k}$.
\item[({\bf g})] $g \in C^{1}(\R^{d}; \R)$.
\end{itemize}

Define the following optimal control problem
\begin{align*}
\tag{{\bf OC}}
\begin{cases}
& \text{Minimize}\ J(x,u)=g(\gamma(T))+\int_{0}^{T}{L(t, \gamma(t), u(t))\ dt},
\\
&\text{subject to the controlled dynamics}\ \dot\gamma(t)=f(\gamma(t), u(t)), \quad t \in [0,T],
\\
&\text{with constraints}\ \gamma(0)=x
\\
&u(t) \in  \R^{k}, \quad t \in [0,T].
\end{cases}
\end{align*}

Given the optimal control problem {\bf (OC)}, the value function is defined as follows
\begin{equation*}
V(t,x)=\inf_{\substack{u:[0,T] \to \R^{k} \\ \text{measurable}}}\left\{\int_{t}^{T}{L(t, \gamma(t), u(t))\ dt}+g(\gamma(T)) \right\}
\end{equation*}
for every $(t,x) \in [0,T] \times \R^{d}$.
We recall that the value function $V$ satisfies the dynamic programming principle, i.e. for any $(t,x) \in (0,T) \times \R^d$ and any given $s \in (t,T)$ we have that
\begin{equation}\label{DPP}
V(t,x)=\inf_{u: [t,s] \to \R^{k}} \left\{V(s, \gamma(s))+\int_{t}^{s}{L(\tau, \gamma(\tau), u(\tau))\ d\tau} \right\},	
\end{equation}
where $\gamma$ is a solution of the controlled dynamics associated with $u$.

Define the pseudo-Hamiltonian function and the Hamiltonian function as follows:
\begin{align*}
\mathcal{H}(t,x,u,p)=& -\big\langle p, f(x,u) \big\rangle -L(t,x,u), \quad \forall\ (t,x,u,p) \in [0,T] \times \R^{d} \times \R^{k} \times \R^{d}
\\
H(t,x,p)=& \sup_{ u \in \R^{k}} \mathcal{H}(t,x,u,p), \quad \forall\ (t,x,p) \in [0,T] \times \R^{d} \times \R^{d}.
\end{align*}

\begin{theorem}[{\bf Pontryagin maximum principle}]\label{maxprinciple}
Let $L$ be a strict Tonelli Lagrangian. Assume {\bf (f)} and {\bf (g)}. Given $(t,x) \in [0,T] \times \R^{d}$, let $u^{*}:[t,T] \to \R^{k}$ an optimal control for problem {\bf (OC)} with initial point $(t,x)$ and let $\gamma^{*}$ be the corresponding optimal trajectory. Then, there exists an absolutely continuous arc $p: [0,T] \to \R^{d}$ satisfying the following:
\begin{itemize}
\item[(i)] transversality condition: $-p(T)= \nabla g(\gamma^{*}(T))$;
\item[(ii)] the adjoint equation: $\dot p(t)= D_{x} \mathcal{H}(t, \gamma^{*}(t), u^{*}(t), p(t))$ for almost every $t \in [0,T]$;
\item[(iii)] maximum condition: $\mathcal{H}(t, \gamma^{*}(t), u^{*}(t), p(t))=H(t, \gamma^{*}(t), p(t))$.
\end{itemize}
\end{theorem}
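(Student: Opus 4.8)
The plan is to prove the maximum principle by the classical method of \emph{needle (spike) variations} of the optimal control, combined with the linearized state equation and an adjoint (costate) variable. The hypotheses enter as follows: assumption \textbf{(f)} makes $x\mapsto f(x,u)$ Lipschitz with a continuous, $x$-Lipschitz differential $D_{x}f$, so that both the state equation and its linearization are well posed and admit Gronwall-type estimates; the strict Tonelli structure of $L$ supplies uniform convexity and coercivity in the control variable, which forces any optimal control to be bounded and hence $s\mapsto D_{x}f(\gamma^{*}(s),u^{*}(s))$ and $s\mapsto D_{x}L(s,\gamma^{*}(s),u^{*}(s))$ to be integrable; and \textbf{(g)} provides the terminal gradient $\nabla g$. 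First I would fix a Lebesgue point $\tau\in(t,T)$ of the map $s\mapsto\big(f(\gamma^{*}(s),u^{*}(s)),\,L(s,\gamma^{*}(s),u^{*}(s))\big)$ and an arbitrary value $w\in\R^{k}$, and for small $\varepsilon>0$ define the perturbed control $u_{\varepsilon}$ equal to $w$ on $[\tau-\varepsilon,\tau]$ and equal to $u^{*}$ elsewhere.

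Denoting by $\gamma_{\varepsilon}$ the trajectory generated by $u_{\varepsilon}$ from $(t,x)$, the second step is to observe that $\gamma_{\varepsilon}=\gamma^{*}$ on $[t,\tau-\varepsilon]$ and that the displacement produced on the needle interval is, to first order,
\begin{equation*}
\gamma_{\varepsilon}(\tau)-\gamma^{*}(\tau)=\varepsilon\,v_{\tau}+o(\varepsilon),\qquad v_{\tau}:=f(\gamma^{*}(\tau),w)-f(\gamma^{*}(\tau),u^{*}(\tau)),
\end{equation*}
which is where the Lebesgue-point choice is used. For $s>\tau$ both controls coincide, so the rescaled difference $z(s):=\lim_{\varepsilon\to0^{+}}\varepsilon^{-1}\big(\gamma_{\varepsilon}(s)-\gamma^{*}(s)\big)$ solves the variational equation
\begin{equation*}
\dot z(s)=D_{x}f(\gamma^{*}(s),u^{*}(s))\,z(s),\qquad s\in[\tau,T],\qquad z(\tau)=v_{\tau},
\end{equation*}
whose well-posedness rests on the boundedness of $D_{x}f$ along $\gamma^{*}$ from \textbf{(f)}. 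Expanding the cost to first order and using optimality $J(x,u_{\varepsilon})\ge J(x,u^{*})$, I would reach the fundamental inequality
\begin{equation*}
\big\langle\nabla g(\gamma^{*}(T)),z(T)\big\rangle+\int_{\tau}^{T}\big\langle D_{x}L(s,\gamma^{*}(s),u^{*}(s)),z(s)\big\rangle\,ds+\big[L(\tau,\gamma^{*}(\tau),w)-L(\tau,\gamma^{*}(\tau),u^{*}(\tau))\big]\ge0.
\end{equation*}

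Finally I would introduce the costate $p$ as the unique absolutely continuous solution of the adjoint equation (ii) with the terminal condition (i), and verify the bilinear identity
\begin{equation*}
\frac{d}{ds}\big\langle p(s),z(s)\big\rangle=-\big\langle D_{x}L(s,\gamma^{*}(s),u^{*}(s)),z(s)\big\rangle,
\end{equation*}
which follows at once from the variational equation together with $\dot p=D_{x}\mathcal{H}=-(D_{x}f)^{\top}p-D_{x}L$. Integrating this identity over $[\tau,T]$ converts the running-cost integral in the fundamental inequality into the boundary terms $\langle p(\tau),v_{\tau}\rangle-\langle p(T),z(T)\rangle$; the transversality condition (i) is designed precisely so that the terminal boundary term cancels the $\langle\nabla g(\gamma^{*}(T)),z(T)\rangle$ contribution, leaving exactly
\begin{equation*}
\big\langle p(\tau),f(\gamma^{*}(\tau),w)-f(\gamma^{*}(\tau),u^{*}(\tau))\big\rangle+\big[L(\tau,\gamma^{*}(\tau),w)-L(\tau,\gamma^{*}(\tau),u^{*}(\tau))\big]\ge0,
\end{equation*}
that is, $\mathcal{H}(\tau,\gamma^{*}(\tau),u^{*}(\tau),p(\tau))\ge\mathcal{H}(\tau,\gamma^{*}(\tau),w,p(\tau))$. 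Since $\tau$ ranges over a set of full measure and $w\in\R^{k}$ is arbitrary, this is precisely the maximum condition (iii), while (i) and (ii) hold by construction of $p$. I expect the main obstacle to be the rigorous justification of the first-order expansions — namely the uniform $O(\varepsilon)$ control of $\gamma_{\varepsilon}-\gamma^{*}$ on all of $[\tau,T]$ via Gronwall's inequality and the passage to the limit in the cost, both of which rely on the Lipschitz and growth bounds from \textbf{(f)} and the Tonelli structure of $L$ — together with careful bookkeeping of the sign conventions in (i)--(iii) so that the boundary terms cancel as claimed.
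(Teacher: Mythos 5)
The paper never proves Theorem \ref{maxprinciple}; it simply quotes it from \cite[Theorem 7.4.17]{bib:SC}. Your needle-variation argument is therefore the natural classical proof to supply, and its machinery is essentially correct: the spike perturbation at a Lebesgue point, the variational equation $\dot z=D_{x}f(\gamma^{*},u^{*})z$, the first-order expansion of the cost, and the pairing identity $\frac{d}{ds}\langle p(s),z(s)\rangle=-\langle D_{x}L(s,\gamma^{*}(s),u^{*}(s)),z(s)\rangle$ are exactly the right ingredients, and the technical points you flag (essential boundedness of optimal controls from the Tonelli structure, Gronwall estimates from \textbf{(f)}) are genuine but standard.

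There is, however, a concrete problem in your final step, and it is not mere bookkeeping: the cancellation you claim is incompatible with condition (i) as stated. Integrating your pairing identity gives
\[
\int_{\tau}^{T}\langle D_{x}L,z\rangle\,ds=\langle p(\tau),v_{\tau}\rangle-\langle p(T),z(T)\rangle,
\]
so the fundamental inequality becomes
\[
\big\langle \nabla g(\gamma^{*}(T)),z(T)\big\rangle-\big\langle p(T),z(T)\big\rangle+\big\langle p(\tau),v_{\tau}\big\rangle+\big[L(\tau,\gamma^{*}(\tau),w)-L(\tau,\gamma^{*}(\tau),u^{*}(\tau))\big]\ge 0.
\]
The boundary terms cancel if and only if $p(T)=+\nabla g(\gamma^{*}(T))$; under (i) as written, i.e. $p(T)=-\nabla g(\gamma^{*}(T))$, they add up to $2\langle\nabla g(\gamma^{*}(T)),z(T)\rangle$, which has no definite sign, and the maximum condition does not follow. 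In other words, your argument proves the maximum principle with transversality $p(T)=\nabla g(\gamma^{*}(T))$. That is in fact the correct statement: with the pseudo-Hamiltonian $\mathcal{H}=-\langle p,f\rangle-L$, conditions (i)--(iii) as printed are mutually inconsistent (the adjoint equation is affine in $p$, so replacing $p$ by $-p$ does not convert one convention into the other; a one-dimensional example with $f=u$, $L=|u|^{2}/2$ shows the arc satisfying (ii)--(iii) must end at $+\nabla g$). The paper itself silently uses the plus sign later, in \eqref{trasv} and in \eqref{dualarcseq}, so the minus sign in (i) is a typo in the statement. You have effectively proven the corrected theorem, but the assertion that the boundary terms ``cancel as claimed'' under (i) is exactly the step that fails as written; a careful proof should either flag the inconsistency or redefine the dual arc accordingly. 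A minor further point: the null set of admissible times $\tau$ depends on $w$, so to obtain (iii) for almost every $t$ you should run the argument over a countable dense set of $w\in\R^{k}$ and conclude by continuity of $\mathcal{H}$ in $u$.
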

See, for instance, \cite[Theorem 7.4.17]{bib:SC}.

Observe that the adjoint equation ($iii$) could be also written  in the following way
\begin{equation*}
-\dot p(t)= D_{x}f(\gamma^{*}(t), u^{*}(t))^{*} p(t)\footnote{Here $D_{x}f^{*}$ denotes the tranpose of $D_{x}f$.}+D_{x}L(t, \gamma^{*}(t), u^{*}(t)).
\end{equation*}

As usual, one can write the maximum principle in form of Hamiltonian system as follows.

\begin{theorem}\label{Hmaxprinciple}
Let $L$ be a strict Tonelli Lagrangian and assume {\bf (f)} and {\bf (g)}. Let $u^{*}$ be an optimal control of the problem {\bf (OC)} and let $\gamma^{*}$ be the associated minimizing curve. Let $p$ be the dual arc given by Theorem \ref{maxprinciple}. Then, the pair $(\gamma^{*}, p)$ solves the system
\begin{align*}
\begin{cases}
\dot\gamma^{*}(t)=-D_{p}H(t, \gamma^{*}(t), p(t)),
\\
\dot p(t)= D_{x}H(t, \gamma^{*}(t), p(t)).
\end{cases}
\end{align*}
Consequently, we have that $\gamma^{*}$ and $p$ belong to $C^{2}([0,T])$.
\end{theorem}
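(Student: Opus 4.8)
The plan is to read the Hamiltonian system off the Pontryagin maximum principle (Theorem \ref{maxprinciple}) by an envelope-type computation that exploits the strict convexity of $L$ in the control variable, and then to upgrade the regularity of the pair $(\gamma^*,p)$ by a bootstrap argument on the resulting ordinary differential system.

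First I would use the maximum condition (iii) of Theorem \ref{maxprinciple}. Since the supremum defining $H$ is taken over the open set $\R^k$ and is attained at $u^*(t)$, the interior first-order optimality condition gives
\begin{equation*}
D_u\mathcal{H}\big(t,\gamma^*(t),u^*(t),p(t)\big)=0 \quad\text{for a.e. } t\in[0,T].
\end{equation*}
The uniform strict convexity of $u\mapsto L(t,x,u)$ guaranteed by condition (a) of Definition \ref{def2} forces this critical point to be unique, and the implicit function theorem applied to $D_u\mathcal{H}=0$ yields a map $\bar u=\bar u(t,x,p)$ of class $C^1$ satisfying $\bar u(t,\gamma^*(t),p(t))=u^*(t)$ for a.e. $t$. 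Consequently $H(t,x,p)=\mathcal{H}(t,x,\bar u(t,x,p),p)$ is of class $C^2$ in $(x,p)$.

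Next I would invoke the envelope identity. Differentiating $H(t,x,p)=\mathcal{H}(t,x,\bar u(t,x,p),p)$ and using $D_u\mathcal{H}(t,x,\bar u,p)=0$, the contributions of $D_x\bar u$ and $D_p\bar u$ cancel, leaving
\begin{equation*}
D_xH(t,x,p)=D_x\mathcal{H}(t,x,\bar u,p), \qquad D_pH(t,x,p)=D_p\mathcal{H}(t,x,\bar u,p).
\end{equation*}
Since $D_p\mathcal{H}(t,x,u,p)=-f(x,u)$, evaluating along the optimal pair turns the state equation $\dot\gamma^*=f(\gamma^*,u^*)$ into $\dot\gamma^*=-D_p\mathcal{H}(t,\gamma^*,u^*,p)=-D_pH(t,\gamma^*,p)$; likewise the adjoint equation (ii), $\dot p=D_x\mathcal{H}(t,\gamma^*,u^*,p)$, becomes $\dot p=D_xH(t,\gamma^*,p)$. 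This is exactly the Hamiltonian system in the statement.

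Finally, for the $C^2$ conclusion I would bootstrap on this system. As $\bar u$ is continuous and $\gamma^*,p$ are (absolutely) continuous, $u^*$ admits a continuous representative, so the right-hand sides $-D_pH(t,\gamma^*,p)$ and $D_xH(t,\gamma^*,p)$ are continuous in $t$ and $(\gamma^*,p)\in C^1$; feeding this back, the right-hand sides become $C^1$ in $t$, whence $(\gamma^*,p)\in C^2([0,T])$ and, in turn, $u^*\in C^1$. I expect the main obstacle to be the rigorous justification of the smoothness of $\bar u$ and $H$: one must verify that the Hessian
\begin{equation*}
D^2_{uu}\mathcal{H}(t,x,u,p)=-D^2_{uu}L(t,x,u)-\langle p,D^2_{uu}f(x,u)\rangle
\end{equation*}
is invertible along the optimal trajectory, so that the implicit function theorem applies. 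For the linear dynamics $f(x,u)=Ax+Bu$ that are the focus of the paper the curvature term vanishes and this reduces to $-D^2_{uu}L$, negative definite by condition (a); the general case would additionally require controlling the contribution of the adjoint state $p$.
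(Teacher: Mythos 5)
The paper offers no proof of this theorem: it is stated as the classical Hamiltonian form of the Pontryagin maximum principle (``As usual, one can write the maximum principle in form of Hamiltonian system\dots''), resting on the same reference to Cannarsa--Sinestrari that backs Theorem \ref{maxprinciple}. Your proposal reconstructs the standard argument hidden behind that citation, and it is correct in the setting the paper actually uses, namely $f(x,u)=Ax+Bu$: the maximum condition on the open set $\R^{k}$ gives $D_{u}\mathcal{H}(t,\gamma^{*}(t),u^{*}(t),p(t))=0$ a.e.; since $u\mapsto\mathcal{H}(t,x,u,p)=-\langle p,Ax+Bu\rangle-L(t,x,u)$ is strictly concave with $D^{2}_{uu}\mathcal{H}=-D^{2}_{uu}L\leq -I/C_{1}$, the equation $D_{u}L(t,x,u)=-B^{*}p$ has a unique solution $\bar u(t,x,p)$, which is $C^{1}$ by the implicit function theorem and is automatically the global maximizer (a critical point of a concave function is a global maximum --- this one-line remark settles the attainment of the supremum, which your write-up leaves implicit and which replaces any coercivity discussion); the envelope identities and the identification of the Hamiltonian system then go through exactly as you state, as does the bootstrap. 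Two caveats, both of which are really imprecisions of the paper's statement rather than gaps in your argument: first, as you yourself note, under assumption ({\bf f}) alone $f$ need not even be differentiable in $u$, so the theorem in its stated generality tacitly requires dynamics smooth in the control; second, the final upgrade to $C^{2}([0,T])$ requires $D_{x}H$ and $D_{p}H$ to be $C^{1}$ in $t$ as well as in $(x,p)$, which holds for the autonomous strict Tonelli Lagrangian of Definition \ref{def2} but fails when the time dependence enters only through a merely H\"older-continuous flow $t\mapsto m_{t}$, as in the paper's Mean Field Games application, where this bootstrap stops at $C^{1}$ regularity of $(\gamma^{*},p)$.
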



\section{Setting of the Mean Field Games problem}

\subsection{Assumptions}
Throughout this paper we will assume that the Lagrangian $L:\R^{d} \times \R^{k} \times \PP_{1}(\R^{d}) \to \R$ and the function $G: \R^{d} \times \PP_{1}(\R^{d}) \to \R$ satisfy the following: 
\begin{itemize}
\item[{\bf (L1)}] For any $m \in \PP_{1}(\R^{d})$, the map $(x,u) \mapsto L(x, u, m)$ is of class $C^{2}(\mathbb{R}^{d} \times \mathbb{R}^{k})$ and the map $m \mapsto L(x,u,m),$ from $\PP_{1}(\R^{d})$ to $\R$, is Lipschitz continuous with respect to the $d_{1}$ distance, i.e.
\begin{equation*}
Q_{L}:= \displaystyle{\sup_{\substack{(x,u) \in \R^{d} \times \R^{k} \\ m_{1},\ m_{2} \in \PP_{1}(\R^{d}) \\ m_{1} \not= m_{2}}}} \frac{|L(x,u,m_{1})-L(x,u,m_{2})|}{d_{1}(m_{1}, m_{2})} < +\infty.
\end{equation*}
\item[{\bf (L2)}] The map $(x,m) \mapsto G(x,m)$ is of class $C_{b}(\mathbb{R}^{d} \times \mathcal{P}_{1}(\mathbb{R}^{d}))$ and for every $m \in \PP_{1}(\R^{d})$ the map $x \to G(x,m)$ belongs to $C^{1}_{b}(\R^{d})$.
\item[{\bf (L3)}] 
\begin{itemize}
\item[($i$)] There exist a constant $C_{0}$ such that
\begin{align*}
\frac{\text{Id}}{C_{0}} \leq D_{uu}L(x,u,m) \leq C_{0}\text{Id}, \quad \forall\ (x,u,m) \in \R^{d} \times \R^{k} \times \PP_{1}(\R^{d});
\end{align*}
\item[($ii$)] there exists a constant $C_{1} \geq 0$ such that for any $(x,u,m) \in \R^{d} \times \R^{k} \times \PP_{1}(\R^{d})$, it holds $\| D^{2}_{xu}L(x,u,m) \| \leq C_{1}(1+|u|)$;
\item[($iii$)] there exists a constant $C_{2} \geq 0$ such that for any $(x,u,m) \in \R^{d} \times \R^{k} \times \PP_{1}(\R^{d})$
\begin{align*}
|L(x,0,m)|+|D_{x}L(x,0,m)|+|D_{u}(x,0,m)| \leq C_{2};
\end{align*}
\end{itemize}
\end{itemize}

\begin{remark}\label{assumptions}
\em
Note that, in hypothesis {\bf (L3)}, we are assuming that the Lagrangian $L$ is a strict Tonelli Lagrangian, see Definition \ref{def2}, uniformly with respect the measure variable. Moreover, if $L$ satisfies assumptions {\bf (L3)} $(i)$--$(iii)$, then it is not difficult to check that there exist constants $c_{0}$ and $c_{1}$ such that
\begin{align*}
c_{0}|u|^{2}-c_{1} \leq L(x,u,m) \leq c_{1}+\frac{1}{c_{0}}|u|^{2} \quad \forall\ (x,u,m) \in \R^{d} \times \R^{k} \times \PP_{1}(\R^{d}).
\end{align*}
\end{remark}

\medskip
Fix a time horizon $T > 0$. Let $A$ and $B$ be real matrices, $d \times d$ and $d \times k$, respectively. 

Consider the control system defined by
\begin{equation}\label{dyn}
\dot\gamma(t)=A\gamma(t)+Bu(t), \quad t \in [0,T]
\end{equation}
where $u:[0,T] \to \mathbb{R}^{k}$ is a summable function. For all $x \in \R^{d}$ we denote by $\gamma(\cdot\ ; x,u)$ the solution of the differential equation \eqref{dyn} such that $\gamma(0)=x$ and define the metric space 
\begin{align*}
\Gamma_{T}=\Big\{\gamma(\cdot;x,u): x \in \R^{d},\ u \in L^{1}(0,T; \R^{k})  \Big\} \subset \text{AC}([0,T]; \R^{d})
\end{align*}
endowed with the uniform norm, denoted by $\| \cdot \|_{\infty}$. Moreover, define 
\begin{equation*}
\Gamma_{T}(x)=\Big\{\gamma \in \Gamma_{T}: \gamma(0)=x \Big\}.
\end{equation*}

For any $x \in \mathbb{R}^{d}$, any $u \in L^{1}(0,T)$ and any family of Borel probability measures $\{ m_{t}\}_{t \in [0,T]}$ depending continuously on $t$ define the functional 
\begin{align*}
J(x, u, \{m_{t}\}_{t})=\int_{0}^{T}{L(\gamma(t,x,u), u(t), m_{t})\ dt} + G(\gamma(T,x,u), m_{t}),
\end{align*}
and the associated optimal control problem
\begin{equation}\label{min}
\inf_{u \in L^{2}(0,T;\ \mathbb{R}^{k})} J(x, u, \{m_{t}\}_{t}).
\end{equation}
Notice that the restriction to controls $u \in L^{2}(0,T; \R^{k})$ is due to the structure assumptions we imposed on $L$. 
\begin{proposition}\label{bound}
There exists a real positive constant $K$ such that for any $x \in \R^{d}$, any $\{ m_{t}\}_{t \in [0,T]} \subset \PP(\R^{d})$ and any optimal control $u^{*}$ of \eqref{min}, we have that 
    $$\|u^{*} \|_{2} \leq K.
    $$ 
\end{proposition}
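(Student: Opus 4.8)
The plan is to exploit the quadratic coercivity of $L$ together with a comparison of $u^{*}$ against the trivial control $u \equiv 0$. Since the required bound must be uniform in both the initial point $x$ and the flow $\{m_{t}\}_{t}$, the whole point is to produce estimates that do not see these parameters explicitly. Assumptions {\bf (L2)} and {\bf (L3)}$(iii)$ furnish exactly this, since they bound $G$ and $L(\cdot, 0, \cdot)$ uniformly over the state and the measure variables.

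First I would record the two-sided quadratic bound from Remark \ref{assumptions}, namely $c_{0}|u|^{2}-c_{1} \leq L(x,u,m) \leq c_{1}+c_{0}^{-1}|u|^{2}$ for all $(x,u,m)$. The left-hand inequality gives a lower bound on the cost incurred by $u^{*}$: writing $\gamma^{*}$ for the trajectory driven by $u^{*}$ and using that $G \in C_{b}$ by {\bf (L2)}, so that $|G| \leq \|G\|_{\infty}$, one obtains
\[
J(x,u^{*},\{m_{t}\}_{t}) \geq \int_{0}^{T}\big(c_{0}|u^{*}(t)|^{2}-c_{1}\big)\,dt - \|G\|_{\infty} = c_{0}\|u^{*}\|_{2}^{2} - c_{1}T - \|G\|_{\infty}.
\]

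Next I would estimate the cost of the competitor $u \equiv 0$. The associated trajectory is $\gamma(t)=e^{At}x$, and by {\bf (L3)}$(iii)$ we have $|L(e^{At}x,0,m_{t})| \leq C_{2}$ for every $t \in [0,T]$, whence
\[
J(x,0,\{m_{t}\}_{t}) \leq \int_{0}^{T} C_{2}\,dt + \|G\|_{\infty} = C_{2}T + \|G\|_{\infty}.
\]
The crucial observation is that this upper bound depends neither on $x$ nor on $\{m_{t}\}_{t}$, precisely because the bounds in {\bf (L3)}$(iii)$ and {\bf (L2)} are uniform in those arguments.

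Finally, optimality of $u^{*}$ yields $J(x,u^{*},\{m_{t}\}_{t}) \leq J(x,0,\{m_{t}\}_{t})$. Chaining the three inequalities and solving for $\|u^{*}\|_{2}$ gives
\[
\|u^{*}\|_{2}^{2} \leq \frac{(C_{2}+c_{1})T + 2\|G\|_{\infty}}{c_{0}} =: K^{2},
\]
which is the claimed uniform bound. There is no serious obstacle here: the only point needing attention is confirming that the competitor estimate is genuinely parameter-free, and this is guaranteed by the uniformity built into {\bf (L3)}$(iii)$ and the boundedness of $G$.
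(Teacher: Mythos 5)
Your proof is correct and follows essentially the same route as the paper: compare the optimal control against the zero control, bound the cost of $u^{*}$ from below via the coercivity in Remark \ref{assumptions}, bound the cost of $u\equiv 0$ from above uniformly in $x$ and $\{m_{t}\}_{t}$, and conclude by optimality. The only cosmetic difference is that you invoke {\bf (L3)}$(iii)$ directly for the competitor estimate (constant $C_{2}$) where the paper uses the upper bound of Remark \ref{assumptions} (constant $c_{1}$), which changes nothing of substance.
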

\begin{proof}
By Remark \ref{assumptions} and the optimality of $u^{*}$ we have that 
\begin{equation*}
 c_{1}T + \|G\|_{\infty} \geq J_{\eta}(x, 0) \geq J_{\eta}(x, u^*) \geq c_{0} \int_{0}^{T}{|u^{*}(t)|^{2}dt} - c_{1}T - \|G\|_{\infty}.
\end{equation*}
Therefore, from the above inequalities we deduce that
\begin{equation*}
	\| u^{*} \|^{2}_{2}=\int_{0}^{T}{|u^{*}(t)|^{2}dt} \leq \frac{2}{c_{0}}\left(c_{1}T + \|G\|_{\infty}\right)=:K^{2}.
\end{equation*}
Thus, the proof is complete.
\end{proof}
\begin{corollary}\label{infboundmin}
For any $x \in \R^{d}$, let $u^{*}$ be a solution of \eqref{min} and let $\gamma^{*}(\cdot)=\gamma(\cdot\ ;x,u^{*})$. Then, there exists a constant $\tilde{C} \geq 0$ such that
\begin{equation*}
\| \gamma^{*} \|_{\infty} \leq \tilde{C_{1}}(1+|x|).
\end{equation*}
\end{corollary}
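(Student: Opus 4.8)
The plan is to exploit the linearity of the dynamics \eqref{dyn} together with the uniform $L^{2}$ bound on the optimal control furnished by Proposition \ref{bound}. Since $\gamma^{*}$ solves $\dot\gamma^{*}(t)=A\gamma^{*}(t)+Bu^{*}(t)$ with $\gamma^{*}(0)=x$, I would first write it in integral (Duhamel) form
\begin{equation*}
\gamma^{*}(t)=x+\int_{0}^{t}\big(A\gamma^{*}(s)+Bu^{*}(s)\big)\,ds,
\end{equation*}
and take norms, using submultiplicativity of the operator norm, to obtain
\begin{equation*}
|\gamma^{*}(t)| \leq |x| + \|A\|\int_{0}^{t}|\gamma^{*}(s)|\,ds + \|B\|\int_{0}^{t}|u^{*}(s)|\,ds.
\end{equation*}

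The key step is to control the last integral uniformly in $x$. By the Cauchy--Schwarz inequality and Proposition \ref{bound},
\begin{equation*}
\int_{0}^{T}|u^{*}(s)|\,ds \leq \sqrt{T}\,\|u^{*}\|_{2}\leq \sqrt{T}\,K,
\end{equation*}
so the control contributes only a constant that is independent of both $x$ and the flow $\{m_{t}\}_{t}$. Substituting this estimate gives
\begin{equation*}
|\gamma^{*}(t)| \leq |x| + \|B\|\sqrt{T}\,K + \|A\|\int_{0}^{t}|\gamma^{*}(s)|\,ds .
\end{equation*}

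At this point Gronwall's inequality yields, for all $t\in[0,T]$,
\begin{equation*}
|\gamma^{*}(t)| \leq \big(|x|+\|B\|\sqrt{T}\,K\big)\,e^{\|A\|t} \leq e^{\|A\|T}\big(|x|+\|B\|\sqrt{T}\,K\big),
\end{equation*}
and taking the supremum over $t$ and setting $\tilde{C}_{1}:=e^{\|A\|T}\big(1+\|B\|\sqrt{T}\,K\big)$ gives the claimed bound $\|\gamma^{*}\|_{\infty}\leq \tilde{C}_{1}(1+|x|)$. Equivalently, one can bypass Gronwall altogether by combining the variation-of-constants formula $\gamma^{*}(t)=e^{tA}x+\int_{0}^{t}e^{(t-s)A}Bu^{*}(s)\,ds$ with the elementary bound $\|e^{tA}\|\leq e^{\|A\|T}$ valid on $[0,T]$. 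There is no genuine obstacle here: the only point worth isolating is that the estimate of Proposition \ref{bound} is uniform with respect to $x$ and to the measure flow, so the resulting constant $\tilde{C}_{1}$ depends only on $A$, $B$, $T$ and the structural constants entering $K$, and not on the particular initial datum or equilibrium under consideration.
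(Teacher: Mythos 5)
Your proof is correct. Its core ingredients are exactly those of the paper's proof: the uniform $L^{2}$ bound $\|u^{*}\|_{2}\leq K$ from Proposition \ref{bound}, the Cauchy--Schwarz (H\"older) estimate $\int_{0}^{T}|u^{*}(s)|\,ds\leq \sqrt{T}\,K$, and exponential growth in $\|A\|$. The only difference is how the linear dynamics is handled: the paper starts directly from the variation-of-constants formula $\gamma^{*}(t)=e^{tA}x+\int_{0}^{t}e^{(t-s)A}Bu^{*}(s)\,ds$, bounds $\|e^{tA}\|\leq e^{T\|A\|}$, and concludes in three lines --- this is precisely the ``bypass'' you describe at the end of your proposal. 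Your primary route instead takes norms in the raw integral equation and closes the estimate with Gronwall's inequality. The two arguments produce the same constant structure $e^{\|A\|T}\bigl(1+\|B\|\sqrt{T}\,K\bigr)$; what the Gronwall version buys is robustness (it would survive verbatim if the linear drift $A\gamma$ were replaced by any Lipschitz vector field with linear growth), while the paper's version buys brevity and an explicit solution representation, which it reuses later (e.g.\ in Lemma \ref{closedlemma} and Theorem \ref{Vsemiconcavity}). Your closing observation --- that $\tilde{C}_{1}$ is independent of the initial datum and of the measure flow because $K$ is --- is indeed the point that makes the corollary useful downstream, even though the paper leaves it implicit.
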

\begin{proof}
Since $\gamma^{*}$ is a solution of \eqref{dyn} associated with $u^{*}$, we know that
\begin{equation*}
\gamma^{*}(t)=e^{tA}x+\int_{0}^{t}{e^{(t-s)A}Bu^{*}(s)\ ds}.
\end{equation*}
Thus, we have that
\begin{equation*}
|\gamma^{*}(t)| \leq e^{T\| A \|}\left( |x| + \| B \| \int_{0}^{t}{|u^{*}(s)|\ ds}\right)
\end{equation*}
and by H\"older's inequality
\begin{equation*}
|\gamma^{*}(t)| \leq e^{T\| A \|}\left( |x| + \| B \| T^{\frac{1}{2}}\| u^{*} \|_{2}\right).
\end{equation*}
Thus, the proof is complete.
\end{proof}
\begin{lemma}\label{equicontinuity}
	Let $u^{*} \in L^{2}$ be an optimal control and $\gamma^{*} \in \Gamma^{*}(x)$ be an optimal path for $x \in \R^{d}$. Then, there exists a constant $\tilde{C}_{2}>0$ such that
	\begin{equation*}
		\| \dot\gamma^{*}\|_{2} \leq \tilde{C}_{2}(1+|x|).
	\end{equation*}
Moreover, the family of minimizing path $\Gamma^{*}(x)$ is uniformly H\"older continuous.
\end{lemma}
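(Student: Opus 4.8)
The plan is to combine the $L^2$ bound on the optimal control from Proposition \ref{bound} with the sup bound on the optimal trajectory from Corollary \ref{infboundmin}, and then to deduce the H\"older continuity directly from the $L^2$ control of the derivative via the Cauchy--Schwarz inequality.

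First I would estimate $\| \dot\gamma^{*} \|_{2}$ by using the dynamics \eqref{dyn}. Since $\dot\gamma^{*}(t)=A\gamma^{*}(t)+Bu^{*}(t)$, the triangle inequality in $L^{2}(0,T;\R^{d})$ gives
\begin{equation*}
\| \dot\gamma^{*} \|_{2} \leq \| A \|\, \| \gamma^{*} \|_{2} + \| B \|\, \| u^{*} \|_{2}.
\end{equation*}
For the first term, Corollary \ref{infboundmin} yields $\| \gamma^{*} \|_{\infty} \leq \tilde{C}_{1}(1+|x|)$, hence $\| \gamma^{*} \|_{2} \leq T^{1/2}\tilde{C}_{1}(1+|x|)$; for the second, Proposition \ref{bound} gives $\| u^{*} \|_{2} \leq K$. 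Since $1+|x| \geq 1$, I can absorb the constant $\| B \| K$ into the factor $(1+|x|)$ and conclude
\begin{equation*}
\| \dot\gamma^{*} \|_{2} \leq \big( \| A \| T^{1/2}\tilde{C}_{1} + \| B \| K \big)(1+|x|) =: \tilde{C}_{2}(1+|x|),
\end{equation*}
which is the first assertion.

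For the H\"older continuity, I would fix $0 \leq s \leq t \leq T$ and write $\gamma^{*}(t)-\gamma^{*}(s)=\int_{s}^{t}{\dot\gamma^{*}(\tau)\ d\tau}$. Applying the Cauchy--Schwarz inequality then gives
\begin{equation*}
|\gamma^{*}(t)-\gamma^{*}(s)| \leq \int_{s}^{t}{|\dot\gamma^{*}(\tau)|\ d\tau} \leq |t-s|^{1/2}\, \| \dot\gamma^{*} \|_{2} \leq \tilde{C}_{2}(1+|x|)\,|t-s|^{1/2}.
\end{equation*}
Since the constant $\tilde{C}_{2}(1+|x|)$ depends only on $x$ and not on the particular optimal control and trajectory, this shows that every $\gamma^{*} \in \Gamma^{*}(x)$ is $\frac{1}{2}$-H\"older continuous with the same modulus, i.e.\ the family is uniformly H\"older continuous.

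There is no substantial obstacle here, as the statement is a direct consequence of the two preceding estimates. The only points requiring care are the absorption of the constant $\| B \| K$ into the factor $(1+|x|)$, and the observation that Cauchy--Schwarz converts an $L^{2}$ bound on $\dot\gamma^{*}$ into a $\frac{1}{2}$-H\"older bound on $\gamma^{*}$ with a constant that is uniform over $\Gamma^{*}(x)$.
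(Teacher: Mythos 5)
Your proof is correct and follows essentially the same route as the paper: bound $\| \dot\gamma^{*}\|_{2}$ via the linear dynamics together with Proposition \ref{bound} and Corollary \ref{infboundmin}, then convert this into $\frac{1}{2}$-H\"older continuity by Cauchy--Schwarz. The only (cosmetic) difference is that you explicitly absorb the constant $\| B \| K$ into the factor $(1+|x|)$ so the bound literally matches the stated form $\tilde{C}_{2}(1+|x|)$, a step the paper leaves implicit.
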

\begin{proof}
First, we have that by Proposition \ref{bound} and Corollary \ref{infboundmin} the following estimates holds true 
\begin{align*}
& \| \dot\gamma^{*}\|_{2} = \| A\gamma^{*}(t)+Bu^{*}(t) \|_{2} \leq \| A\|^{\frac{1}{2}} \| \gamma^{*} \|_{2} + \| B\|^{\frac{1}{2}} \| u^{*}\|_{2}
\\
\leq & \| A\|^{\frac{1}{2}} \left(\int_{0}^{T}{|\gamma^{*}(t)|^{2}dt}\right)^{\frac{1}{2}} + \| B\|^{\frac{1}{2}} K
\\
\leq & \| A\|^{\frac{1}{2}} T^{\frac{1}{2}}\tilde{C}\big(1+|x|\big) + \| B\|^{\frac{1}{2}} K.
\end{align*}
Thus, for any $t$, $s \in [0,T]$ such that $s \leq t$ we get
\begin{align*}
	&  |\gamma^{*}(t)-\gamma^{*}(s)| \leq \int_{s}^{t}{|\dot\gamma^{*}(\tau)|\ d\tau}
	 \\
	 \leq\ & \| \dot\gamma^{*}\|_{2}|t-s|^{\frac{1}{2}} \leq  \Big(\| A\|^{\frac{1}{2}} T^{\frac{1}{2}}\tilde{C}\big(1+|x|\big) + \| B\|^{\frac{1}{2}} K \Big)|t-s|^{\frac{1}{2}}.
\end{align*}
\end{proof}
In order to construct the Lagrangian formulation of our Mean Field Games problem we are going to give a special structure to the family of probability measures $\{m_{t}\}_{t \in [0,T]}$.
Let $\alpha > 1$ and let $m_{0}$ be a Borel probability measure in $\mathcal{P}_{\alpha}(\mathbb{R}^{d})$. Denote by $[m_{0}]_{\alpha}$ the $\alpha$-moment of $m_{0}$, i.e. 
\begin{equation}\label{mom}
[m_{0}]_{\alpha}=\int_{\mathbb{R}^{d}}{|x|^{\alpha}\ m_{0}(dx)}.
\end{equation} 
Let $R$ be a real constant such that $R \geq [m_{0}]_{\alpha}$ and define the following space of probability measures on $\Gamma_{T}$
\begin{align*}
\mathcal{P}_{m_{0}}(\Gamma_{T}, R)=\left\{ \eta \in \mathcal{P}(\Gamma_{T}) :\int_{\Gamma_{T}}{\| \dot\gamma \|_{2}^{\alpha}\ \eta(d\gamma)}\leq R, \ e_{0} \sharp \eta=m_{0} \right\}
\end{align*}
where $e_{t}(\gamma)=\gamma(t)$ is the evaluation map. Note that the sets $\PP_{m_{0}}(\Gamma_{T}, R)$ are compact subsets of $\PP(\Gamma_{T})$. Indeed, define $\mathcal{C}_{r}$ for any $r>0$ the following sets
\begin{equation*}
\mathcal{C}_{r}=\{\gamma \in \Gamma_{T}: |\gamma(0)| \leq r,\ \|\dot\gamma\|_{2} \leq r\}.	
\end{equation*}
  which are compact by Ascoli-Arzela Theorem. Moreover, observe that by definition
  \begin{equation*}
  \mathcal{C}_{r}^{c} \subset \{\gamma \in \Gamma_{T}: \| \dot\gamma\|_{2} > r\} \cup \{\gamma \in \Gamma_{T}: |\gamma(0)| > r \}.	
  \end{equation*}
   Then, given $\eta \in \PP_{m_{0}}(\Gamma_{T},R)$ we have that by definition 
   \begin{equation*}
   \eta(\{ \gamma \in \Gamma_{T}: |\gamma(0)|>r\}) =m_{0}(B^{c}_{r})	
   \end{equation*}
   which goes to zero as $r \to +\infty$ and by Bienaym\'e-Tchebychev  inequality we have that
  \begin{equation*}
  	\eta(\{ \gamma \in \Gamma_{T}: \|\dot\gamma\|_{2} > r\}) \leq \frac{R}{r^{\alpha}}
  \end{equation*}
  Thus, we get
  \begin{equation*}
  \eta(\mathcal{C}_{r}^{c}) \leq \frac{R}{r^{\alpha}}+m_{0}(B_{r}^{c}).	
  \end{equation*}
Therefore, we deduce that $\PP_{m_{0}}(\Gamma_{T}, R)$ is compact since it is tight. 
 
\begin{remark}\label{empty}
	\em{There exist at least one constant $R \geq [m_{0}]_{\alpha}$ such that the set $\mathcal{P}_{m_0} (\Gamma_{T}, R)$ is non-empty. Indeed, fixed a Borel probability measure $m_{0} \in \PP_{\alpha}(\R^{d})$, consider the map $p: \mathbb{R}^{d} \to \Gamma_{T}$ such that $$x \mapsto p[x](t):= e^{tA}x,\quad \forall t \in [0,T]$$  and define the measure $\eta=p \sharp m_{0} \in \mathcal{P}(\Gamma_{T})$. Note that, for any $x \in \R^{d}$ the curve $e^{tA}x$ is an admissible curve associated with the control $u \equiv 0$.

Then, the following holds:
	\begin{enumerate}
	\item for any bounded continuous function $f$ on $\mathbb{R}^{d}$, we have that $e_{0} \sharp \eta = m_{0}$. Indeed,
	\begin{align*}
	& \int_{\mathbb{R}^{d}}{f(x)\ e_{0} \sharp \eta(dx)}= \int_{\Gamma_{T}}{f(\gamma(0))\ \eta(d\gamma)} 
	\\
	=& \int_{\Gamma_{T}}{f(\gamma(0))\ p\sharp m_{0}(d\gamma)} = \int_{\mathbb{R}^{d}}{f(p[x](0))\ m_{0}(dx)}
	\\
	=& \int_{\mathbb{R}^{d}}{f(x)\ m_{0}(dx)};
	\end{align*}
	\item the $\alpha$-moment of $\eta$ is bounded:
	\begin{align*}
		& \int_{\Gamma_{T}}{\|\dot\gamma \|_{2}^{\alpha}\ \eta(d\gamma)} = \int_{\mathbb{R}^{d}}{\|\dot p[x] \|_{2}^{\alpha}\ m_{0}(dx)}
		\\
		 \leq\ & \left( \|A\|e^{T \|A\|}\right)^{\alpha}\int_{\mathbb{R}^{d}}{|x|^{\alpha}\ m_{0}(dx)} \leq \left( \|A\|e^{T \|A\|}\right)^{\alpha}[m_{0}]_{\alpha}.
	\end{align*}
	\end{enumerate}
Therefore, taking $R \geq \left( \|A\|e^{T \|A\|}\right)^{\alpha}[m_{0}]_{\alpha}$ we have that $\eta \in \mathcal{P}_{m_{0}}(\Gamma_{T}, R)$.} $\quad\quad\quad\quad\quad\quad\quad\quad\quad\quad\quad\square$
\end{remark}

\subsection{Definitions and first properties}
For any $x \in \mathbb{R}^{d}$, any $u \in L^{1}(0,T)$ and any $\eta \in \mathcal{P}_{m_{0}}(\Gamma_{T}, R)$, define the functional 
\begin{align*}
J_{\eta}(x, u)=\int_{0}^{T}{L(\gamma(t,x,u), u(t), e_{t} \sharp \eta)\ dt} + G(\gamma(T,x,u), e_{T} \sharp \eta)
\end{align*}
and the associated optimal control problem
\begin{equation}\label{min}
\inf_{u \in L^{2}(0,T;\ \mathbb{R}^{k})} J_{\eta}(x, u).
\end{equation}
Notice that the restriction to controls $u \in L^{2}(0,T; \R^{k})$ is due to the structure assumptions we imposed on $L$.

We denote by $\Gamma_{\eta}^{*}(x)$ the set of curves associated with an optimal control $u^{*}$, i.e.
\begin{equation*}
\Gamma^{*}_{\eta}(x)=\Big\{\gamma(\cdot\ ;x,u^{*}): J_{\eta}(x,u^{*})=\inf_{u \in L^{2}(0,T; \R^{k})} J_{\eta}(x,u) \Big\}.
\end{equation*}

\begin{definition}[{\bf Mean Field Games equilibrium}]
Given $m_{0} \in \mathcal{P}_{\alpha}(\mathbb{R}^{d})$, we say that $\eta \in \mathcal{P}_{m_{0}}(\Gamma_{T}, R)$ is a Mean Field Games equilibrium for $m_{0}$ if
\begin{equation*}
\supp(\eta) \subset \bigcup_{x \in \mathbb{R}^{d}} \Gamma^{*}_{\eta}(x).
\end{equation*}
\end{definition}


\begin{proposition}\label{measureregularity}
Under the above assumptions the following holds true.
\begin{enumerate}
    \item For any  $\eta \in \mathcal{P}_{m_{0}}(\Gamma_{T}, R)$ we have that 
    \begin{equation}\label{momt}
    \displaystyle{\sup_{t \in [0,T]}} \int_{\mathbb{R}^{d}}{|x|^{\alpha}\ e_{t} \sharp \eta(dx)} \leq R.
    \end{equation}
    Consequently, the family of measures $\{ e_{t} \sharp \eta \}_{t \in [0,T]}$ is compact.
	\item For any $\{ \eta_{i}\}_{i \in \mathbb{N}} \subset \mathcal{P}_{m_{0}}(\Gamma_{T}, R)$ and $\eta \in \mathcal{P}_{m_{0}}(\Gamma_{T}, R)$ such that $\eta_{i} \rightharpoonup^{*} \eta$ we have that $$d_{1}(e_{t} \sharp \eta_{i}, e_{t} \sharp \eta) \to 0$$ for every $t \in [0,T]$.
    \item For any $\eta \in \mathcal{P}_{m_{0}}(\Gamma_{T},R)$ we have that the map $t \in [0,T] \mapsto e_{t} \sharp \eta$ is continuous.
\end{enumerate}
\end{proposition}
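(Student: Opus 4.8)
The plan is to push the $\alpha$-moment bound defining $\PP_{m_{0}}(\Gamma_{T},R)$ forward through the evaluation map. By the change-of-variables formula for push-forwards, $\int_{\R^{d}}|x|^{\alpha}\,e_{t}\sharp\eta(dx)=\int_{\Gamma_{T}}|\gamma(t)|^{\alpha}\,\eta(d\gamma)$. Since $\gamma(t)=\gamma(0)+\int_{0}^{t}\dot\gamma(s)\,ds$, H\"older's inequality gives $|\gamma(t)|\le|\gamma(0)|+T^{1/2}\|\dot\gamma\|_{2}$, and the elementary convexity inequality $(a+b)^{\alpha}\le 2^{\alpha-1}(a^{\alpha}+b^{\alpha})$ yields $|\gamma(t)|^{\alpha}\le 2^{\alpha-1}\big(|\gamma(0)|^{\alpha}+T^{\alpha/2}\|\dot\gamma\|_{2}^{\alpha}\big)$. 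Integrating against $\eta$ and using $e_{0}\sharp\eta=m_{0}$ together with $[m_{0}]_{\alpha}\le R$ and the defining bound $\int_{\Gamma_{T}}\|\dot\gamma\|_{2}^{\alpha}\,\eta(d\gamma)\le R$, I obtain a bound of the form $\int_{\R^{d}}|x|^{\alpha}\,e_{t}\sharp\eta(dx)\le 2^{\alpha-1}(1+T^{\alpha/2})R$, uniform in $t\in[0,T]$. The substantive content of (1) is thus the uniform-in-$t$ boundedness of the $\alpha$-moment (the precise constant being whatever this computation produces). As this bound holds with $r=\alpha>1=p$, Theorem \ref{compactnesscrit} gives that $\{e_{t}\sharp\eta\}_{t\in[0,T]}$ is relatively compact for $d_{1}$.

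\textbf{Part (2).} First I would note that $e_{t}:\Gamma_{T}\to\R^{d}$ is continuous for the uniform metric, so for every $f\in C_{b}(\R^{d})$ one has $f\circ e_{t}\in C_{b}(\Gamma_{T})$; hence $\eta_{i}\rightharpoonup^{*}\eta$ forces $\int_{\R^{d}}f\,d(e_{t}\sharp\eta_{i})=\int_{\Gamma_{T}}f\circ e_{t}\,d\eta_{i}\to\int_{\Gamma_{T}}f\circ e_{t}\,d\eta=\int_{\R^{d}}f\,d(e_{t}\sharp\eta)$, i.e. $e_{t}\sharp\eta_{i}$ narrowly converges to $e_{t}\sharp\eta$. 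To upgrade narrow convergence to convergence in $d_{1}$, I invoke the uniform $\alpha$-moment bound from (1): since $\alpha>1$, Remark \ref{criterion} shows the family $\{e_{t}\sharp\eta_{i}\}_{i}$ has uniformly integrable first moment, and then the characterization in Theorem \ref{compactness} (narrow convergence together with uniform integrability of the $1$-moment is equivalent to $d_{1}$-convergence) gives $d_{1}(e_{t}\sharp\eta_{i},e_{t}\sharp\eta)\to 0$ for each fixed $t$.

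\textbf{Part (3).} Continuity in $t$ I would get directly from a coupling estimate: the image measure $(e_{s},e_{t})\sharp\eta$ is an admissible plan in $\Pi(e_{s}\sharp\eta,e_{t}\sharp\eta)$, so
\[
d_{1}(e_{s}\sharp\eta,e_{t}\sharp\eta)\le\int_{\Gamma_{T}}|\gamma(t)-\gamma(s)|\,\eta(d\gamma)\le\int_{\Gamma_{T}}\Big(\int_{s}^{t}|\dot\gamma(\tau)|\,d\tau\Big)\eta(d\gamma)\le|t-s|^{1/2}\int_{\Gamma_{T}}\|\dot\gamma\|_{2}\,\eta(d\gamma).
\]
By Jensen's inequality (recall $\alpha>1$ and $\eta$ is a probability measure), $\int_{\Gamma_{T}}\|\dot\gamma\|_{2}\,\eta(d\gamma)\le\big(\int_{\Gamma_{T}}\|\dot\gamma\|_{2}^{\alpha}\,\eta(d\gamma)\big)^{1/\alpha}\le R^{1/\alpha}$, whence $d_{1}(e_{s}\sharp\eta,e_{t}\sharp\eta)\le R^{1/\alpha}|t-s|^{1/2}$, which gives (in fact $\tfrac12$-H\"older) continuity of $t\mapsto e_{t}\sharp\eta$.

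The moment computation in (1) and the coupling estimate in (3) are routine. The one step requiring genuine care is the passage in (2) from weak-$*$ convergence on path space to $d_{1}$-convergence of the time-$t$ marginals: narrow convergence alone does not control the tails of the measures, and it is precisely the uniform $\alpha$-moment bound of (1), with $\alpha>1$, that supplies the missing uniform integrability and closes the argument.
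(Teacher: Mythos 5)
Your proposal is correct, and it is in fact more complete than what the paper does: the paper only proves part (1) and delegates parts (2) and (3) to an external reference (\cite[Lemma 3.2]{bib:CC}), whereas you give self-contained arguments for all three parts. For part (1) your route is essentially the paper's, but you fill a real gap: the paper bounds $\int_{\Gamma_{T}}\|\gamma\|_{\infty}^{\alpha}\,\eta(d\gamma)$ by a constant ``by definition of $\PP_{m_{0}}(\Gamma_{T},R)$'', yet the definition only controls $\int_{\Gamma_{T}}\|\dot\gamma\|_{2}^{\alpha}\,\eta(d\gamma)$ and the initial marginal; your inequality $|\gamma(t)|\le|\gamma(0)|+T^{1/2}\|\dot\gamma\|_{2}$ plus convexity is exactly the missing step, and your observation that the resulting constant is $2^{\alpha-1}(1+T^{\alpha/2})R$ rather than the $R$ displayed in \eqref{momt} is a fair correction of an imprecision in the statement (the paper's own proof also quietly replaces $R$ by an unspecified $C_{0}$). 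For part (2), your argument --- narrow convergence of marginals via continuity of $e_{t}$, upgraded to $d_{1}$-convergence through the uniform $\alpha$-moment bound, Remark \ref{criterion} and Theorem \ref{compactness} --- is precisely the kind of argument the cited lemma encapsulates, and you correctly identify uniform integrability as the crux. For part (3), your coupling estimate via $(e_{s},e_{t})\sharp\eta\in\Pi(e_{s}\sharp\eta,e_{t}\sharp\eta)$ together with Jensen's inequality is clean and actually yields more than continuity: it gives $\tfrac12$-H\"older continuity with constant $R^{1/\alpha}$ for \emph{every} $\eta\in\PP_{m_{0}}(\Gamma_{T},R)$, not just for equilibria --- the paper only obtains this later (Theorem \ref{holder}) for Mean Field Games equilibria, using optimality through Lemma \ref{equicontinuity}. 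What the paper's citation-based approach buys is brevity; what your approach buys is a self-contained proof and a quantitative modulus of continuity valid on the whole constraint set.
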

\begin{proof}
We are going to prove only the point (1), see \cite[Lemma 3.2]{bib:CC} for a proof of (2) and (3).
\begin{enumerate}
    \item Given $\eta \in \mathcal{P}_{m_{0}}(\Gamma_{T}, R)$ we have that
    \begin{align*}
     \int_{\mathbb{R}^{d}}{|x|^{\alpha}\ e_t \sharp \eta(dx)} = \int_{\Gamma_{T}}{|\gamma(t)|^{\alpha}\ \eta(d\gamma)}\leq \int_{\Gamma_{T}}{\| \gamma \|_{\infty}^{\alpha}\ \eta(d\gamma)} \leq C_{0},
    \end{align*}
    where the last inequality holds by definition of $\PP_{m_{0}}(\Gamma_{T}, R)$.
   So, by Theorem \ref{compactnesscrit} the family of measures $\{ e_{t} \sharp \eta\}_{t \in [0,T]}$ is compact in $\PP_{\alpha}(\R^{d})$ with respect to the $d_{1}$ distance since by assumption $\alpha > 1$.
\end{enumerate}	
\end{proof}

 \begin{remark}
 Note that, in \eqref{momt} the constant $R$ in independent of $t \in [0,T]$ and of $\eta$. Indeed, as explained so far it is fixed a priori such that $R \geq [m_{0}]_{\alpha}$. 
 \end{remark}

\section{Mean Field Games equilibria: Existence and Uniqueness}

At this point, it is not difficult to prove that for any given $\alpha >0 $ and any given initial measure $m_{0} \in \PP_{\alpha}(\R^{d})$ there exists $R_{0} \geq 0$ such that for any $R \geq R_{0}$ there exists at least one Mean Field Games equilibrium $\eta \in \PP_{m_{0}}(\Gamma_{T}, R)$ and that, under a classical monotonicity assumption, such an equilibrium is unique.

For the sake of completeness, we give below the key ideas and steps to prove the existence of a Mean Field Games equilibrium, following the appoach in  \cite{bib:CC}.

Given $m_{0} \in \mathcal{P}_{\alpha}(\mathbb{R}^{d})$ and given $\eta \in \mathcal{P}_{m_{0}}(\Gamma_{T}, R)$ we recall that by the Theorem \ref{disintegration} there exists a unique Borel measurable family of probability measures $\{\eta_{x}\}_{x \in \R^{d}}$ on $\Gamma_{T}$ such that 
\begin{align*}
\eta(d\gamma) =& \int_{\mathbb{R}^{d}}{\eta_{x}(d\gamma)\ m_{0}(dx)}
\\
\supp(\eta_{x}) \subset\ &\ \Gamma_{T}(x), \quad m_{0}-\text{a.e.},\ x \in \mathbb{R}^{d}.
\end{align*}
Define the set-valued map 
\begin{equation*}
E:\ \big(\PP_{m_{0}}(\Gamma_{T}, R), d_{1} \big) \rightrightarrows \big( \PP_{m_{0}}(\Gamma_{T}, R), d_{1} \big)
\end{equation*}
that associates with any $\eta \in \mathcal{P}_{m_{0}}(\Gamma_{T}, R)$ the set
\begin{equation*}
E(\eta)=\Big\{\nu \in \mathcal{P}_{m_{0}}(\Gamma_{T}, R):\  \supp(\nu_{x}) \subset\ \Gamma^{*}_{\eta}(x),\ m_{0}-\text{a.e.}  \Big\}.
\end{equation*}
It is easy to realize that a given $\eta \in \PP_{m_{0}}(\Gamma_{T}, R)$ is a Mean Field Games equilibrium if and only if $\eta$ is a fixed point of the above set-valued map, that is, $\eta \in E(\eta)$. Therefore, in order to prove the existence of Mean Field Games equilibria, we appeal to Kakutani-Fan-Glicksberg's fixed point theorem, see for instance \cite[Corollary 17.55]{bib:AB}, which provides conditions under which the set-valued map $E$ has a fixed point.

We check the validity of such conditions in the following Lemmas.


\begin{lemma}\label{closedlemma}
Let $R \geq [m_{0}]_{\alpha}$.  For any $x_{i} \to x$ in $\mathbb{R}^{d}$, for any $\eta_{i} \rightharpoonup^{*} \eta$ in $\mathcal{P}_{m_{0}}(\Gamma_{T}, R)$ and for any $\gamma_{i} \in \Gamma_{\eta_{i}}^{*}(x_{i})$ such that $\gamma_{i} \to \gamma$ in $\Gamma_{T}$ we have that $\gamma \in \Gamma_{\eta}^{*}(x)$.
\end{lemma}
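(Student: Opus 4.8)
The plan is to prove that the limit curve $\gamma$ is admissible for the control problem with initial point $x$ and that it is optimal for $J_{\eta}(x,\cdot)$, which is exactly the assertion $\gamma\in\Gamma^{*}_{\eta}(x)$. First I would settle admissibility. Each $\gamma_{i}\in\Gamma^{*}_{\eta_{i}}(x_{i})$ equals $\gamma(\cdot\,;x_{i},u_{i})$ for an optimal control $u_{i}$ with $\gamma_{i}(0)=x_{i}$, and by Proposition \ref{bound} we have $\|u_{i}\|_{2}\leq K$ for every $i$; hence, up to a subsequence (not relabeled), $u_{i}\rightharpoonup u$ weakly in $L^{2}(0,T;\R^{k})$ for some $u$. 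Since
\[
\gamma_{i}(t)=e^{tA}x_{i}+\int_{0}^{t}e^{(t-s)A}Bu_{i}(s)\,ds
\]
and, for each fixed $t$, the map $w\mapsto\int_{0}^{t}e^{(t-s)A}Bw(s)\,ds$ is a bounded linear (hence weakly continuous) operator on $L^{2}$, using $x_{i}\to x$ I can pass to the limit pointwise in $t$. As $\gamma_{i}\to\gamma$ uniformly, this forces $\gamma=\gamma(\cdot\,;x,u)\in\Gamma_{T}(x)$, so $\gamma$ is admissible with control $u$.

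Next I would establish the lower semicontinuity inequality $J_{\eta}(x,u)\leq\liminf_{i}J_{\eta_{i}}(x_{i},u_{i})$. The terminal term converges: by {\bf (L2)} the function $G$ is continuous, $\gamma_{i}(T)\to\gamma(T)$, and $d_{1}(e_{T}\sharp\eta_{i},e_{T}\sharp\eta)\to0$ by Proposition \ref{measureregularity}(2), so $G(\gamma_{i}(T),e_{T}\sharp\eta_{i})\to G(\gamma(T),e_{T}\sharp\eta)$. For the running cost I first freeze the measure slot: by the Lipschitz estimate {\bf (L1)},
\[
\int_{0}^{T}\big|L(\gamma_{i}(t),u_{i}(t),e_{t}\sharp\eta_{i})-L(\gamma_{i}(t),u_{i}(t),e_{t}\sharp\eta)\big|\,dt\leq Q_{L}\int_{0}^{T}d_{1}(e_{t}\sharp\eta_{i},e_{t}\sharp\eta)\,dt,
\]
and since the integrand tends to $0$ pointwise and is bounded uniformly (both measures have $\alpha$-moment at most $R$, so their $d_{1}$-distance is bounded), dominated convergence makes this error vanish. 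With the measure argument now fixed, I invoke the classical lower semicontinuity theorem for integral functionals whose integrand is convex in the control: $u\mapsto L(x,u,m)$ is convex by {\bf (L3)}$(i)$, $L$ is continuous, and $L\geq -c_{1}$ by Remark \ref{assumptions}; hence $\gamma_{i}\to\gamma$ uniformly together with $u_{i}\rightharpoonup u$ gives
\[
\int_{0}^{T}L(\gamma(t),u(t),e_{t}\sharp\eta)\,dt\leq\liminf_{i}\int_{0}^{T}L(\gamma_{i}(t),u_{i}(t),e_{t}\sharp\eta)\,dt .
\]
Adding the terminal contribution yields the claimed inequality.

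Finally I would compare with an arbitrary competitor. Fix any $v\in L^{2}(0,T;\R^{k})$ and set $\gamma_{v}^{i}=\gamma(\cdot\,;x_{i},v)$, $\gamma_{v}=\gamma(\cdot\,;x,v)$; since the control is fixed and $x_{i}\to x$, we have $\gamma_{v}^{i}\to\gamma_{v}$ uniformly. Using continuity of $L$ and $G$, the Lipschitz bound {\bf (L1)} with $d_{1}(e_{t}\sharp\eta_{i},e_{t}\sharp\eta)\to0$, and the growth bound $|L(x,u,m)|\leq c_{1}+\tfrac{1}{c_{0}}|u|^{2}$ from Remark \ref{assumptions} as an integrable dominating function (recall $v\in L^{2}$), dominated convergence gives $J_{\eta_{i}}(x_{i},v)\to J_{\eta}(x,v)$. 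By optimality of $u_{i}$ at $x_{i}$ one has $J_{\eta_{i}}(x_{i},u_{i})\leq J_{\eta_{i}}(x_{i},v)$, whence
\[
J_{\eta}(x,u)\leq\liminf_{i}J_{\eta_{i}}(x_{i},u_{i})\leq\lim_{i}J_{\eta_{i}}(x_{i},v)=J_{\eta}(x,v).
\]
As $v$ is arbitrary, $u$ is optimal for $J_{\eta}(x,\cdot)$ and therefore $\gamma=\gamma(\cdot\,;x,u)\in\Gamma^{*}_{\eta}(x)$. I expect the main obstacle to be the lower semicontinuity step: weak $L^{2}$-convergence of the controls cannot be passed under the integral sign without convexity, so assumption {\bf (L3)}$(i)$ is essential, and the simultaneous variation of the measure argument $e_{t}\sharp\eta_{i}$ must be absorbed beforehand through the Lipschitz estimate {\bf (L1)} and dominated convergence, reducing matters to the standard convex lower semicontinuity theorem with a fixed integrand.
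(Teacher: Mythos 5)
Your proof is correct and follows essentially the same strategy as the paper's: extract a weak $L^{2}$ limit $u$ of the optimal controls via Proposition \ref{bound}, identify $\gamma=\gamma(\cdot\,;x,u)$ through the variation-of-constants formula, peel off the perturbation of the measure argument using the Lipschitz bound \textbf{(L1)}, and exploit convexity of $L$ in $u$ (assumption \textbf{(L3)}$(i)$) to get weak lower semicontinuity of the running cost. The only differences are cosmetic or in your favor: where you invoke the classical lower semicontinuity theorem for integrands convex in the control with uniformly convergent states, the paper freezes both the curve and the measure at their limits and proves weak lower semicontinuity by hand, showing the sublevel sets $X_{\lambda}$ are convex and strongly closed (Fatou), hence weakly closed; and your final competitor-comparison step, $J_{\eta_{i}}(x_{i},u_{i})\leq J_{\eta_{i}}(x_{i},v)\to J_{\eta}(x,v)$ by dominated convergence, is actually spelled out nowhere in the paper, whose written proof stops at the liminf inequality even though optimality against arbitrary $v\in L^{2}$ is what the lemma requires--so your version closes a step the paper leaves implicit.
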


\begin{proof}
	Since $\gamma_{i} \in \Gamma_{\eta_{i}}^{*}(x_{i})$ we know that there exists a sequence of optimal controls $u_{i} \in L^{2}(0,T)$ such that $\gamma_{i}(\cdot)=\gamma_{i}(\cdot, x_{i}, u_{i})$ for every $t \in [0,T]$. Moreover, from Proposition \ref{bound} we get that $\| u_{i} \|_{2} \leq K$.
	Therefore, up to a subsequence, we obtain that there exists $\bar{u} \in L^{2}(0,T)$ such that $u_{i} \rightharpoonup \bar{u}$ in $L^{2}$. Hence, we are reduced to prove that 
	\begin{enumerate}
		\item $\bar{\gamma}(\cdot)=\gamma(\cdot, x, \bar{u})$;
				\item $J_{\eta}(x,\bar{u}) \leq J_{\eta}(x, u)$ for every $u \in L^{2}([0,T])$,
\end{enumerate}
	\vspace{0.3cm}
	\underline{\bf Point 1:}
	\vspace{0.3cm}
 
	By definition of $\gamma_{i}$, we obtain that
	\begin{equation*}
	\gamma_{i}(t)=e^{At}x+\int_{0}^{t}{e^{A(t-s)}Bu_{i}(s)\ ds}.	
	\end{equation*}
    Let $v$ be a vector on $\R^{d}$, then
    \begin{align*}
    \langle v, \gamma_{i}(t) \rangle =&\ \langle v, e^{At}x \rangle	+ \int_{0}^{t}{\langle v, e^{A(t-s)}Bu_{i}(s) \rangle\ ds}
    \\ 
    =&\ \langle v, e^{At}x \rangle	+ \int_{0}^{t}{\langle (e^{A(t-s)}B)^{*}v, u_{i}(s) \rangle\ ds}.
    \end{align*}
    Thus, letting $i \to \infty$ by the weak $L^{2}$ convergence of $u_{i}$ we obtain that 
    \begin{align*}
    	\langle v, \bar\gamma(t) \rangle = \langle v, e^{At}x \rangle	+ \int_{0}^{t}{\langle f(t), e^{A(t-s)}B\bar{u}(s) \rangle\ ds}.
    \end{align*}
    This concludes the proof of point 1.

	\vspace{0.3cm}
	\noindent
	\underline{\bf Point 2:}
	\vspace{0.3cm}
	
	We now prove  that 
	\begin{equation*}
	J_{\eta}(x, \bar{u}) \leq \liminf_{i \to \infty} J_{\eta_{i}}(x_{i}, u_{i}).	\end{equation*}
       By assumptions on $G$, it follows that
       \begin{equation*}
       G(\gamma_{i}(T), e_{T} \sharp \eta_{i}) \to G(\gamma(T), e_{T} \sharp \eta).	
       \end{equation*}
       Therefore, it suffices to prove that
       \begin{equation*}
      \int_{0}^{T}{L(\bar\gamma(t), \bar{u}(t), e_{t} \sharp \eta)\ dt} \leq \liminf_{i \to \infty} \int_{0}^{T}{L(\gamma_{i}(t), u_{i}(t), e_{t} \sharp \eta_{i})\ dt}.\end{equation*}
Now, 
\begin{align*}
& \int_{0}^{T}{\big(L(\bar\gamma(t), \bar{u}(t), e_{t} \sharp \eta)-L(\gamma_{i}(t), u_{i}(t), e_{t} \sharp \eta) \big)\ dt}
\\
=& \underbrace{\int_{0}^{T}{\big(L(\bar\gamma(t), \bar{u}(t), e_{t} \sharp \eta)-L(\bar\gamma(t), u_{i}(t), e_{t} \sharp \eta) \big)\ dt}}_{\bf A}
\\
+& \underbrace{\int_{0}^{T}{\big(L(\bar\gamma(t), u_{i}(t), e_{t} \sharp \eta)-L(\gamma_{i}(t), u_{i}(t), e_{t} \sharp \eta_{i}) \big)\ dt}}_{\bf B}.
\end{align*}
 By assumption {\bf (L3)} $(iii)$ and Lipschitz condition {\bf (L1)} it follows that $\text{\bf B} \to 0$ as $i \to 0$.   Thus, we have to prove now that the functional 
 \begin{equation*}
\Lambda (u)=\int_{0}^{T}{L(\bar\gamma(t), u(t), e_{t} \sharp \eta)\ dt}
\end{equation*}
is weakly lower semicontinuous with respect to the $L^{2}$ topology. Define, for every $\lambda \in \mathbb{R}$,
	\begin{equation*}
	X_{\lambda}=\{u \in L^{2}(0,T) : \Lambda(u) \leq \lambda \}.	\end{equation*}
By assumption {\bf (L3)} on convexity of the Lagrangian $L$ with respect to controls, we get that the sets $X_{\lambda}$ are convex. Furthermore, such sets are closed in the strong $L^{2}$ topology. Indeed, if $\{ u_{i} \}_{i \in \mathbb{N}} \subset X_{\lambda}$ is such that $u_{i} \to u_{\infty}$ in $L^{2}$ then $u_{i} \to u_{\infty}$ a.e. up to a subsequence. Thus, by the continuity of $L$ we have that $L(\gamma(t), u_{i}(t), e_{t} \sharp \eta) \to L(\gamma(t), u_{\infty}(t), e_{t} \sharp \eta)$ a.e. and by the growth assumption $L$ is bounded from below. Therefore, by Fatou's Lemma we obtain that $u_{\infty} \in X_{\lambda}$. 
Hence, since the sets $X_{\lambda}$ are convex and strongly closed it implies that they are closed also in the $L^{2}$ weak topology. This concludes the proof of point 2.
\end{proof}

\begin{corollary}\label{closed1}
The set-valued map 
\begin{align*}
\phi: \big(\R^{d}, | \cdot | \big) & \rightrightarrows  \big(\Gamma_{T}, \| \cdot \|_{\infty} \big)
\\
x & \mapsto  \Gamma_{\eta}^{*}(x)
\end{align*}
has closed graph.	
\end{corollary}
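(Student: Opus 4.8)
The plan is to recognize that this corollary is nothing but the special case of Lemma~\ref{closedlemma} obtained by freezing the measure argument. Since $\R^{d}$ and $\Gamma_{T}$ are both metric spaces, the product $\R^{d} \times \Gamma_{T}$ is metrizable, and hence closedness of the graph of $\phi$ is equivalent to its \emph{sequential} closedness. Thus it suffices to take an arbitrary sequence $(x_{i}, \gamma_{i})$ in the graph of $\phi$, that is, points $x_{i} \in \R^{d}$ together with curves $\gamma_{i} \in \Gamma^{*}_{\eta}(x_{i})$, converging to some $(x, \gamma) \in \R^{d} \times \Gamma_{T}$, and to show that the limit $\gamma$ belongs to $\Gamma^{*}_{\eta}(x)$.

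To do this I would set $\eta_{i} := \eta$ for every $i \in \N$. This constant sequence trivially satisfies $\eta_{i} \rightharpoonup^{*} \eta$ in $\PP_{m_{0}}(\Gamma_{T}, R)$, so all hypotheses of Lemma~\ref{closedlemma} are met: we have $x_{i} \to x$ in $\R^{d}$, $\eta_{i} \rightharpoonup^{*} \eta$, $\gamma_{i} \in \Gamma^{*}_{\eta_{i}}(x_{i}) = \Gamma^{*}_{\eta}(x_{i})$, and $\gamma_{i} \to \gamma$ in $\Gamma_{T}$. Applying Lemma~\ref{closedlemma} directly yields $\gamma \in \Gamma^{*}_{\eta}(x)$, which is exactly the desired conclusion, so that $(x,\gamma)$ lies in the graph of $\phi$.

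Because the entire analytic core, namely the uniform $L^{2}$ bound on optimal controls from Proposition~\ref{bound}, the weak $L^{2}$ convergence of the controls and the passage to the limit in the representation formula for the trajectories, and the weak lower semicontinuity of the cost functional, has already been carried out in the proof of Lemma~\ref{closedlemma}, there is essentially no remaining obstacle. The only genuinely new observation is the reduction from graph-closedness to sequential graph-closedness, which is immediate from metrizability of the product space; everything else is a verbatim specialization of the lemma to a constant measure sequence.
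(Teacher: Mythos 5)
Your proposal is correct and follows exactly the route the paper intends: the corollary is stated without proof precisely because it is the specialization of Lemma~\ref{closedlemma} to the constant sequence $\eta_{i}=\eta$, combined with the standard fact that graph-closedness in the metrizable product $\R^{d}\times\Gamma_{T}$ reduces to sequential closedness. Nothing is missing.
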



\begin{lemma}\label{convex}
There exists a constant $R(\alpha, [m_{0}]_{\alpha}) > 0$ such that if $R \geq R(\alpha, [m_{0}]_{\alpha})$ then $E(\eta)$ is non-empty. Moreover, $E(\eta)$ is convex and compact.	
\end{lemma}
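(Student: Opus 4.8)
The plan is to treat the three properties separately, after a preliminary observation that the $\alpha$-moment constraint hidden in the definition of $\mathcal{P}_{m_{0}}(\Gamma_{T},R)$ is automatically satisfied on optimal trajectories once $R$ is large. Indeed, by Lemma \ref{equicontinuity} every $\gamma\in\Gamma^{*}_{\eta}(x)$ obeys $\|\dot\gamma\|_{2}\leq\tilde{C}_{2}(1+|x|)$, so any $\nu$ whose disintegration $\{\nu_{x}\}$ satisfies $\supp(\nu_{x})\subset\Gamma^{*}_{\eta}(x)$ for $m_{0}$-a.e. $x$ fulfils
\[
\int_{\Gamma_{T}}{\|\dot\gamma\|_{2}^{\alpha}\ \nu(d\gamma)}=\int_{\R^{d}}\Big(\int_{\Gamma_{T}}{\|\dot\gamma\|_{2}^{\alpha}\ \nu_{x}(d\gamma)}\Big)\,m_{0}(dx)\leq \tilde{C}_{2}^{\alpha}\,2^{\alpha-1}\big(1+[m_{0}]_{\alpha}\big),
\]
using $(1+|x|)^{\alpha}\leq 2^{\alpha-1}(1+|x|^{\alpha})$ for $\alpha>1$. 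Hence I set $R(\alpha,[m_{0}]_{\alpha}):=\tilde{C}_{2}^{\alpha}\,2^{\alpha-1}(1+[m_{0}]_{\alpha})$, and for $R\geq R(\alpha,[m_{0}]_{\alpha})$ the moment bound is harmless: membership in $E(\eta)$ reduces to $e_{0}\sharp\nu=m_{0}$ together with the support condition on the disintegration.

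For non-emptiness I would exhibit a single element of $E(\eta)$ through a measurable selection, and I expect this to be the main obstacle. By Corollary \ref{closed1} the multifunction $\phi:x\mapsto\Gamma^{*}_{\eta}(x)$ has closed graph; its values are non-empty by the direct method (coercivity from Remark \ref{assumptions} plus the weak lower semicontinuity established within the proof of Lemma \ref{closedlemma}), and they are compact subsets of $\Gamma_{T}$ by Lemma \ref{equicontinuity} and Ascoli--Arzel\`a, with closedness of $\Gamma^{*}_{\eta}(x)$ coming from Lemma \ref{closedlemma} applied with $x_{i}\equiv x$ and $\eta_{i}\equiv\eta$. Since $\Gamma_{T}$ sits inside the Polish space $C([0,T];\R^{d})$ and $\phi$ has closed (hence Borel) graph with compact values, a measurable selection theorem (von Neumann--Aumann, or Kuratowski--Ryll-Nardzewski via measurability of $\phi$) provides a Borel map $s:\R^{d}\to\Gamma_{T}$ with $s(x)\in\Gamma^{*}_{\eta}(x)$. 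Setting $\nu:=s\sharp m_{0}$, the identity $e_{0}\circ s=\mathrm{id}_{\R^{d}}$ (optimal curves start at $x$) yields $e_{0}\sharp\nu=m_{0}$, and the disintegration is $\nu_{x}=\delta_{s(x)}$, so $\supp(\nu_{x})\subset\Gamma^{*}_{\eta}(x)$; thus $\nu\in E(\eta)$.

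To handle convexity and compactness together, I would recast the support condition as a single concentration condition. Define the closed set $\widehat{\Gamma}:=\{\gamma\in\Gamma_{T}:\gamma\in\Gamma^{*}_{\eta}(\gamma(0))\}$, which is the preimage of $\mathrm{graph}(\phi)$ under the continuous map $\gamma\mapsto(e_{0}(\gamma),\gamma)$ and is therefore closed by Corollary \ref{closed1}. Since $\Gamma^{*}_{\eta}(x)=\widehat{\Gamma}\cap\Gamma_{T}(x)$ and each $\Gamma^{*}_{\eta}(x)$ is closed, the disintegration identity shows that, for $\nu\in\mathcal{P}_{m_{0}}(\Gamma_{T},R)$, the support condition defining $E(\eta)$ is equivalent to $\nu(\widehat{\Gamma})=1$. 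Convexity is then immediate: $\mathcal{P}_{m_{0}}(\Gamma_{T},R)$ is convex (the moment bound and the marginal constraint are affine in $\nu$), and $\nu\mapsto\nu(\widehat{\Gamma})$ is affine, so $E(\eta)=\{\nu\in\mathcal{P}_{m_{0}}(\Gamma_{T},R):\nu(\widehat{\Gamma})=1\}$ is convex.

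Finally, for compactness I recall that $\mathcal{P}_{m_{0}}(\Gamma_{T},R)$ was already shown to be compact (tightness via Bienaym\'e--Tchebychev), so it suffices to prove $E(\eta)$ closed. If $\nu_{i}\in E(\eta)$ with $\nu_{i}\rightharpoonup^{*}\nu$, then $\nu\in\mathcal{P}_{m_{0}}(\Gamma_{T},R)$ by compactness of the latter, and since $\widehat{\Gamma}$ is closed the Portmanteau theorem gives $\nu(\widehat{\Gamma})\geq\limsup_{i}\nu_{i}(\widehat{\Gamma})=1$, whence $\nu(\widehat{\Gamma})=1$ and $\nu\in E(\eta)$. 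Thus $E(\eta)$ is a closed subset of a compact set, hence compact, completing the proof.
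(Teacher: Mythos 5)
Your proof is correct. On non-emptiness it follows essentially the same route as the paper: both arguments use the closed graph of $\phi$ (Corollary \ref{closed1}) to get a measurable selection $x\mapsto s(x)\in\Gamma^{*}_{\eta}(x)$ (the paper cites \cite[Proposition 9.5]{bib:C} and \cite[Theorem A 5.2]{bib:SC}), push $m_{0}$ forward, and bound the $\alpha$-moment via Lemma \ref{equicontinuity}; incidentally your threshold $\tilde{C}_{2}^{\alpha}2^{\alpha-1}(1+[m_{0}]_{\alpha})$ is the correct form, whereas the paper's $\tilde{C}_{2}^{\alpha}([m_{0}]_{\alpha}+1)$ tacitly uses $(1+|x|)^{\alpha}\leq 1+|x|^{\alpha}$, which fails for $\alpha>1$ --- a harmless slip, since only the existence of some finite threshold matters. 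Where you genuinely diverge is in convexity and compactness. The paper delegates convexity to \cite[Lemma 3.5]{bib:CC} and settles compactness with the one-line remark that $E(\eta)\subset\PP_{m_{0}}(\Gamma_{T},R)$ and the latter is compact; as written this is incomplete, because a subset of a compact set is only relatively compact, and closedness of $E(\eta)$ is never verified inside that proof (it can be extracted a posteriori from Lemma \ref{mfgclosedgraph} with $\eta_{i}\equiv\eta$, but that lemma comes later). Your reformulation of the disintegration condition as the single constraint $\nu(\widehat{\Gamma})=1$, with $\widehat{\Gamma}$ closed by Corollary \ref{closed1}, repairs both points simultaneously and self-containedly: convexity becomes the triviality that an affine constraint cuts out a convex set, and closedness of $E(\eta)$ follows from the Portmanteau inequality for the closed set $\widehat{\Gamma}$, so compactness is genuinely established. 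The equivalence you rely on, namely that $\supp(\nu_{x})\subset\Gamma^{*}_{\eta}(x)$ for $m_{0}$-a.e.\ $x$ is the same as $\nu(\widehat{\Gamma})=1$, needs exactly the two facts you supply --- that $\nu_{x}$ is concentrated on $\Gamma_{T}(x)$ by the disintegration with respect to $e_{0}$, and that each $\Gamma^{*}_{\eta}(x)$ is closed (Lemma \ref{closedlemma} with constant sequences) --- so your argument is complete, and arguably tighter than the paper's at precisely the points the paper leaves to citation or assertion.
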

\begin{proof}
We, first, prove that given $m_{0} \in \PP_{\alpha}(\R^{d})$ for any $\eta \in \PP_{m_{0}}(\Gamma_{T}, R)$ the set $E(\eta)$ is non empty for some constant $R \geq [m_{0}]_{\alpha}$. 
Indeed, we have that by Corollary \ref{closed1} and \cite[Proposition 9.5]{bib:C} the set-valued map $x \rightrightarrows \Gamma_{\eta}^{*}(x)$ is measurable with closed values. Thus, by \cite[Theorem A 5.2]{bib:SC}, there exists a measurable selection $\tilde\gamma_{x} \in \Gamma_{\eta}^{*}(x)$, that is $\tilde\gamma_{x}(t)=\tilde\gamma(t,x,u^{*})$ for some $u^{*} \in L^{2}(0,T)$ solution of \eqref{min} associated with $\eta$. Define, now, the measure $\tilde\eta$ as follows
\begin{equation*}
\tilde\eta(A)=\int_{\mathbb{R}^{d}}{\delta_{\tilde\gamma_{x}}(A)\ m_{0}(dx)} \quad \text{for any}\ A \in \mathcal{B}(\Gamma_{T}).
\end{equation*}
Thus, we need to prove that $\tilde{\eta} \in \PP_{m_{0}}(\Gamma_{T}, R)$. Indeed, $e_{0} \sharp \tilde{\eta} = m_{0}$ by definition and 
\begin{align*}
\int_{\Gamma_{T}}{\| \dot\gamma \|_{2}^{\alpha}\ \tilde{\eta}(d\gamma)} 
=  \int_{\R^{d}}{ \| \dot{\tilde\gamma}_{x} \|_{2}^{\alpha}\ m_{0}(dx)}  \leq \int_{\R^{d}}{\tilde{C_{2}}^{\alpha}\left(1+|x| \right)^{\alpha}\ m_{0}(dx)},
\end{align*}
where the last inequality holds by Lemma \ref{equicontinuity}. 
Therefore, we deduce that
\begin{equation*}
\int_{\Gamma_{T}}{\| \dot\gamma \|_{2}^{\alpha}\ \tilde{\eta}(d\gamma)} \leq \tilde{C}_{2}^{\alpha} \left(\int_{\R^{d}}{|x|^{\alpha}\ m_{0}(dx)} +1 \right) \leq \tilde{C}_{2}^{\alpha}([m_{0}]_{\alpha}+1).
\end{equation*}
Hence, taking $R \geq R(\alpha, [m_{0}]_{\alpha})$, where $$R(\alpha, [m_{0}]_{\alpha}):=  \tilde{C}_{2}^{\alpha}([m_{0}]_{\alpha}+1)$$ we obtain that $\tilde{\eta} \in \PP_{m_{0}}(\Gamma_{T}, R)$. Consequently, that $E(\eta)$ is non-empty.
The proof of convexity is a straightforward application of \cite[Lemma 3.5]{bib:CC}. In conclusion, for any $\eta \in \PP_{m_{0}}(\Gamma_{T}, R)$ the sets $E(\eta)$ are compact, with respect to the $d_{1}$ distance, since $E(\eta) \subset \PP_{m_{0}}(\Gamma_{T}, R)$ which is compact.
\end{proof}


\begin{lemma}\label{mfgclosedgraph}
For any $R \geq R(\alpha, [m_{0}]_{\alpha})$, the set-valued map 
\begin{align*}
E: \big(\PP_{m_{0}}(\Gamma_{T}, R), d_{1} \big) & \rightrightarrows \big( \PP_{m_{0}}(\Gamma_{T}, R), d_{1} \big)
\\
\eta &\mapsto E(\eta)
\end{align*}
has closed graph.
\end{lemma}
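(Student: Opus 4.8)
The plan is to verify the closed-graph property directly from the definition. Let $\eta_i \rightharpoonup^* \eta$ in $\PP_{m_0}(\Gamma_T, R)$ and let $\nu_i \in E(\eta_i)$ with $\nu_i \rightharpoonup^* \nu$; I must show $\nu \in E(\eta)$. Since $\PP_{m_0}(\Gamma_T, R)$ is compact, $\nu$ already belongs to it, so the only thing left to check is the support condition $\supp(\nu_x) \subset \Gamma^*_\eta(x)$ for $m_0$-a.e.\ $x$. The key reduction is to introduce the \emph{optimal set}
$$\Sigma_\eta := \big\{ \gamma \in \Gamma_T : \gamma \in \Gamma^*_\eta(\gamma(0)) \big\},$$
and to prove $\supp(\nu) \subset \Sigma_\eta$; the disintegration $\nu(d\gamma) = \int_{\R^d} \nu_x(d\gamma)\, m_0(dx)$ from Theorem \ref{disintegration} will then transfer this property down to the fibres.

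First I would record that $\Sigma_\eta$, and likewise each $\Sigma_{\eta_i}$, is closed in $\Gamma_T$: this is exactly Lemma \ref{closedlemma} (equivalently Corollary \ref{closed1}) applied with the measure held fixed and with $x_j := \gamma_j(0) \to \gamma(0)$ whenever $\gamma_j \to \gamma$ uniformly. Next, since $\nu_i \in E(\eta_i)$ means $\supp(\nu_{i,x}) \subset \Gamma^*_{\eta_i}(x)$ for $m_0$-a.e.\ $x$, and since on $\supp(\nu_{i,x})$ one has $\gamma(0) = x$, the disintegration gives $\nu_i(\Sigma_{\eta_i}) = 1$, and closedness of $\Sigma_{\eta_i}$ yields $\supp(\nu_i) \subset \Sigma_{\eta_i}$.

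The heart of the argument is the passage to the limit. Fix $\gamma \in \supp(\nu)$. By weak-$*$ convergence $\nu_i \rightharpoonup^* \nu$, every open neighbourhood of $\gamma$ is eventually charged by the $\nu_i$, so there exist, along a subsequence, curves $\gamma_i \in \supp(\nu_i) \subset \Sigma_{\eta_i}$ with $\gamma_i \to \gamma$ in $\Gamma_T$. In particular $\gamma_i \in \Gamma^*_{\eta_i}(\gamma_i(0))$, and setting $x_i := \gamma_i(0) \to \gamma(0)$, Lemma \ref{closedlemma} applies verbatim to give $\gamma \in \Gamma^*_\eta(\gamma(0))$, i.e.\ $\gamma \in \Sigma_\eta$. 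Hence $\supp(\nu) \subset \Sigma_\eta$. Finally, $\nu(\Sigma_\eta) = 1$ together with the disintegration forces $\nu_x(\Sigma_\eta) = 1$ for $m_0$-a.e.\ $x$; intersecting with the fibre $\{\gamma : \gamma(0) = x\}$, on which $\Sigma_\eta$ reduces to $\Gamma^*_\eta(x)$, and using that $\Gamma^*_\eta(x)$ is closed (Corollary \ref{closed1}), I conclude $\supp(\nu_x) \subset \Gamma^*_\eta(x)$, that is $\nu \in E(\eta)$.

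I expect the main obstacle to be the support-convergence step: the inclusion $\supp(\nu) \subset \liminf_i \supp(\nu_i)$ in the Kuratowski sense along a subsequence must be invoked carefully, since weak-$*$ convergence controls only the mass of open sets and not supports directly. The genuine analytic substance, namely the lower semicontinuity of the cost under joint convergence of $(x_i, \eta_i, \gamma_i)$, is already packaged in Lemma \ref{closedlemma}, so the remaining effort is purely the measure-theoretic bookkeeping linking the supports of the $\nu_i$, their disintegrations, and the closed sets $\Sigma_{\eta_i}$.
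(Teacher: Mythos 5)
Your proof is correct, and it takes essentially the route that the paper itself compresses into a one-line citation of \cite[Lemma 3.6]{bib:CC}: closedness of the optimal-trajectory correspondence jointly in $(x,\eta)$ (your invocation of Lemma \ref{closedlemma} and Corollary \ref{closed1}), Kuratowski lower convergence of supports under weak-$*$ convergence, and the Disintegration Theorem \ref{disintegration} to pass between $\nu$ and its fibres $\nu_x$. In effect you have supplied the details the paper outsources, and each step (closedness of $\Sigma_{\eta}$, the identity $\nu_i(\Sigma_{\eta_i})=1$ forcing $\supp(\nu_i)\subset\Sigma_{\eta_i}$, the limit passage via Lemma \ref{closedlemma}, and the fibrewise conclusion $\supp(\nu_x)\subset\Gamma^*_{\eta}(x)$) is sound.
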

\begin{proof}
The proof of this Lemma is a straightforward application of \cite[Lemma 3.6]{bib:CC}.
\end{proof}


\begin{theorem}[{\bf Existence of Mean Field Games equilibria}]\label{existenceofmfg}
Let $R \geq R(\alpha, [m_{0}]_{\alpha})$, where $R(\alpha, [m_{0}]_{\alpha})$ is defined as in Lemma \ref{convex}. Then, the set-valued map $E$ has a fixed point.	
\end{theorem}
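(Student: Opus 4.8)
The plan is to apply the Kakutani--Fan--Glicksberg fixed point theorem (cited as \cite[Corollary 17.55]{bib:AB}) to the set-valued map $E$ on the domain $\PP_{m_{0}}(\Gamma_{T}, R)$. To invoke this theorem, I must verify four hypotheses: that the domain is a nonempty, convex, compact subset of a locally convex Hausdorff topological vector space; that $E$ takes nonempty convex values; that $E$ takes closed (indeed compact) values; and that $E$ has closed graph (equivalently, that $E$ is upper hemicontinuous with closed values). The strategy is simply to collect the preparatory lemmas already established and check them off against this list.

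First I would note that $\PP_{m_{0}}(\Gamma_{T}, R)$, viewed inside the space of finite signed measures on $\Gamma_{T}$ with the topology of weak-$*$ convergence (metrized by $d_{1}$ on this set), is a convex, compact, nonempty subset: compactness and tightness were established in the discussion preceding Remark \ref{empty}, nonemptiness follows from Remark \ref{empty} (using $R \geq R(\alpha,[m_{0}]_{\alpha}) \geq \left(\|A\|e^{T\|A\|}\right)^{\alpha}[m_{0}]_{\alpha}$ after possibly enlarging the threshold constant), and convexity is immediate since the defining constraints $e_{0}\sharp\eta = m_{0}$ and $\int_{\Gamma_{T}}\|\dot\gamma\|_{2}^{\alpha}\,\eta(d\gamma) \leq R$ are preserved under convex combinations. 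The space $\PP(\Gamma_{T})$ sits inside the locally convex space of measures with the weak-$*$ topology, so the ambient structural requirement is met.

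Next I would invoke Lemma \ref{convex}, which for $R \geq R(\alpha,[m_{0}]_{\alpha})$ guarantees in one stroke that $E(\eta)$ is nonempty, convex, and compact for every $\eta \in \PP_{m_{0}}(\Gamma_{T}, R)$. Finally, Lemma \ref{mfgclosedgraph} supplies the closed-graph property of $E$ for the same range of $R$; since the target space $\PP_{m_{0}}(\Gamma_{T},R)$ is compact, a closed graph is equivalent to upper hemicontinuity with closed values, which is precisely the continuity hypothesis demanded by Kakutani--Fan--Glicksberg. With all hypotheses verified, the theorem yields an $\eta \in \PP_{m_{0}}(\Gamma_{T},R)$ with $\eta \in E(\eta)$, and as observed right before Lemma \ref{closedlemma}, such a fixed point is exactly a Mean Field Games equilibrium, completing the proof.

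Since every substantive ingredient has been isolated into the preceding lemmas, I do not expect a genuine obstacle here; the only point requiring mild care is the bookkeeping that the topological vector space hosting $\PP_{m_{0}}(\Gamma_{T},R)$ is locally convex Hausdorff and that $d_{1}$-convergence on this set coincides with the weak-$*$ convergence used in Lemmas \ref{convex} and \ref{mfgclosedgraph} (this coincidence is exactly Proposition \ref{measureregularity}(2)). Ensuring this compatibility between the metric $d_{1}$ and the topology underlying the fixed point theorem is the single step where I would be most careful, so that the hypotheses of \cite[Corollary 17.55]{bib:AB} are literally, and not merely morally, satisfied.
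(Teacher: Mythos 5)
Your proposal is correct and follows essentially the same route as the paper: the paper's own proof simply invokes Kakutani's fixed point theorem (having announced the Kakutani--Fan--Glicksberg version from \cite[Corollary 17.55]{bib:AB} before Lemma \ref{closedlemma}), with nonemptiness, convexity and compactness of the values supplied by Lemma \ref{convex}, the closed-graph property by Lemma \ref{mfgclosedgraph}, and compactness of $\PP_{m_{0}}(\Gamma_{T},R)$ by the tightness argument preceding Remark \ref{empty}. Your write-up merely makes explicit the bookkeeping (convexity of the domain, the compatibility of $d_{1}$ with weak-$*$ convergence via Proposition \ref{measureregularity}) that the paper leaves implicit.
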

\begin{proof}
By the above lemmas the assumptions of Kakutani's fixed point theorem (see, for instance, \cite{bib:BK}) are satisfied and therefore, there exists a fixed point of the map $E$, that is $\bar{\eta} \in E(\bar{\eta})$ and $\bar{\eta}$ is a Mean Field Games equilibrium.
\end{proof}


At this point, for $\alpha > 1$ fix $m_{0} \in \PP_{\alpha}(\R^{d})$ and $R \geq R(\alpha, [m_{0}]_{\alpha})$, where $R(\alpha, [m_{0}]_{\alpha})$ is defined as in Lemma \ref{convex}. Thus, by Theorem \ref{existenceofmfg} we have that there exists at least one Mean Field Games equilibrium $\eta \in \PP_{m_{0}}(\Gamma_{T}, R)$.

From now on, we denote by $\gamma(s;t,x,u)$ the solution to the following control system
\begin{align}\label{dyn2}
\begin{split}
\begin{cases}
\dot\gamma(s)=A\gamma(s)+Bu(s), \quad s \in [t,T]
\\
\gamma(t)=x,
\end{cases}
\end{split}
\end{align}
where $u: [t,T] \to \R^{k}$ belongs to $L^{2}(t,T; \R^{k})$. Moreover, we introduce the following notation
\begin{equation}\label{notation}
m^{\eta}_{t}=e_{t} \sharp \eta,
\end{equation}
for any $\eta \in \PP_{m_{0}}(\Gamma_{T}, R)$.

\medskip
\begin{definition}[{\bf Mild solutions of Mean Field Games problem}]\label{mildsolution}
We say that $(V,m) \in C([0,T] \times \mathbb{R}^{d}) \times C([0,T], \mathcal{P}_{\alpha}(\mathbb{R}^{d}))$ is a mild solution for the Mean Field Games problem if there exists a Mean Field Games equilibrium $\eta \in \mathcal{P}_{m_{0}}(\Gamma_{T})$ such that
\begin{itemize}
\item[($i$)] $m_{t}=m^{\eta}_{t}$ for all $t \in [0,T]$;
\item[($ii$)] $V$ can be represented as the value function of the optimal control problem \ref{min}, that is
\begin{equation}\label{mfgvaluefunction}
	V(t,x)=\inf_{u \in L^{2}(0,T;\ \mathbb{R}^{k})}\left\{\int_{t}^{T}{L(\gamma(s; t,x,u), u(s), m^{\eta}_{s})\ ds}+G(\gamma(T;t,x,u), m^{\eta}_{T})\right\}
\end{equation}
for all $(t,x) \in [0,T]\times \mathbb{R}^{d}$.
\end{itemize}	
\end{definition}

Note that the above definition is well-posed since we have proved so far that there exists at least one Mean Field Games equilibrium and the map 
\begin{align*}
[0,T] & \to \PP_{\alpha}(\R^{d})
\\
t & \mapsto e_{t} \sharp \eta
\end{align*}
is continuous with respect to $d_{1}$. Moreover, for the same reasons we know that there exists at least one mild solution of the Mean Field Games problem.

In order to study the uniqueness of the Mean Field Games equilibrium, we focus the attention on a particular Lagrangian function, that is
\begin{equation}\label{splittedlagrangian}
L(x,u,m):=\ell (x,u)+F(x, m),
\end{equation}
where $\ell$ and $F$ satisfy the assumptions {\bf (L1)}--{\bf (L3)}.

\begin{definition}[{\bf Monotonicity}]\label{strictmonotonicity}
We say that $\Psi: \mathbb{R}^{d} \times \mathcal{P}(\mathbb{R}^{d}) \to \mathbb{R}$ is monotone if
\begin{equation}\label{mon}
\int_{\mathbb{R}^{d}}{\Big(\Psi(x, m_{1})-\Psi(x,m_{2}) \Big)\ (m_{1}-m_{2})(dx)} \geq 0,	
\end{equation}
for all $m_{1},m_{2} \in \mathcal{P}(\mathbb{R}^{d})$.

We say that $\Psi$ is strictly monotone if \eqref{mon} holds true and 
\begin{equation*}
\int_{\mathbb{R}^{d}}{\Big(\Psi(x, m_{1})-\Psi(x,m_{2}) \Big)\ (m_{1}-m_{2})(dx)} = 0 \iff F(x,m_{1})=F(x,m_{2}),\ \forall\ x \in \R^{d}.
\end{equation*}
\end{definition}

\medskip
\begin{theorem}[{\bf Uniqueness of mild solutions}]\label{uniquenessmfg}
Let $F$ and $G$ be strictly monotone. Then, for any Mean Field Games equilibria $\eta_{1}$ and $\eta_{2}$ in $\PP_{m_{0}}(\Gamma_{T}, R)$ we have that the associated functionals $J_{\eta_{1}}$ and $J_{\eta_{2}}$ are equal.

Consequently, if $(V_{1}, m_{1})$ and $(V_{2}, m_{2})$ are two mild solutions associated with the Mean Field Games equilibria $\eta_{1}$ and $\eta_{2}$, then $V_{1}=V_{2}$.
\end{theorem}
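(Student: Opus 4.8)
The plan is to adapt the classical Lasry--Lions monotonicity argument to the Lagrangian formulation. Write $m^{\eta_1}_t := e_t \sharp \eta_1$ and $m^{\eta_2}_t := e_t \sharp \eta_2$ for the two induced flows, and for a trajectory $\gamma \in \Gamma_{T}$ generated by a control $u$ abbreviate $J_{\eta}(\gamma) := J_{\eta}(\gamma(0), u)$. Because of the split structure \eqref{splittedlagrangian}, the running part $\ell(x,u)$ is independent of the measure, so for every such $\gamma$
\[
J_{\eta_1}(\gamma) - J_{\eta_2}(\gamma) = \int_0^T \big(F(\gamma(t), m^{\eta_1}_t) - F(\gamma(t), m^{\eta_2}_t)\big)\,dt + G(\gamma(T), m^{\eta_1}_T) - G(\gamma(T), m^{\eta_2}_T).
\]
The heart of the argument is to prove that the quantity $\mathcal{I} := \int_{\Gamma_{T}} \big(J_{\eta_1}(\gamma) - J_{\eta_2}(\gamma)\big)\,(\eta_1 - \eta_2)(d\gamma)$ is simultaneously $\geq 0$ and $\leq 0$, hence $\mathcal{I} = 0$.

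For the lower bound I would use monotonicity. Exchanging the order of integration (legitimate since $G \in C_b$ and $F$ is controlled by the uniform $\alpha$-moment bound \eqref{momt}, with $\alpha > 1$) and pushing forward by $e_t$ and $e_T$, the identity $e_t \sharp \eta_i = m^{\eta_i}_t$ turns the $F$-contribution of $\mathcal{I}$ into $\int_0^T \int_{\R^d} (F(x, m^{\eta_1}_t) - F(x, m^{\eta_2}_t))\,(m^{\eta_1}_t - m^{\eta_2}_t)(dx)\,dt$ and the $G$-contribution into $\int_{\R^d} (G(x, m^{\eta_1}_T) - G(x, m^{\eta_2}_T))\,(m^{\eta_1}_T - m^{\eta_2}_T)(dx)$. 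Both integrals are nonnegative because $F$ and $G$ are monotone, whence $\mathcal{I} \geq 0$.

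For the upper bound I would use optimality together with the Disintegration Theorem \ref{disintegration}. Since $e_0 \sharp \eta_i = m_0$, disintegrate $\eta_i = \int_{\R^d} \eta_{i,x}\,m_0(dx)$ with $\supp(\eta_{i,x}) \subset \Gamma^{*}_{\eta_i}(x)$. Regroup $\mathcal{I} = \big[\int J_{\eta_1}\,d\eta_1 - \int J_{\eta_1}\,d\eta_2\big] + \big[\int J_{\eta_2}\,d\eta_2 - \int J_{\eta_2}\,d\eta_1\big]$. For $m_0$-a.e. $x$ every curve in $\supp(\eta_{1,x})$ is $J_{\eta_1}$-optimal from $x$, so $\int J_{\eta_1}\,d\eta_1 = \int_{\R^d} V_1(0,x)\,m_0(dx)$, while every curve in $\supp(\eta_{2,x})$ merely starts at $x$ and hence satisfies $J_{\eta_1}(\gamma) \geq V_1(0,x)$, giving $\int J_{\eta_1}\,d\eta_2 \geq \int_{\R^d} V_1(0,x)\,m_0(dx)$; thus the first bracket is $\leq 0$. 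The symmetric estimate (swapping the indices $1$ and $2$) makes the second bracket $\leq 0$, so $\mathcal{I} \leq 0$ and therefore $\mathcal{I} = 0$.

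Equality forces both nonnegative integrals from the second step to vanish: $\int_{\R^d}(F(x,m^{\eta_1}_t) - F(x,m^{\eta_2}_t))(m^{\eta_1}_t - m^{\eta_2}_t)(dx) = 0$ for a.e. $t$ and the analogous $G$-identity at $t = T$. By strict monotonicity of $F$ and $G$ this yields $F(\cdot, m^{\eta_1}_t) = F(\cdot, m^{\eta_2}_t)$ for a.e. $t$ and $G(\cdot, m^{\eta_1}_T) = G(\cdot, m^{\eta_2}_T)$; hence the running and terminal costs coincide, so $J_{\eta_1}(x,u) = J_{\eta_2}(x,u)$ for all $(x,u)$, and taking the infimum over controls gives $V_1 = V_2$ on $[0,T] \times \R^d$. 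I expect the main obstacle to be the measure-theoretic bookkeeping of the upper bound --- namely justifying the Fubini exchange and the selection identity $\int J_{\eta_1}\,d\eta_1 = \int V_1(0,\cdot)\,dm_0$ rigorously through the disintegration, together with confirming integrability of all terms via the moment bound \eqref{momt} and the growth estimates of Remark \ref{assumptions}.
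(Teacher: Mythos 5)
Your proposal is correct and follows essentially the same route as the paper: the authors omit the proof and refer to \cite[Theorem 4.1]{bib:CC}, which is exactly this Lasry--Lions monotonicity argument---bounding $\mathcal{I}=\int_{\Gamma_T}(J_{\eta_1}-J_{\eta_2})\,d(\eta_1-\eta_2)$ from below by monotonicity of $F$ and $G$ after pushing forward by $e_t$, and from above by the disintegration/optimality estimate, then invoking strict monotonicity to conclude $J_{\eta_1}=J_{\eta_2}$ and hence $V_1=V_2$. Your observation that the split structure \eqref{splittedlagrangian} makes the difference $J_{\eta_1}(\gamma)-J_{\eta_2}(\gamma)$ independent of the control generating $\gamma$ is precisely the point that makes the bookkeeping go through.
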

We omit the proof of the Theorem \ref{uniquenessmfg} which is similar to the one of \cite[Theorem 4.1]{bib:CC}.


\section{Further regularity of mild solutions}
Throughout this section, given $\alpha > 1$ fix $m_{0} \in \PP_{\alpha}(\R^{d})$ and $R \geq R(\alpha, [m_{0}]_{\alpha})$, where $R(\alpha, [m_{0}]_{\alpha})$ in defined as in Lemma \ref{convex}. At this point, we know that under assumptions {\bf (L1)}--{\bf (L3)} by Theorem \ref{existenceofmfg} there exists at least one Mean Field Games equilibrium $\eta \in \PP_{m_{0}}(\Gamma_{T}, R)$. Furthermore, if the Lagrangian $L$ is of the form \eqref{splittedlagrangian}, the coupling function $F$ and the terminal costs $G$ satisfy the strict monotonicity assumption, see Definition \ref{strictmonotonicity}, then the Mean Field Games equilibrium is unique. For this reasons, from now on we fix $R \geq R(\alpha, [m_{0}]_{\alpha})$.

Now, we are going to prove that any Mean Field Games equilibrium generates a family of probability measures $\{ m^{\eta}_{t}\}_{t \in [0,T]}$ which is $\frac{1}{2}$-H\"older continuous in time. Consequently, any mild solution $(V, m^{\eta})$ is such that the value function $V$ is locally Lipschitz continuous and locally fractionally semiconcave on $[0,T] \times \R^{d}$. Moreover, we will prove that there exists at least one Mean Field Games equilibrium $\eta \in \PP_{\alpha}(\Gamma_{T}, R)$ such that $t \to m^{\eta}_{t}$ is Lipschitz continuous.

Given the control system \eqref{dyn}, we have that the Hamiltonian associated with the Lagrangian function $L$ is defined as
\begin{equation*}
H(x,p,m)=\sup_{u \in \R^{k}} \Big\{-\langle p, Ax+Bu \rangle - L(x,u,m) \Big\}.
\end{equation*}

The Hamiltonian $H$ can be explicitly written as follows
\begin{equation}\label{hamiltonianfun}
H(x,p,m)=-\langle p, Ax \rangle + |B^{\star}p|^{2}-L(x, -B^{\star}p, m),
\end{equation}
for any $(x,p,m) \in \R^{d} \times \R^{k} \times \PP_{\alpha}(\R^{d})$. Moreover, it is easy to check that there exists a constant $c_{2} \geq 0 $ such that for any $(x,p,m) \in \R^{d} \times \R^{k} \times \PP_{\alpha}(\R^{d})$
\begin{equation}\label{hamiltoniangrowth}
|D_{p}H(x,p,m)| \leq c_{2} (1+ |x|+|p|).
\end{equation}


\subsection{Local Lipschitz continuity and local fractional semiconcavity of the Value function}

Let $(V, m^{\eta})$ a mild solution of the Mean Field Games problem associated with a Mean Field Games equilibrium $\eta \in \PP_{\alpha}(\Gamma_{T}, R)$. 

In this section, we prove that, given any equilibrium $\eta$, the associated measures $\{ m^{\eta}_{t}\}_{t \in [0,T]}$ are H\"older continuous and consequently, that the associated value function is locally semiconcave on $[0,T] \times \R^{d}$, linearly in space and with a fractional modulus of semiconcavity in time. Moreover, we show that, for any equilibrium $\eta \in \PP_{\alpha}(\Gamma_{T}, R)$, the value function $V$ is locally Lipschitz continuous on $[0,T] \times \R^{d}$.

We conclude this section proving that, under some extra assumptions on the data, there exists at least one equilibrium $\eta \in \PP_{\alpha}(\Gamma_{T}, R)$ such that $\{m^{\eta}_{t}\}_{t \in [0,T]}$ is Lipschitz continuous in time.

We recall that $V$ is defined as the value function 
\begin{equation*}
V(t,x)= \inf_{u \in L^{2}(0,T;\ \R^{k})} \left\{\int_{t}^{T}{L(\gamma(s; t,x,u), u(s), m^{\eta}_{s})\ ds} + G(\gamma(T; t,x,u), m^{\eta}_{T}) \right\}.
\end{equation*}

\begin{theorem}[{\bf H\"older continuity of equilibria}]\label{holder}
Given any Mean Field Games equilibrium $\eta$, the map $ t \to m^{\eta}_{t}$ is $\frac{1}{2}$-H\"older continuous in time.
\end{theorem}
\begin{proof}
By definition of $d_{1}$, we have that 
\begin{align*}
& d_{1}(m^{\eta}_{t}, m^{\eta}_{s}) = \inf_{\varphi \in \text{Lip}_{1}(\R^{d})} \int_{\R^{d}}{\varphi(x)(m^{\eta}_{t}-m^{\eta}_{s})(dx)}
\\
 = &  \inf_{\varphi \in \text{Lip}_{1}(\R^{d})}\int_{\Gamma_{T}}{(\varphi(\gamma(t))-\varphi(\gamma(s))\eta(d\gamma)} \leq  \int_{\Gamma_{T}}{|\gamma(t)-\gamma(s)|\eta(d\gamma)},
\end{align*}
where $\text{Lip}_{1}(\R^{d})$ is the set of Lipschitz continuous functions such that the Lipschitz constant is equal to $1$.

We recall that, since $\eta$ is a Mean Field Games equilibrium, we know that it is supported on the set of all minimizing curves of problem \eqref{min} and therefore, by Lemma \ref{equicontinuity} and recalling that $x=\gamma(0)$ we have that 
\begin{align*}
& d_{1}(m^{\eta}_{t}, m^{\eta}_{s}) \leq \int_{\Gamma_{T}}{|\gamma(t)-\gamma(s)|\eta(d\gamma)} 
\\
\leq & |t-s|^{\frac{1}{2}} \int_{\Gamma_{T}}{\left(\| A\|^{\frac{1}{2}} T^{\frac{1}{2}}\tilde{C}\big(1+|x|\big) + \| B\|^{\frac{1}{2}} K\right)\ \eta(d\gamma)}=\kappa([m_{0}]_{\alpha}])|t-s|^{\frac{1}{2}},
\end{align*}
where the constant $\kappa$ depends on the moment of $m_{0}$ which we know is bounded by construction.  
Thus, the proof is complete. 
\end{proof}

In order to prove the semiconcavity of the value function $V$, we need to add the following assumption on the Lagrangian $L$ and terminal cost $G$:
\begin{itemize}
\item[{\bf (L4)}] There exists two constants $w_{L} \geq 0 $ and $w_{G} \geq 0$ such that for any $\lambda \in [0,1]$, any radius $R>0$, any $u \in \R^{k}$, any $x_{0}$, $ x_{1}) \in B_{R}$, and any $m \in \PP_{1}(\R^{d})$ such that 
\begin{align*}
& \lambda L(x_{0}, u, m)+(1-\lambda)L(x_{1}, u, m) -L(\lambda x_{0} + (1-\lambda)x_{1}, u, m)  \leq w_{L}\lambda(1-\lambda) |x_{0}-x_{1}|^{2}, 
\\ \\
& \lambda G(x_{0}, m)+(1-\lambda)G(x_{1}, m) -G(\lambda x_{0} + (1-\lambda)x_{1}, m)  \leq w_{G}\lambda(1-\lambda) |x_{0}-x_{1}|^{2}.
\end{align*}
\end{itemize}

\begin{theorem}[{\bf Local fractional semiconcavity of $V$}]\label{Vsemiconcavity}
Let $R$ be a positive radius. Then, there exists a constant $\Lambda \geq 0$ such that for any $(t,x) \in [0,T] \times \overline{B}_{R}$, any $(h,\delta) \in \R \times \R$ such that $(x+h,t+\delta) \in [0,T] \times \overline{B}_{R}$ and $(x-h, t-\delta) \in [0,T] \times \overline{B}_{R}$ we have that
\begin{equation*} 
V(t+\delta, x+h)+V(t-\delta,x-h)-2V(t,x) \leq \Lambda\left(|h|^{2}+|\delta|^{\frac{3}{2}}\right).
\end{equation*}
\end{theorem}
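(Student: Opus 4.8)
The plan is to prove the one‑sided (semiconcavity) inequality by comparing the optimal cost at the centre $(t,x)$ with two sub‑optimal costs built at the perturbed data. Since $V$ is an infimum it suffices to produce admissible controls at $(t+\delta,x+h)$ and $(t-\delta,x-h)$ whose costs $\mathrm{Cost}_{+}$, $\mathrm{Cost}_{-}$ obey $\mathrm{Cost}_{+}+\mathrm{Cost}_{-}-2V(t,x)\le\Lambda(|h|^{2}+|\delta|^{3/2})$; up to replacing $(\delta,h)$ by $(-\delta,-h)$ (under which the left‑hand side of the statement is invariant) I may assume $\delta\ge0$, and the hypotheses force $[t-\delta,t+\delta]\subset[0,T]$. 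Let $\gamma^{*}$ be optimal for $(t,x)$ with control $u^{*}$. On the shorter interval $[t+\delta,T]$ I keep the control $u^{*}$ with datum $\gamma^{+}(t+\delta)=x+h$, so by linearity $\gamma^{+}(s)=\gamma^{*}(s)+e^{(s-t)A}h'$ with $h'=e^{-\delta A}\big((x+h)-\gamma^{*}(t+\delta)\big)$. On the longer interval $[t-\delta,T]$ I insert a boundary layer: on $[t-\delta,t]$ I run the frozen constant control $u^{*}(t)$ from $x-h$, reaching $\gamma^{-}(t)=:x+h''$, and on $[t,T]$ I again use $u^{*}$, giving $\gamma^{-}(s)=\gamma^{*}(s)+e^{(s-t)A}h''$ there.

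First I would record the regularity of the optimal synthesis, uniformly for $x\in\overline{B}_{R}$. By the maximum principle (Theorems \ref{maxprinciple} and \ref{Hmaxprinciple}) the pair $(\gamma^{*},p)$ solves the Hamiltonian system; using the coercivity {\bf (L3)}$(i)$ together with the bound on the costate coming from $-p(T)=D_{x}G(\gamma^{*}(T),m^{\eta}_{T})$, which is bounded by {\bf (L2)}, one gets $\|u^{*}\|_{\infty}+\|Bu^{*}\|_{\infty}\le C_{R}$. I would moreover note that the optimal velocity $Bu^{*}=\dot\gamma^{*}-A\gamma^{*}$ and $u^{*}$ inherit their time–modulus of continuity from $s\mapsto m^{\eta}_{s}$, hence are at worst $\tfrac12$‑Hölder in time by Theorem \ref{holder}; this is the only regularity beyond boundedness the argument uses. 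Back‑solving the linear flow then yields $\big|(x+h)-\gamma^{*}(t+\delta)-(h-\delta\dot\gamma^{*}(t))\big|\le C_{R}\delta^{3/2}$ and the analogous estimate for $\gamma^{-}(t)$, so that $h'=h-\delta\dot\gamma^{*}(t)+O(\delta|h|+\delta^{3/2})$ and $h''=-h+\delta\dot\gamma^{*}(t)+O(\delta|h|+\delta^{3/2})$. The decisive point is that the two velocity drifts $\pm\delta\dot\gamma^{*}(t)$, produced by the boundary layers on opposite sides of $t$, cancel, so that $\mu:=\tfrac12(h'+h'')$ satisfies $|\mu|\le C_{R}(\delta|h|+\delta^{3/2})$, while $|h'-h''|\le C_{R}(|h|+\delta)$.

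Then I would organise $\mathrm{Cost}_{+}+\mathrm{Cost}_{-}-2V(t,x)$ into three groups. The terminal group $G(\gamma^{+}(T))+G(\gamma^{-}(T))-2G(\gamma^{*}(T))$ and the bulk group $\int_{t}^{T}[L(\gamma^{+})+L(\gamma^{-})-2L(\gamma^{*})]\,ds$ (all with control $u^{*}$ and measure $m^{\eta}_{s}$) are pure second differences in the state, about $\gamma^{*}$, with displacements $e^{(s-t)A}h'$ and $e^{(s-t)A}h''$. Writing them through the midpoint $\gamma^{*}+e^{(s-t)A}\mu$, assumption {\bf (L4)} bounds the symmetric part by $w_{L}\int_{t}^{T}|e^{(s-t)A}(h'-h'')/2|^{2}\,ds$ plus the corresponding $w_{G}$ term, i.e. by $C(|h|^{2}+\delta^{2})$, whereas the remaining ``drift'' part is Lipschitz of order $|\mu|$, hence $\le C_{R}(\delta|h|+\delta^{3/2})$. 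The third group is the pair of boundary‑layer integrals $\int_{t-\delta}^{t}L(\gamma^{-},u^{*}(t),m^{\eta}_{s})\,ds-\int_{t}^{t+\delta}L(\gamma^{+},u^{*},m^{\eta}_{s})\,ds$; freezing state and control at time $t$, the leading part is $\delta\big(L(x-h,u^{*}(t),m^{\eta}_{t})-L(x+h,u^{*}(t),m^{\eta}_{t})\big)\le C_{R}\delta|h|$, the state/control‑freezing errors are $O(\delta^{2}+\delta^{3/2})$, and the error of replacing $m^{\eta}_{s}$ by $m^{\eta}_{t}$ is, by the Lipschitz condition {\bf (L1)} and Theorem \ref{holder}, at most $2Q_{L}\int_{|s-t|\le\delta}d_{1}(m^{\eta}_{s},m^{\eta}_{t})\,ds\le C\delta\cdot\delta^{1/2}=C\delta^{3/2}$. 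Collecting the three groups and absorbing each cross term $\delta|h|$ by Young's inequality gives the asserted bound $\Lambda(|h|^{2}+\delta^{3/2})$.

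The genuinely delicate points are the uniform (in $x\in\overline{B}_{R}$) boundedness and the $\tfrac12$‑Hölder time‑regularity of the optimal synthesis, and the bookkeeping of the first‑order expansions of $h'$ and $h''$: it is precisely the cancellation of the velocity drift $\delta\dot\gamma^{*}(t)$ between the two boundary layers that forbids an $O(\delta)$ contribution and reduces the temporal error to $O(\delta^{3/2})$ plus the measure term. That measure term, produced by the $\tfrac12$‑Hölder continuity of $t\mapsto m^{\eta}_{t}$ integrated over the two layers of length $\delta$, is what fixes the fractional exponent $3/2$ (rather than $2$) in time. I would emphasise that confining the perturbation to boundary layers is essential: any construction reparametrising time on the whole of $[t,T]$ would instead shift the measure argument globally and yield only the weaker exponent $1/2$.
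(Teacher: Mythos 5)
Your construction shares the paper's two key mechanisms (competitors driven by the optimal control $u^{*}$ of $(t,x)$, and the $\tfrac12$-H\"older continuity of $s\mapsto m^{\eta}_{s}$ from Theorem \ref{holder} as the source of the exponent $3/2$), but there is a genuine gap at the point you yourself flag as decisive: the claim that $u^{*}$ and $\dot\gamma^{*}=A\gamma^{*}+Bu^{*}$ are $\tfrac12$-H\"older in time. The only channel through which $u^{*}$ can inherit regularity from the measure flow is the maximum condition $D_{u}L(\gamma^{*}(s),u^{*}(s),m^{\eta}_{s})=-B^{\star}p(s)$, and assumption {\bf (L1)} makes $L$ itself, not $D_{u}L$, Lipschitz in $m$. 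Strong convexity {\bf (L3)}$(i)$ then gives only the square root of that modulus: if $u_{i}$ minimizes $f_{i}(u)=L(x,u,m_{i})+\langle B^{\star}p,u\rangle$, then
\begin{equation*}
\tfrac{1}{2C_{0}}|u_{1}-u_{2}|^{2}\le f_{1}(u_{2})-f_{1}(u_{1})\le 2\|f_{1}-f_{2}\|_{\infty}\le 2Q_{L}\,d_{1}(m_{1},m_{2}),
\end{equation*}
so that along the trajectory
\begin{equation*}
|u^{*}(s)-u^{*}(s')|\le C\left(|s-s'|+d_{1}(m^{\eta}_{s},m^{\eta}_{s'})^{\frac12}\right)\le C\left(|s-s'|+|s-s'|^{\frac14}\right),
\end{equation*}
i.e.\ only $\tfrac14$-H\"older regularity; this square-root loss is sharp under {\bf (L1)}--{\bf (L4)} (only for the splitted Lagrangian \eqref{splittedlagrangian}, where $D_{u}L$ does not depend on $m$, would your claim hold, but the theorem is stated for general $L$). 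With $\tfrac14$-H\"older regularity your two critical error terms degrade: the drift cancellation yields $|\mu|=\tfrac12|h'+h''|\le C(\delta|h|+\delta^{5/4})$ instead of $C(\delta|h|+\delta^{3/2})$, and the control-freezing error in the boundary layer is likewise $O(\delta^{5/4})$. Since $\delta^{5/4}$ is not $O(|h|^{2}+\delta^{3/2})$ as $\delta\to 0$, your argument as written proves only the weaker bound $\Lambda\left(|h|^{2}+|\delta|^{5/4}\right)$.

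The gap is repairable inside your scheme, and the repair is exactly the device the paper uses: never freeze the control at a single time, replay it instead. If on $[t-\delta,t]$ you run $\tilde u(s):=u^{*}(s+\delta)$ (or, as in the paper's proof, use the half-speed reparametrization $\bar u(s)=u^{*}\big(\tfrac{t+\delta+s}{2}\big)$ on $[t-\delta,t+\delta]$ combined with the dynamic programming principle \eqref{DPP}), then the drifts cancel identically at the level of the integral $\int_{t}^{t+\delta}Bu^{*}(\tau)\,d\tau$, giving $|h'+h''|\le C(\delta|h|+\delta^{2})$ with no pointwise time-regularity of $u^{*}$ at all; moreover, in the difference of running costs the same control value $u^{*}(s)$ appears in both terms, so the only fractional contribution left is the measure mismatch $\int_{t}^{t+\delta}d_{1}(m^{\eta}_{s-\delta},m^{\eta}_{s})\,ds\le \kappa\,\delta^{3/2}$, precisely the paper's estimate \eqref{semi2}. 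With that single substitution the rest of your bookkeeping (the {\bf (L4)} estimate on the symmetric second difference, the Lipschitz bound on the drift part, and Young's inequality on the $\delta|h|$ cross terms) goes through and recovers the stated exponent.
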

\begin{proof}
We first prove that the value function $V$ is locally semiconcave in space uniformly in time and then, that it is locally semiconcave in space and time.

Let $R>0$ be a positive radius and fix $(t,x) \in [0,T] \times \overline{B}_{R}$. Let $h \in \R^{d}$ be such that $x+h$, $x-h \in \overline{B}_{R}$ and let $u^{*} \in L^{2}$ be an optimal control for $(t,x) \in [0,T] \times \overline{B}_{R}$. Then, define the following curves
\begin{align*}
\gamma(s)= & \gamma(s; t,x,u^{*}), \quad s \in [t,T]
\\
\gamma_{+}(s)= & \gamma(s; t,x+h,u^{*}), \quad s \in [t,T]
\\
\gamma_{-}(s)= & \gamma(s; t,x-h,u^{*}), \quad s \in [t,T].
\end{align*}
Thus, we have that 
\begin{align}\label{ssc}
\begin{split}
& V(t,x+h)+V(t,x-h)-2V(t,x) 
\\
\leq & \int_{t}^{T}{\Big(L(\gamma_{+}(s), u^{*}(s), m^{\eta}_{s}) + L(\gamma_{-}(s), u^{*}(s), m^{\eta}_{s}) - 2L(\gamma(s), u^{*}(s), m^{\eta}_{s}) \Big)\ ds} 
\\
+ & G(\gamma_{+}(T), m^{\eta}_{T}) + G(\gamma_{-}(T), m^{\eta}_{T}) - 2G(\gamma(T), m^{\eta}_{T}).
\end{split}
\end{align}
Consider, first, the expression involving only the terminal costs
\begin{align*}
& G(\gamma_{+}(T), m^{\eta}_{T}) + G(\gamma_{-}(T), m^{\eta}_{T}) - 2G(\gamma(T), m^{\eta}_{T}) 
\\
=\ & G(\gamma_{+}(T), m^{\eta}_{T}) + G(\gamma_{-}(T), m^{\eta}_{T}) -2G\left(\frac{\gamma_{+}(T)+\gamma_{-}(T)}{2}, m^{\eta}_{T}\right) 
\\
+\ & 2G\left(\frac{\gamma_{+}(T)+\gamma_{-}(T)}{2}, m^{\eta}_{T}\right) - 2G(\gamma(T), m^{\eta}_{T}).
\end{align*}
By assumptions {\bf (L1)} and {\bf (L4)} we deduce that
\begin{align*}
& G(\gamma_{+}(T), m^{\eta}_{T}) + G\left(\gamma_{-}(T), m^{\eta}_{T}\right) -2G\left(\frac{\gamma_{+}(T)+\gamma_{-}(T)}{2}, m^{\eta}_{T}\right) \leq w_{G} |\gamma_{+}(T)-\gamma_{-}(T)|^{2},
\\
& 2G\left(\frac{\gamma_{+}(T)+\gamma_{-}(T)}{2}, m^{\eta}_{T}\right) - 2G\left(\gamma(T), m^{\eta}_{T}\right) \leq \| G \|_{\infty} |\gamma_{+}(T)+\gamma_{-}(T)-2\gamma(T)|.
\end{align*}
By the definition of $\gamma$, $\gamma_{+}$ and $\gamma_{-}$ we have that these curves are solutions of \eqref{dyn}. Therefore, we get that there exists a real positive constant $W$ such that
\begin{align*}
& |\gamma_{+}(T)-\gamma_{-}(T)|^{2} \leq W|h|^{2},
\\
&  |\gamma_{+}(T)+\gamma_{-}(T)-2\gamma(T)| \leq W|h|^{2}. 
\end{align*}
Hence, we deduce that
\begin{equation*}
G(\gamma_{+}(T), m^{\eta}_{T}) + G(\gamma_{-}(T), m^{\eta}_{T}) - 2G(\gamma(T), m^{\eta}_{T}) \leq W\left(w_{G}+\|G\|_{\infty} \right)|h|^{2}.
\end{equation*}
By almost similar arguments, one can prove that also the integral term in \eqref{ssc} is bounded by a constant times $|h|^{2}$. This proves that $V$ is locally semiconcave in space uniformly in time.

We prove now that $V$ is locally semiconcave on $[0,T] \times \R^{d}$.
Fix $(t,x) \in [0,T] \times \overline{B}_{R}$ and let $h \in \R^{d}$ and $\delta \in \R$ be such that $x+h$, $x-h \in \overline{B}_{R}$ and $0 < t-\delta < t+\delta<T$. Let $u^{*}$ be an optimal control for $(t,x)$ and define the following control function in $L^{2}$
\begin{equation*}
\bar{u}(s)=u^{*}\left(\frac{t+\delta+s}{2} \right), \quad s \in [t-\delta, t+\delta].
\end{equation*}
By the Dynamic Programming Principle \eqref{DPP}, we get that
\begin{align*}
& V(t+\delta, x+h)+V(t-\delta, x-h)-2V(t,x) 
\\
\leq\ & \underbrace{V(t+\delta, x+h) + V(t+\delta, \gamma(t+\delta; t-\delta, x-h, \bar{u})) -2V(t+\delta, \gamma(t+\delta;t,x,u^{*}) )}_{I}
\\
+& \underbrace{\int_{t-\delta}^{t+\delta}{L(\gamma(s; t-\delta,x-h,u^{*}), \bar{u}(s), m^{\eta}_{s})\ ds}-2\int_{t}^{t+\delta}{L(\gamma(s;t,x,u^{*}), u^{*}(s), m^{\eta}_{s})\ ds}}_{II}.
\end{align*}
Thus, by the first parte of the proof term $I$ is bounded by a constant times $|h|^{2}+|\delta|^{2}$. Now, we have to estimate term $II$. Let us denote, for simplicity, by $\gamma^{-}$ the curve $\gamma(\cdot\ ; t-\delta, x-h, u^{*})$. Then, by assumption {\bf (L1)} we have that there exists a constant $D \geq 0 $ such that
\begin{align}\label{semi1}
\begin{split}
II =\ & 2\int_{t}^{t+\delta}{\Big(L(\gamma^{-}(2s-t-\delta), u^{*}(s), m^{\eta}_{2s-t-\delta})-L(\gamma(s), u^{*}(s), m^{\eta}_{s})\Big)\ ds}
\\
\leq & D \int_{t}^{t+\delta}{\Big( |\gamma^{-}(2s-t-\delta)-\gamma(s)| + d_{1}(m^{\eta}_{2s-t-\delta}, m^{\eta}_{s})\Big)\ ds}
\end{split}
\end{align}
Since $\eta$ is a Mean Field Games equilibrium we know by Theorem \ref{holder} that the generated measure $\{m^{\eta}_{t}\}_{t \in [0,T]}$ is $\frac{1}{2}$-H\"older continuous in time with respect to the $d_{1}$ distance. Therefore,
\begin{align}\label{semi2}
\begin{split}
& \int_{t}^{t+\delta}{d_{1}(m^{\eta}_{2s-t-\delta}, m^{\eta}_{s})\ ds}\leq \kappa([m_{0}]_{\alpha}) \int_{t}^{t+\delta}{|s-t-\delta|^{\frac{1}{2}}\ ds}
\leq \frac{2}{3}\kappa([m_{0}]_{1})|\delta|^{\frac{3}{2}}.
\end{split}
\end{align}
Now, we have to estimate the distance between the curves $\gamma_{-}$ and $\gamma$. For that, we recall that since $\gamma_{-}$ and $\gamma$ are solutions of \eqref{dyn} we know that
\begin{align*}
\gamma^{-}(2s-t-\delta)=& e^{(s-t+\delta)A}(x-h) + \int_{t-\delta}^{2s-t-\delta}{e^{(\tau-t+\delta)A}B\bar{u}(\tau)\ d\tau},
\\
\gamma(s)=& e^{(s-t)A}x+\int_{t}^{s}{e^{(\tau-t)A}Bu^{*}(\tau)\ d\tau}.
\end{align*}

By \cite[Theorem 7.4.6]{bib:SC}, without loss of generality, we can assume that $u^{*}$ belongs to $L^{\infty}$ and consequently, $\bar{u} \in L^{\infty}$. Thus, we obtain that for any $s \in [t, t+\delta]$
\begin{align*}
|\gamma^{-}(2s-t-\delta)-\gamma_{-}(s)| \leq\ & e^{T\| A\|}|h|
+ 2se^{T\|A\|}\|B\| \|\bar{u}\|_{\infty}+(s-t)e^{T\|A\|}\|B\|\|u^{*}\|_{\infty}.
\end{align*}
Therefore, we deduce that
\begin{align}\label{semi3}
\begin{split}
& \int_{t}^{t+\delta}{\left( e^{T\| A\|}|h|
+ 2se^{T\|A\|}\|B\| \|\bar{u}\|_{\infty}+(s-t)e^{T\|A\|}\|B\|\|u^{*}\|_{\infty} \right)\ ds}
\\
\leq &  \delta e^{T\| A\|}|h| + \Big(2e^{T\|A\|}\|B\| \|\bar{u}\|_{\infty}+ e^{T\|A\|}\|B\|\|u^{*}\|_{\infty}\Big)\delta^{2}.
\end{split}
\end{align}
Hence, plugging inequalities \eqref{semi2} and \eqref{semi3} into \eqref{semi1} the proof is complete.
\end{proof}

\begin{remark}
We note that Theorem \ref{Vsemiconcavity} guarantees that the function $x \mapsto V(t,x)$ is linearly semiconcave, locally uniformly in time.
\end{remark}

The proof of the following theorem is given in Appendix \ref{appendix1} since the techniques we have used to prove it are classical in optimal control theory.

\begin{theorem}\label{Vlipschitz}
$V$ is locally Lipschitz continuous on $[0,T] \times \R^{d}$.
\end{theorem}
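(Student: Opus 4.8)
The plan is to establish local Lipschitz continuity separately in the space variable (uniformly in time) and in the time variable, and then to combine the two. Throughout I would fix a radius $R>0$, work with $x,y \in \overline{B}_R$ and $t,t' \in [0,T]$, and keep track of the fact that all the a priori bounds (Proposition~\ref{bound}, Corollary~\ref{infboundmin}, Lemma~\ref{equicontinuity}) are locally uniform in the initial datum.

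For the spatial estimate, fix $t$ and let $u^{*}$ be an optimal control for $(t,x)$, with trajectory $\gamma_x(\cdot)=\gamma(\cdot;t,x,u^{*})$. I would use the \emph{same} control $u^{*}$ from the point $y$: by linearity of \eqref{dyn} one has $\gamma_x(s)-\gamma_y(s)=e^{(s-t)A}(x-y)$, so $\|\gamma_x-\gamma_y\|_\infty \le e^{T\|A\|}|x-y|$. Comparing the two costs, $V(t,y)-V(t,x) \le \int_t^T \big(L(\gamma_y,u^{*},m^\eta_s)-L(\gamma_x,u^{*},m^\eta_s)\big)\,ds + \big(G(\gamma_y(T),m^\eta_T)-G(\gamma_x(T),m^\eta_T)\big)$. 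The key point is a bound on $D_x L$: writing $D_xL(\xi,u,m)=D_xL(\xi,0,m)+\int_0^1 D^2_{xu}L(\xi,\tau u,m)\,u\,d\tau$ and invoking {\bf (L3)}$(ii)$--$(iii)$ gives $|D_xL(\xi,u,m)| \le C_2+C_1|u|+\tfrac{C_1}{2}|u|^2$, a bound independent of $\xi$. Integrating along the segment joining $\gamma_x$ to $\gamma_y$ and using $\|u^{*}\|_2 \le K$ from Proposition~\ref{bound}, the integral term is controlled by $\big(C_2T+C_1\sqrt{T}\,K+\tfrac{C_1}{2}K^2\big)e^{T\|A\|}|x-y|$; the terminal term is bounded by $\|D_xG\|_\infty e^{T\|A\|}|x-y|$ since $G(\cdot,m)\in C^1_b$. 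Exchanging the roles of $x$ and $y$ yields the spatial Lipschitz bound, uniform in $t$.

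For the temporal estimate I would first reduce, via \cite[Theorem 7.4.6]{bib:SC} (and Theorem~\ref{Hmaxprinciple}, which gives $\gamma^{*},p\in C^2$, whence $u^{*}=-B^{\star}p$ is continuous), to optimal controls $u^{*}\in L^\infty$ with $\|u^{*}\|_\infty$ and $\|\dot\gamma^{*}\|_\infty$ bounded locally uniformly in $x$. For $t<t'$ and the direction $V(t,x)-V(t',x)$, I would use the zero control on $[t,t']$: since $\gamma(s;t,x,0)=e^{(s-t)A}x$ stays in a fixed ball and $|L(\cdot,0,m)|\le C_2$ by {\bf (L3)}$(iii)$, the dynamic programming principle gives $V(t,x) \le C_2|t'-t|+V(t',e^{(t'-t)A}x)$, and the spatial Lipschitz estimate together with $|e^{(t'-t)A}x-x|\le C(1+|x|)|t'-t|$ closes this direction. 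For $V(t',x)-V(t,x)$, I would take an optimal $L^\infty$ control $u^{*}$ for $(t,x)$; by the dynamic programming principle $V(t,x)=\int_t^{t'}L(\gamma^{*},u^{*},m^\eta_s)\,ds+V(t',\gamma^{*}(t'))$, so $V(t',x)-V(t,x) \le C_R\,|x-\gamma^{*}(t')| + \int_t^{t'}|L(\gamma^{*},u^{*},m^\eta_s)|\,ds$, where $C_R$ is the spatial Lipschitz constant. Because $u^{*}\in L^\infty$, the velocity $\dot\gamma^{*}$ is bounded, so $|x-\gamma^{*}(t')|=|\gamma^{*}(t)-\gamma^{*}(t')|\le C(1+|x|)|t'-t|$ is \emph{linear} in $|t'-t|$, and the running-cost integral is bounded by $\big(c_1+\tfrac{1}{c_0}\|u^{*}\|_\infty^2\big)|t'-t|$.

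Combining the two directions gives $|V(t,x)-V(t',x)|\le C(1+R)|t'-t|$ on $[0,T]\times\overline{B}_R$, and together with the spatial estimate this yields local Lipschitz continuity of $V$ on $[0,T]\times\R^d$. I expect the main obstacle to be precisely the lower direction of the time estimate: with only the $L^2$ control bound of Proposition~\ref{bound} one obtains a mere $|t'-t|^{1/2}$ modulus, since the displacement of the optimal trajectory is only $\tfrac12$-H\"older and the running-cost energy over $[t,t']$ need not be $O(|t'-t|)$. The reduction to $L^\infty$ optimal controls — legitimized by the regularity of the adjoint arc in Theorem~\ref{Hmaxprinciple} — is exactly what upgrades both terms to a linear bound, and checking that the attendant constants are locally uniform in the initial datum is the point requiring the most care.
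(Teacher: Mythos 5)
Your proposal is correct and follows essentially the same route as the paper's own proof: spatial Lipschitz continuity by reusing the optimal control from the shifted initial point (linearity of \eqref{dyn}, the bound on $D_{x}L$ from {\bf (L3)}, and $\|u^{*}\|_{2}\leq K$ from Proposition~\ref{bound}), then temporal Lipschitz continuity via the dynamic programming principle after the reduction to $L^{\infty}$ optimal controls by \cite[Theorem 7.4.6]{bib:SC}, testing with the zero control in one direction and the optimal control in the other. Your version is in fact slightly more careful on two minor points (the quadratic bound $|D_{x}L|\leq C_{2}+C_{1}|u|+\tfrac{C_{1}}{2}|u|^{2}$ and the use of $\|D_{x}G\|_{\infty}$ rather than $\|G\|_{\infty}$), but the structure and all key ingredients coincide with the paper's argument.
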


\subsection{Lipschitz regularity of Mean Field Games equilibrium}

Define the following class of curves on $\PP_{\alpha}(\R^{d})$
\begin{equation*}
\text{Lip}(\PP_{\alpha}) = \left\{t\in [0,T] \mapsto m_{t} \in \PP_{\alpha}(\R^{d}):\ \sup_{\substack{t \not= s \\ t,s \in [0,T]}} \frac{d_{1}(m_{t}, m_{s})}{|t-s|} < \infty \right\},
\end{equation*}
and define 
\begin{equation*}
\PP_{m_{0}}^{\text{Lip}(\PP_{\alpha})}(\Gamma_{T}) = \Big\{\eta \in \PP_{m_{0}}(\Gamma_{T}, R): m^{\eta}_{t} \in \text{Lip}(\PP_{1}) \Big\}.
\end{equation*}

\begin{remark}
\em{The set $\PP_{m_{0}}^{\text{Lip}(\PP_{\alpha})}(\Gamma_{T})$ is non-empty. Following the construction we have done in Remark \ref{empty}, let $p: \R^{d} \to \Gamma_{T}$ be defined as
\begin{equation*}
x \mapsto p[x](t):=e^{tA}x, \forall\ t \in [0,T]
\end{equation*}
and define $\eta = p \sharp m_{0}$. Therefore, by Remark \ref{empty}, we only need to prove that $m^{\eta} \in \text{Lip}(\PP_{\alpha})$.

Indeed,
\begin{align*}
d_{1}(m^{\eta}_{t_{1}} , m^{\eta}_{t_{2}}) = & \sup_{\phi \in 1-\text{Lip}}\int_{\R^{d}}{\phi(x)\big(m^{\eta}_{t_{1}}(dx)-m^{\eta}_{t_{2}}(dx) \big)}
\\
= & \sup_{\phi \in 1-\text{Lip}} \int_{\Gamma_{T}}{\big(\phi(\gamma(t_{1}))-\phi(\gamma(t_{2})) \big) \eta(d\gamma)}
\\
=& \sup_{\phi \in 1-\text{Lip}} \int_{\Gamma_{T}}{\big(\phi(\gamma(t_{1}))-\phi(\gamma(t_{2})) \big) p \sharp m_{0} (d\gamma)}
\\
=& \sup_{\phi \in 1-\text{Lip}} \int_{\R^{d}}{\big(\phi(p[x](t_{1}))-\phi(p[x](t_{2})) \big) m_{0}(dx)}
\\
=& \sup_{\phi \in 1-\text{Lip}} \int_{\R^{d}}{\big(\phi(e^{At_{1}}x)-\phi(e^{At_{2}}x) \big) m_{0}(dx)}
\\ 
\leq & \int_{\R^{d}}{\big|e^{At_{1}}x-e^{At_{2}}x \big| m_{0}(dx)}.
\end{align*}
Since the function $t \mapsto e^{At}x$ is Lipschitz continuous in any compact subintervals of $\R$ we get the conclusion.
\begin{equation*}
\eqno{\square}
\end{equation*}
}
\end{remark}

\begin{proposition}\label{boundedvelocity}
Assume that the Hamiltonian $H$ satisfy the following
\begin{itemize}
\item[{\bf (H1)}] there exists a constant $c_{3} > 0$ such that for any $(x,p,m) \in \R^{d} \times \R^{k} \times \PP_{\alpha}(\R^{d})$
\begin{equation*}
\langle D_{x}H(x,p,m), p \rangle \geq c_{3}|p|^{2}-c_{4}.
\end{equation*}
\end{itemize}
Fix $x \in \R^{d}$ and $\eta \in \PP_{m_{0}}^{\text{Lip}(\PP_{1})}(\Gamma_{T})$. Let $u^{*}$ be an optimal control for the problem \eqref{min} and let $\gamma^{*}$ be the minimizing curve generated by $u$. Then, there exists a real positive constant $Q_{1}$ such that 
\begin{equation*}
\| \dot\gamma^{*}\|_{\infty} \leq Q_{1}(1+|x|).
\end{equation*}
\end{proposition}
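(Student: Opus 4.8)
The plan is to exploit the Pontryagin maximum principle together with assumption \textbf{(H1)} to produce an a priori bound on the dual arc, and then transfer this bound to $\dot\gamma^{*}$ through the growth estimate \eqref{hamiltoniangrowth} for $D_{p}H$.

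First I would apply Theorem \ref{maxprinciple} (equivalently its Hamiltonian form, Theorem \ref{Hmaxprinciple}) to the optimal pair $(\gamma^{*}, u^{*})$ for problem \eqref{min}, where the Hamiltonian is evaluated along the flow $s \mapsto m^{\eta}_{s}$. This yields an absolutely continuous dual arc $p:[0,T] \to \R^{d}$ satisfying the adjoint equation $\dot p(s) = D_{x}H(\gamma^{*}(s), p(s), m^{\eta}_{s})$ for a.e.\ $s$, together with the transversality condition $p(T) = -\nabla g(\gamma^{*}(T)) = -D_{x}G(\gamma^{*}(T), m^{\eta}_{T})$. By assumption \textbf{(L2)} the map $x \mapsto G(x, m^{\eta}_{T})$ lies in $C^{1}_{b}(\R^{d})$, so that $|p(T)| \leq \|D_{x}G(\cdot, m^{\eta}_{T})\|_{\infty}$, a quantity independent of $x$.

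The heart of the argument is a uniform bound on $\|p\|_{\infty}$. I would set $\phi(s) = |p(s)|^{2}$, which is absolutely continuous, and differentiate: using the adjoint equation and assumption \textbf{(H1)} one gets, for a.e.\ $s$,
\[
\phi'(s) = 2\langle \dot p(s), p(s)\rangle = 2\langle D_{x}H(\gamma^{*}(s), p(s), m^{\eta}_{s}), p(s)\rangle \geq 2c_{3}\,\phi(s) - 2c_{4}.
\]
This differential inequality has the \emph{wrong} sign for a forward Grönwall estimate; the key observation is that it must be integrated \emph{backward} starting from the terminal bound on $\phi(T)$. Multiplying by the integrating factor $e^{-2c_{3}s}$ and integrating from $s$ to $T$ gives $\phi(s) \leq e^{2c_{3}(s-T)}\phi(T) + \tfrac{c_{4}}{c_{3}}\bigl(1 - e^{2c_{3}(s-T)}\bigr)$, and since $s \leq T$ forces $e^{2c_{3}(s-T)} \leq 1$ one obtains the clean bound $\|p\|_{\infty}^{2} \leq \|D_{x}G(\cdot, m^{\eta}_{T})\|_{\infty}^{2} + \tfrac{c_{4}}{c_{3}} =: C_{p}^{2}$, independent of $x$. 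I expect this backward Grönwall step to be the main obstacle: one has to handle the a.e.\ differential inequality carefully (the arc $p$ being only absolutely continuous, so that $\phi$ is absolutely continuous rather than $C^{1}$) and to verify that the resulting constant $C_{p}$ does not degenerate in $x$.

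Finally I would turn this into the desired estimate. From the Hamiltonian system $\dot\gamma^{*}(s) = -D_{p}H(\gamma^{*}(s), p(s), m^{\eta}_{s})$ and the growth bound \eqref{hamiltoniangrowth} we get $|\dot\gamma^{*}(s)| \leq c_{2}\bigl(1 + |\gamma^{*}(s)| + |p(s)|\bigr)$. Bounding $|\gamma^{*}(s)|$ via Corollary \ref{infboundmin} through $\|\gamma^{*}\|_{\infty} \leq \tilde{C}_{1}(1+|x|)$ and $|p(s)|$ by $C_{p}$, the right-hand side is at most $c_{2}\bigl(1 + \tilde{C}_{1}(1+|x|) + C_{p}\bigr)$, which is dominated by $Q_{1}(1+|x|)$ for a suitable constant $Q_{1}$ depending only on $c_{2}$, $\tilde{C}_{1}$ and $C_{p}$. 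This gives $\|\dot\gamma^{*}\|_{\infty} \leq Q_{1}(1+|x|)$ and completes the proof.
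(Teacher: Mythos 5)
Your proposal is correct and follows essentially the same route as the paper's proof: the Hamiltonian form of the maximum principle, a backward Gr\"onwall-type estimate on $|p|^{2}$ driven by \textbf{(H1)} with the terminal bound coming from the transversality condition and \textbf{(L2)}, and finally the growth bound \eqref{hamiltoniangrowth} together with Corollary \ref{infboundmin} to control $\dot\gamma^{*}$. The only differences are cosmetic (the paper bounds $\dot\gamma^{*}$ in terms of $\sup|p^{*}|$ before estimating the dual arc, and your constant $c_{4}/c_{3}$ is in fact the careful version of the paper's $c_{4}/(2c_{3})$), so nothing needs to be changed.
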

\medskip
\begin{remark}
\em Note that, by construction and the explicit form of the Hamiltonian $H$ given in \eqref{hamiltonianfun}, assumption {\bf (H1)} can be restated in terms of the Lagrangian $L$ as follows
\begin{equation*}
\langle D_{x}L(x, -B^{\star}p, m), p \rangle \leq -c_{3}|p|^{2}-c_{4}, \quad \forall\ (x,p,m) \in \R^{d} \times \R^{k} \times \PP_{\alpha}(\R^{d}).
\end{equation*}
\end{remark}
\medskip
\begin{proof}
Since $\eta \in \PP_{m_{0}}^{\mathcal{L}_{b}}(\Gamma_{T})$, by the maximum principle in Hamiltonian form, Theorem \ref{Hmaxprinciple}, we have that there exists an arc $p^{*}: [0,T] \to \R^{d}$ such that
\begin{align*}
\begin{cases}
\dot\gamma^{*}(t)=-D_{p}H(\gamma^{*}(t), p^{*}(t), m^{\eta}_{t}),
\\
\dot p^{*}(t)= D_{x}H(\gamma^{*}(t), p^{*}(t), m^{\eta}_{t}),
\end{cases}
\end{align*}
and by the transversality condition in Theorem \ref{maxprinciple},  
\begin{equation}\label{trasv}
	p^{*}(T)=D_{x}G(\gamma^{*}(T), m^{\eta}_{T}).
\end{equation}

Thus, by \eqref{hamiltoniangrowth} we obtain that
\begin{align*}
& \sup_{t \in [0,T]} |\dot\gamma^{*}(t)| = \sup_{t \in [0,T]} |D_{p}H(\gamma^{*}(t), p^{*}(t), m^{\eta}_{t})| 
\\
\leq &  c_{2} \left(1+\sup_{t \in [0,T]} |\gamma^{*}(t)|+\sup_{t \in [0,T]} |p^{*}(t)| \right) \leq \beta \left(1+ |x|+\sup_{t \in [0,T]} |p^{*}(t)|\right ),
\end{align*}
where the last inequality follows by Corollary \ref{infboundmin} for some constant $\beta \geq 0$.
Thus, we have reduced the problem to prove that the dual arc $p^{*}$ is bounded.

By Theorem \ref{Hmaxprinciple}, we have that
\begin{equation*}
\dot p^{*}(t)= D_{x}H(\gamma^{*}(t), p^{*}(t), m^{\eta}_{t})
\end{equation*}
and, by assumption {\bf (H1)} we deduce that
\begin{equation*}
\frac{d}{dt} \left(\frac{1}{2} |p^{*}(t)|^{2} \right) = \langle D_{x}H(\gamma^{*}(t), p^{*}(t), m^{\eta}_{t}), p^{*}(t) \rangle \geq c_{3} |p^{*}(t)|^{2}-c_{4}.
\end{equation*}
Therefore, we get
\begin{equation*}
\frac{d}{dt}\left(\frac{1}{2}e^{-2c_{3}t}|p^{*}(t)|^{2} \right) \geq -c_{4}e^{-2c_{3}t}	
\end{equation*}
and integrating over $[t,T]$ both side of the inequality we obtain 
\begin{equation*}
|p^{*}(t)|^{2} \leq  |p^{*}(T)|^{2}+\frac{c_{4}}{2c_{3}}
\end{equation*}
which is bounded by \eqref{trasv} and assumptions on $G$.
\end{proof}

We recall the definition of the set-valued map $E$ given in the Section 3, that is
\begin{equation*}
E: \big(\PP_{m_{0}}(\Gamma_{T}, R), d_{1} \big) \rightrightarrows \big(  \PP_{m_{0}}(\Gamma_{T}, R), d_{1}  \big)
\end{equation*}
such that
\begin{equation*}
\eta \mapsto E(\eta) = \big\{\nu \in \mathcal{P}_{m_{0}}(\Gamma_{T}, R):\  \supp(\nu_{x}) \subset\ \Gamma^{*}_{\eta}(x),\ m_{0}-\text{a.e.} \big\}.
\end{equation*}

\medskip
\begin{lemma}\label{inc}
$E(\PP_{m_{0}}^{\text{Lip}(\PP_{\alpha})}) \subset \PP_{m_{0}}^{\text{Lip}(\PP_{\alpha})}$.
\end{lemma}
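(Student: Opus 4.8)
Let $\eta \in \PP_{m_{0}}^{\text{Lip}(\PP_{\alpha})}(\Gamma_{T})$ and let $\nu \in E(\eta)$; I must show $\nu \in \PP_{m_{0}}^{\text{Lip}(\PP_{\alpha})}(\Gamma_{T})$. Since $E(\eta) \subset \PP_{m_{0}}(\Gamma_{T}, R)$ by definition, the membership $\nu \in \PP_{m_{0}}(\Gamma_{T}, R)$ (in particular $e_{0} \sharp \nu = m_{0}$) is automatic, and the only thing left to verify is that the induced flow $t \mapsto m^{\nu}_{t} = e_{t} \sharp \nu$ is Lipschitz continuous with respect to $d_{1}$. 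The plan is to reduce, via the Kantorovich--Rubinstein duality, the time increment of $m^{\nu}_{t}$ to the pointwise time increment of $\nu$-a.e.\ trajectory, and then to invoke the uniform sup-norm velocity bound of Proposition \ref{boundedvelocity}.

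First I would reproduce the duality computation already used in Theorem \ref{holder}: for any $t$, $s \in [0,T]$,
\begin{equation*}
d_{1}(m^{\nu}_{t}, m^{\nu}_{s}) = \sup_{\varphi \in \text{Lip}_{1}(\R^{d})} \int_{\Gamma_{T}}{\big(\varphi(\gamma(t))-\varphi(\gamma(s))\big)\ \nu(d\gamma)} \leq \int_{\Gamma_{T}}{|\gamma(t)-\gamma(s)|\ \nu(d\gamma)}.
\end{equation*}
The decisive improvement over Theorem \ref{holder} is that, because $\nu \in E(\eta)$, the disintegration $\{\nu_{x}\}_{x}$ of $\nu$ with respect to $m_{0}$ satisfies $\supp(\nu_{x}) \subset \Gamma^{*}_{\eta}(x)$ for $m_{0}$-a.e.\ $x$; hence $\nu$-a.e.\ curve $\gamma$ is an optimal trajectory, starting at $\gamma(0)$, of the control problem \eqref{min} with the \emph{fixed} flow $m^{\eta}$. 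Since $\eta$ generates a Lipschitz flow, the hypotheses of Proposition \ref{boundedvelocity} are in force, and it yields $\|\dot\gamma\|_{\infty} \leq Q_{1}(1+|\gamma(0)|)$ for $\nu$-a.e.\ $\gamma$. Consequently $|\gamma(t)-\gamma(s)| \leq \|\dot\gamma\|_{\infty}\,|t-s| \leq Q_{1}(1+|\gamma(0)|)|t-s|$. This is exactly the point where the $L^{\infty}$ velocity bound (rather than the merely $L^{2}$ bound of Lemma \ref{equicontinuity}) turns the $\tfrac{1}{2}$-H\"older estimate of Theorem \ref{holder} into a genuine Lipschitz estimate.

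To conclude I would integrate this inequality over $\Gamma_{T}$ and use $e_{0} \sharp \nu = m_{0}$ to write $\int_{\Gamma_{T}}(1+|\gamma(0)|)\,\nu(d\gamma) = 1 + \int_{\R^{d}}|x|\,m_{0}(dx) = 1 + [m_{0}]_{1}$, which is finite since $m_{0} \in \PP_{\alpha}(\R^{d})$ with $\alpha > 1$. This gives $d_{1}(m^{\nu}_{t}, m^{\nu}_{s}) \leq Q_{1}(1+[m_{0}]_{1})|t-s|$, with a Lipschitz constant independent of the particular $\nu \in E(\eta)$, whence $m^{\nu} \in \text{Lip}(\PP_{\alpha})$ and therefore $\nu \in \PP_{m_{0}}^{\text{Lip}(\PP_{\alpha})}(\Gamma_{T})$. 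I expect no real difficulty beyond bookkeeping: the entire content sits in Proposition \ref{boundedvelocity}, and the main thing to check carefully is that its standing hypotheses --- assumption {\bf (H1)} together with the Lipschitz regularity of the reference flow $m^{\eta}$, which is precisely the assumption $\eta \in \PP_{m_{0}}^{\text{Lip}(\PP_{\alpha})}(\Gamma_{T})$ --- do apply to $\nu$-almost every minimizing curve.
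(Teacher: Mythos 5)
Your proof is correct and follows essentially the same route as the paper: the Kantorovich--Rubinstein duality reduction to $\int_{\Gamma_T}|\gamma(t)-\gamma(s)|\,\nu(d\gamma)$, followed by the sup-norm velocity bound $\|\dot\gamma\|_\infty \le Q_1(1+|\gamma(0)|)$ from Proposition \ref{boundedvelocity} applied to the ($\nu$-a.e.\ optimal) curves in the support of $\nu$, and finally integration against $m_0$ via $e_0\sharp\nu=m_0$. If anything, your write-up is more explicit than the paper's about why Proposition \ref{boundedvelocity} is applicable (the support condition defining $E(\eta)$ plus the Lipschitz regularity of the reference flow $m^\eta$ and assumption \textbf{(H1)}), which is exactly the point the paper leaves implicit.
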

\begin{proof}
Fix $\eta \in \PP_{m_{0}}^{\text{Lip}(\PP_{\alpha})}$ and let $\mu$ be a Borel probability measure in $E(\eta)$. We want to prove that for any $t_{1}$, $t_{2} \in [0,T]$, with $t_{1} < t_{2}$ 
\begin{equation*}
\sup_{\substack{t \not= s \\ t,s \in [0,T]}} \frac{d_{1}(m_{t}, m_{s})}{|t-s|} < \infty.
\end{equation*} 
Hence
\begin{align*}
d_{1}(m^{\mu}_{t_{1}}, m^{\mu}_{t_{2}}) = & \sup_{\phi \in \text{Lip}_{1}(\R^{d})} \int_{\R^{d}}{\phi(x) \big(m^{\mu}_{t_{1}}(dx) - m^{\mu}_{t_{1}}(dx)\big)}
\\
= & \sup_{\phi \in \text{Lip}_{1}(\R^{d})} \int_{\Gamma_{T}}{\big(\phi(\gamma(t_{1}))-\phi(\gamma(t_{2}) \big)\ \mu(d\gamma)} 
\\
\leq & \int_{\Gamma_{T}}{\big|\gamma(t_{1})-\gamma(t_{2}) \big|\ \mu(d\gamma)} \leq |t_{1}-t_{2}|\int_{\Gamma_{T}}{\|\dot\gamma \|_{\infty}|\ \mu(d\gamma)}
\\
\leq & |t_{1}-t_{2}|\int_{\Gamma_{T}}{Q_{1}(1+|x|)\ \mu(d\gamma)},
\end{align*}
where the last inequality follows by Proposition \ref{boundedvelocity}. Therefore, observing that $x=\gamma(0)$  and $\mu$ belongs to $\PP_{m_{0}}(\Gamma_{T}, R)$, we obtain the conclusion.
\end{proof}

\begin{theorem}[{\bf Existence of Lipschitz Mean Field Games equilibria}]\label{lipschitzmfg}
There exist at least one Mean Field Games equilibrium such that the associated family of measure $\{ m^{\eta}_{t}\}_{t \in [0,T]}$ belongs to $\text{Lip}(\PP_{\alpha})$.
\end{theorem}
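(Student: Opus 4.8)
The plan is to apply the Kakutani--Fan--Glicksberg fixed point theorem once more, exactly as in the proof of Theorem \ref{existenceofmfg}, but now to the restriction of the set-valued map $E$ to a suitable subset of $\PP_{m_{0}}(\Gamma_{T}, R)$ consisting of measures whose associated flow is \emph{uniformly} Lipschitz in time. The crucial observation is that the estimate obtained in the proof of Lemma \ref{inc} yields not merely Lipschitz continuity but a Lipschitz constant independent of the measure: indeed, since $e_{0} \sharp \mu = m_{0}$ and $x = \gamma(0)$, one has
\begin{equation*}
\int_{\Gamma_{T}}{Q_{1}(1+|x|)\ \mu(d\gamma)} = Q_{1}\Big(1+\int_{\R^{d}}{|x|\ m_{0}(dx)}\Big) = Q_{1}(1+[m_{0}]_{1}) =: L_{0},
\end{equation*}
so that every $\mu \in E(\PP_{m_{0}}^{\text{Lip}(\PP_{\alpha})})$ satisfies $d_{1}(m^{\mu}_{t}, m^{\mu}_{s}) \leq L_{0}|t-s|$ for all $t,s \in [0,T]$.

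Accordingly, I would introduce the set
\begin{equation*}
\mathcal{K} = \Big\{\eta \in \PP_{m_{0}}(\Gamma_{T}, R):\ d_{1}(m^{\eta}_{t}, m^{\eta}_{s}) \leq L_{0}|t-s|\ \text{for all}\ t,s \in [0,T] \Big\} \subset \PP_{m_{0}}^{\text{Lip}(\PP_{\alpha})}.
\end{equation*}
First I would check that $\mathcal{K}$ is nonempty: by the Remark above $\PP_{m_{0}}^{\text{Lip}(\PP_{\alpha})}$ is nonempty, so fixing any $\eta_{0}$ in it, Lemma \ref{convex} yields $E(\eta_{0}) \neq \emptyset$ while the sharp form of Lemma \ref{inc} yields $E(\eta_{0}) \subset \mathcal{K}$; hence $\mathcal{K} \neq \emptyset$. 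Next I would verify that $\mathcal{K}$ is convex and compact. Convexity follows from the joint convexity of $d_{1}$ implied by the Kantorovich--Rubinstein duality formula: for $\eta = \lambda\eta_{1}+(1-\lambda)\eta_{2}$ one has $m^{\eta}_{t} = \lambda m^{\eta_{1}}_{t}+(1-\lambda)m^{\eta_{2}}_{t}$, whence $d_{1}(m^{\eta}_{t}, m^{\eta}_{s}) \leq \lambda d_{1}(m^{\eta_{1}}_{t}, m^{\eta_{1}}_{s})+(1-\lambda)d_{1}(m^{\eta_{2}}_{t}, m^{\eta_{2}}_{s}) \leq L_{0}|t-s|$. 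Compactness follows because $\mathcal{K}$ is a closed subset of the compact set $\PP_{m_{0}}(\Gamma_{T}, R)$: if $\eta_{i} \rightharpoonup^{*} \eta$ with $\eta_{i} \in \mathcal{K}$, then by Proposition \ref{measureregularity}(2) one has $d_{1}(m^{\eta_{i}}_{t}, m^{\eta}_{t}) \to 0$ for every $t$, and passing to the limit in the triangle inequality preserves the bound $d_{1}(m^{\eta}_{t}, m^{\eta}_{s}) \leq L_{0}|t-s|$.

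Since $\mathcal{K} \subset \PP_{m_{0}}^{\text{Lip}(\PP_{\alpha})}$, Lemma \ref{inc} shows $E(\mathcal{K}) \subset \mathcal{K}$, so that $E$ restricts to a set-valued self-map of $\mathcal{K}$. The remaining hypotheses of Kakutani--Fan--Glicksberg were already established in Section 4 and persist under restriction: the values $E(\eta)$ are nonempty, convex and compact by Lemma \ref{convex}, and $E$ has closed graph by Lemma \ref{mfgclosedgraph}. Applying the fixed point theorem to $E|_{\mathcal{K}}$ therefore produces some $\bar\eta \in \mathcal{K}$ with $\bar\eta \in E(\bar\eta)$, which is precisely a Mean Field Games equilibrium whose associated flow $t \mapsto m^{\bar\eta}_{t}$ is $L_{0}$-Lipschitz, hence belongs to $\text{Lip}(\PP_{\alpha})$. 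The only genuinely delicate point is the very first one, namely recognizing that the bound in Lemma \ref{inc} is \emph{uniform} in the measure; this uniformity is exactly what allows the closed, convex, compact subset $\mathcal{K}$ to be defined, whereas the full set $\PP_{m_{0}}^{\text{Lip}(\PP_{\alpha})}$ carries no uniform Lipschitz bound and need not be compact, so that the fixed point argument could not be run on it directly.
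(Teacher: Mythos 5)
Your proposal is correct and follows the same overall strategy as the paper's proof: restrict the set-valued map $E$ to a class of measures with Lipschitz flows, use Lemma \ref{inc} for invariance, and apply the Kakutani--Fan--Glicksberg theorem via the closed-graph property of Lemma \ref{mfgclosedgraph}. The difference is that the paper runs the argument directly on $\PP_{m_{0}}^{\text{Lip}(\PP_{\alpha})}(\Gamma_{T})$ and justifies its compactness in one line, namely that it is a subset of the compact set $\PP_{m_{0}}(\Gamma_{T},R)$. As you observe, this is not sufficient: a subset of a compact set is compact only if it is closed, and the set of measures whose flows are Lipschitz \emph{with no uniform bound on the constant} need not be weakly-$*$ closed (a limit of flows with blowing-up Lipschitz constants can fail to be Lipschitz, e.g.\ measures concentrated on single trajectories driven by truncations of an unbounded $L^{2}$ control). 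Your refinement -- extracting from the proof of Lemma \ref{inc} the measure-independent constant $L_{0}=Q_{1}(1+[m_{0}]_{1})$, which is finite since $m_{0}\in\PP_{\alpha}(\R^{d})$ with $\alpha>1$, and working on the sub-level set $\mathcal{K}$, which you verify to be nonempty, convex, and closed (hence compact) -- repairs exactly this gap; it also supplies the convexity and nonemptiness of the domain, hypotheses of the fixed point theorem that the paper leaves implicit. So your route is the same in spirit but is the more complete argument: the paper's proof is best read as implicitly intending your set $\mathcal{K}$ in place of the full Lipschitz class.
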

\begin{proof}
It is sufficient to prove that the set-valued map $E: \PP_{m_{0}}^{\text{Lip}(\PP_{\alpha})}(\Gamma_{T}) \rightrightarrows \PP_{m_{0}}^{\text{Lip}(\PP_{\alpha})}$ has a fix point and in order to prove it we want to use Kakutani's fixed point theorem.

We recall that by Theorem \ref{mfgclosedgraph} we have that the map $E$ has closed graph and so also the restriction of $E$ on $\PP_{m_{0}}^{\text{Lip}(\PP_{\alpha})}$. Moreover, since $\PP_{m_{0}}^{\text{Lip}(\PP_{\alpha})} \subset \PP_{m_{0}}(\Gamma_{T}, R)$ we have that $\PP_{m_{0}}^{\text{Lip}(\PP_{\alpha})}$ is compact.

Therefore, all the assumptions of Kakutani's fixed point theorem are satisfied and this concludes the proof.
\end{proof}

\begin{corollary}\label{linearsemiconcavity}
Let $\eta \in \PP_{\alpha}(\Gamma_{T}, R)$ be a Lipschitz Mean Field Games equilibrium and let $(V, m^{\eta})$ be a mild solution associated with $\eta$. Then, the value function $V$ is locally semiconcave on $[0,T] \times \R^d$ with a linear modulus of semiconcavity. 
\end{corollary}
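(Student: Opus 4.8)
The plan is to re-run the argument of Theorem~\ref{Vsemiconcavity} almost verbatim, the only change being that the time-regularity input is upgraded from $\tfrac12$-H\"older continuity to Lipschitz continuity of the measure flow. Recall that local semiconcavity with a \emph{linear} modulus is precisely the statement that, locally, the second-order difference $V(t+\delta,x+h)+V(t-\delta,x-h)-2V(t,x)$ is controlled by $C(|h|^{2}+|\delta|^{2})$, i.e. by $C|(h,\delta)|^{2}$. Since $\eta$ is by hypothesis a Lipschitz Mean Field Games equilibrium, the curve $t\mapsto m^{\eta}_{t}$ lies in $\text{Lip}(\PP_{\alpha})$, so there is a constant $\text{Lip}(m^{\eta})$ with $d_{1}(m^{\eta}_{t},m^{\eta}_{s})\le \text{Lip}(m^{\eta})\,|t-s|$ for all $t,s\in[0,T]$; the existence of such an $\eta$ is exactly what Theorem~\ref{lipschitzmfg} provides.

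First I would keep untouched the spatial part of the proof of Theorem~\ref{Vsemiconcavity}: the estimate showing that $x\mapsto V(t,x)$ is locally semiconcave, uniformly in $t$, with bound $C|h|^{2}$ does not use the time-regularity of $m^{\eta}$ at all, so it carries over unchanged. Likewise, in the space-time estimate I would reuse the same competitor control $\bar{u}(s)=u^{*}\big(\tfrac{t+\delta+s}{2}\big)$ and the same splitting into terms $I$ and $II$ as in the displayed computation preceding \eqref{semi1}. Term $I$ was already bounded there by a constant times $|h|^{2}+|\delta|^{2}$, so it needs no modification.

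The only place where the non-linear modulus $|\delta|^{3/2}$ entered is estimate \eqref{semi2}, where the $\tfrac12$-H\"older bound of Theorem~\ref{holder} was applied to $\int_{t}^{t+\delta} d_{1}(m^{\eta}_{2s-t-\delta},m^{\eta}_{s})\,ds$. Here I would instead invoke the Lipschitz bound, obtaining
\[
\int_{t}^{t+\delta} d_{1}\big(m^{\eta}_{2s-t-\delta},m^{\eta}_{s}\big)\,ds \le \text{Lip}(m^{\eta})\int_{t}^{t+\delta}|s-t-\delta|\,ds = \frac{\text{Lip}(m^{\eta})}{2}\,|\delta|^{2},
\]
so the temporal contribution is now genuinely quadratic. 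The curve-distance part of term $II$ is still controlled by \eqref{semi3}, namely by $\delta\,e^{T\|A\|}|h|+C\delta^{2}$; the mixed term $\delta|h|$ is absorbed into $\tfrac12(|h|^{2}+|\delta|^{2})$ by Young's inequality, and the boundedness of $\|u^{*}\|_{\infty}$ and $\|\bar{u}\|_{\infty}$ used in \eqref{semi3} is available exactly as in Theorem~\ref{Vsemiconcavity}.

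Collecting these estimates yields $V(t+\delta,x+h)+V(t-\delta,x-h)-2V(t,x)\le \Lambda'(|h|^{2}+|\delta|^{2})$ on $[0,T]\times\overline{B}_{R}$, which is the asserted local semiconcavity with linear modulus. The step requiring the most care is purely bookkeeping: verifying that no other term of the decomposition carries a modulus worse than quadratic (in particular that the $\delta|h|$ cross term and the velocity bounds of \eqref{semi3} do not degrade the exponent). There is no genuine analytic obstacle here, since the entire content of the corollary is that improving the time-regularity of $m^{\eta}$ from $C^{0,1/2}$ to $C^{0,1}$ converts the exponent $\tfrac32$ into $2$ in the temporal part of the semiconcavity estimate.
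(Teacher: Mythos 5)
Your proposal is correct and matches the argument the paper intends for this corollary: the statement is left without an explicit proof precisely because it follows by re-running the proof of Theorem \ref{Vsemiconcavity} with the Lipschitz bound on $t\mapsto m^{\eta}_{t}$ (from Theorem \ref{lipschitzmfg}) replacing the $\tfrac12$-H\"older bound of Theorem \ref{holder}, so that the estimate \eqref{semi2} becomes $O(|\delta|^{2})$ instead of $O(|\delta|^{3/2})$. Your handling of the cross term $\delta|h|$ via Young's inequality and your observation that the purely spatial estimate and term $I$ are unaffected are exactly the required bookkeeping.
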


\section{Mean Field Games: PDEs system}
\subsection{Optimal syntesis}

In order to deduce the PDE system for our Mean Field Games problem, we have to derive first some optimality conditions for the following problem:
	\begin{align*}
\tag{{\bf OC}}
 J(x,u)=\inf_{\gamma \in \Gamma_{T}(x)} \left\{g(\gamma(T))+\int_{0}^{T}{L(t, \gamma(t), u(t))\ dt}\right\}.
\end{align*}
	As usual, let $V$ be the value function of the above {\bf (OC)} problem.

	Let $p_{0}$ be a point in $D^{*}_{x}V(t_{0}, x_{0})$ such that $(t_{0}, x_{0}) \in [0,T] \times \overline{B}_{R}$. By definition of reachable gradient, there exists a sequence $\{ x_{k}\}_{k \in \N}$ such that
	\begin{align*}
	x_{k} & \to x_{0}
	\\
	-p_{0}& =\lim_{k \to \infty} D_{x}V(t_{0}, x_{k}).
	\end{align*}

Let $\bar{u}_{k}$ and $\bar{\gamma}_{k}$ be, respectively, an optimal control and an optimal trajectory with starting point $(t_{0}, x_{k})$. By the maximum principle (Theorem \ref{maxprinciple}), we have that there exists an absolutely continuous arc $\bar{p}_{k}$ such that
\begin{align}\label{dualarcseq}
\begin{cases}
& -\dot{\bar{p}}_{k}(t)=A^{*}\bar{p}_{k}(t)+D_{x}L(t, \bar{\gamma}_{k}(t), \bar{u}_{k}(t))
\\
& \bar{p}_{k}(T)=Dg(\bar{\gamma}_{k}(T)).
\end{cases}
\end{align}
By the maximum principle in Hamiltonian form (Theorem \ref{Hmaxprinciple})
\begin{align}\label{maxpr1}
\begin{cases}
&\dot{\bar{\gamma}}_{k}(t)=-D_{p}H(t, \bar{\gamma}_{k}(t), \bar{p}_{k}(t))
\\
& \dot{\bar{p}}_{k}(t)=D_{x}H(t, \bar{\gamma}_{k}(t), \bar{p}_{k}(t)).
\end{cases}
\end{align}

	Since the sequence $\{ x_{k}\}_{k \in \N}$ is convergent, by Corollary \ref{infboundmin} and Proposition \ref{boundedvelocity} we obtain that $\{ \gamma_{k}\}_{k \in \N}$ is equibounded and equicontinuous.

	Moreover, by \eqref{dualarcseq} we have that for any $t \geq t_{0}$
	\begin{equation*}
	\bar{p}_{k}(t)=e^{(T-t)A^{*}}Dg(\bar{\gamma}_{k}(T))+\int_{t}^{T}{e^{(s-t)A^{*}}D_{x}L(s, \bar\gamma_{k}(s), \bar{u}_{k}(s))\ ds}.
	\end{equation*}
Thus, it easily follows that also the sequence of dual arcs $\{ \bar{p}_{k}\}_{k \in \N}$ is equibounded and equicontinuous. Therefore, there exist an absolutely continuous arc $\bar{p}$ and a curve $\bar\gamma$ such that $\bar{p}_{k} \to \bar{p}$ and $\bar\gamma_{k} \to \bar\gamma$, uniformly as $k \to \infty$.
	
	Since $L$ is a strict Tonelli Lagrangian, see Definition \ref{def2}, we have that there exists a constant $\kappa \geq 0$ such that
	\begin{equation*}
	|D_{x}L(t,x,u)| \leq \kappa(1+|u|^{2}).
	\end{equation*}
	Moreover, since $x \in \overline{B}_{R}$ we deduce by \cite[Theorem 7.4.6]{bib:SC} that there exists a constant $\tilde{\kappa} \geq 0$ such that $\| u_{k} \|_{\infty} \leq \tilde{\kappa}$. Consequently, we obtain that $D_{x}L(t, \bar\gamma_{k}(t), \bar{u}_{k}(t))$ weakly converges in $L^{2}(0,T; \R^{d})$ to $D_{x}L(t, \bar\gamma(t), \bar{u})$ as $k \to \infty$.

	Therefore, passing to the limit in \eqref{dualarcseq} we get that $\bar{p}$ is a solution of the limit equation and by the maximum principle the pair $(\bar{\gamma}, \bar{p})$ solves system \eqref{maxpr1}.
	In conclusion, as $k \to \infty$ in the value function we obtain that the curve $\bar\gamma$ is a minimizer for $(t_{0}, x_{0})$.

\subsection{Weak solutions}

In this section, we consider the case of splitted Langrangian, that is $L$ is of the form \eqref{splittedlagrangian}. 

We recall that, given the control system \eqref{dyn}, the Hamiltonian associated with the Lagrangian function $L$ is defined as
\begin{equation*}
H(x,p)=\sup_{u \in \R^{k}} \Big\{-\langle p, Ax+Bu \rangle - \ell(x,u) \Big\}.
\end{equation*}

For $\alpha > 1$, let $m_{0} \in \PP_{\alpha}(\R^{d})$ be a Borel probability measure and introduce the following Mean Field Games PDEs system 
\begin{align}\label{PDEsystem}
\begin{cases}
-\partial_{t} V(t,x) + H(x, D_{x}V(t,x))=F(x,m_{t}), \quad & (t,x) \in [0,T] \times \R^{d}
\\
\partial_{t}m_{t}+\ddiv\Big(m_{t}D_{p}H(x, D_{x}V(t,x)) \Big)=0, \quad & (t,x) \in [0,T] \times \R^{d}
\\
m_{0}=m_{0}, \quad V(T,x)=G(x, m_{T}),\ \forall\ x \in \R^{d}.
\end{cases}
\end{align}

\begin{definition}[{\bf Weak solutions}]
We say that $(V, m) \in W^{1,\infty}([0,T] \times \mathbb{R}^{d}) \times C([0,T], \mathcal{P}_{\alpha}(\mathbb{R}^{d}))$ is a weak solution of the Mean Field Games PDEs system if:
\begin{itemize}
\item[($i$)] $m$ is a solution in the sense of distribution of the continuity equation, i.e. for any test function $\varphi \in C^{1}_{c}([0,T) \times \R^{d})$ we have that
\begin{equation*}
-\int_{\R^{d}}{\varphi(0,x)\ m_{0}(dx)}= \int_{0}^{T}\int_{\R^{d}}{\Big(\partial_{t} \varphi(t,x)-\langle D_{x}\varphi(t,x), D_{p}H(x, D_{x}V(t,x)) \rangle \Big)\ m_{t}(dx)}.
\end{equation*}
\item[($ii$)] $V$ is a continuous viscosity solution of Hamilton-Jacobi equation.
\end{itemize}
\end{definition}

\begin{remark}\em\label{remark1}
We recall that by classical optimal control theory, see for instance \cite{bib:SC}, the following holds:
\begin{enumerate}
	\item from the maximum principle one can deduce that any minimizer $\gamma$ of problem \eqref{min} has the same regularity of the data, thus in this case we obtain that $\gamma \in C^{2}$, see Theorem \ref{Hmaxprinciple};
	\item given a Mean Field Games equilibrium $\eta$ we have that for any $x \in \supp(m^{\eta}_{t})$ the value function $V$ is differentiable since the value function of an optimal control problem with a strictly convex Hamiltonian (with respect to $p$) is known to be differentiable in the interior of any optimal trajectory, see for instance \cite[Theorem 6.4.7]{bib:SC} and \cite[Proposition 4.4]{bib:CH}.
	\end{enumerate}
\end{remark}

\begin{theorem}[{\bf Equivalence between mild and weak solutions}]\label{weaksolutions}
Assume {\bf (L1)}---{\bf (L4)} and {\bf (H1)}. Fix $\alpha > 1$ and let $m_{0} \in \PP_{\alpha}(\R^{d})$ be an absolutely continuous with respect the Lebesgue measure and with compact support. Then, $(V,m) \in C([0,T] \times \mathbb{R}^{d}) \times C([0,T], \mathcal{P}_{\alpha}(\mathbb{R}^{d}))$ is a mild solution of the Mean Field Games problem if and only if it is a weak solution of system \eqref{PDEsystem}.
\end{theorem}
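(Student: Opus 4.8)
The plan is to prove the two implications separately, exploiting that by Definition \ref{mildsolution} a mild solution is built from a Mean Field Games equilibrium $\eta$, while a weak solution is a pair $(V,m)$ satisfying the two PDEs of \eqref{PDEsystem}. Throughout, the crucial regularity input is that, by Theorem \ref{Vsemiconcavity} and Theorem \ref{Vlipschitz}, the value function is locally Lipschitz and locally semiconcave, hence differentiable $\mathcal{L}^{d}$-a.e.; combined with the absolute continuity of $m_{0}$ this guarantees that optimal trajectories are unique for $m_{0}$-a.e.\ initial datum and that the feedback drift $b(t,x):=-D_{p}H(x,D_{x}V(t,x))$ is well defined $m_{t}$-a.e.

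For the implication mild $\Rightarrow$ weak, I would first observe that $V$ as defined in \eqref{mfgvaluefunction} is, by the dynamic programming principle \eqref{DPP} and standard optimal control theory, a continuous viscosity solution of the Hamilton-Jacobi equation in \eqref{PDEsystem} with terminal trace $V(T,x)=G(x,m_{T})$; together with Theorem \ref{Vlipschitz} this yields local membership in $W^{1,\infty}$ and item $(ii)$ of the weak solution. For item $(i)$ I would use that $\eta$ is concentrated on minimizers: by Remark \ref{remark1} each such curve $\gamma$ is $C^{2}$, the value function is differentiable along its interior, and by the Hamiltonian maximum principle (Theorem \ref{Hmaxprinciple}) it satisfies $\dot\gamma(t)=-D_{p}H(\gamma(t),D_{x}V(t,\gamma(t)),m^{\eta}_{t})$. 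Then, for $\varphi\in C^{1}_{c}([0,T)\times\R^{d})$, writing $\int_{\R^{d}}\varphi(t,x)\,m^{\eta}_{t}(dx)=\int_{\Gamma_{T}}\varphi(t,\gamma(t))\,\eta(d\gamma)$ and differentiating in $t$ under the integral sign via the chain rule, one obtains exactly the distributional continuity equation, since $e_{t}\sharp\eta=m^{\eta}_{t}$.

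For the converse, weak $\Rightarrow$ mild, I would argue in two steps. First, since $V$ is a continuous viscosity solution of the Hamilton-Jacobi equation with the prescribed terminal datum and the fixed flow $m$, uniqueness of viscosity solutions identifies $V$ with the value function of the control problem \eqref{min} associated with $m$, giving item $(ii)$ of Definition \ref{mildsolution}. Second, I must produce an equilibrium $\eta$ with $e_{t}\sharp\eta=m_{t}$. The drift $b(t,x)=-D_{p}H(x,D_{x}V(t,x))$ is bounded on compact sets by \eqref{hamiltoniangrowth}, and $m$ solves $\partial_{t}m+\ddiv(mb)=0$ in the sense of distributions; hence I would invoke the superposition principle (e.g.\ \cite[Theorem 8.2.1]{bib:AGS}) to obtain $\eta\in\PP(\Gamma_{T})$ concentrated on absolutely continuous integral curves of $b$ with $e_{t}\sharp\eta=m_{t}$ and $e_{0}\sharp\eta=m_{0}$. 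Finally I would verify that $\eta$-a.e.\ curve is a minimizer: because $m_{0}\ll\mathcal{L}^{d}$ with compact support and $V(t,\cdot)$ is semiconcave, $V$ is differentiable along $\mathcal{L}^{d}$-a.e.\ characteristic, so the pair $(\gamma,p)$ with $p(t)=D_{x}V(t,\gamma(t))$ solves the Hamiltonian system of Theorem \ref{Hmaxprinciple}, and the optimal synthesis established in the previous subsection guarantees optimality. This makes $\eta$ a Mean Field Games equilibrium and establishes item $(i)$.

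The main obstacle is the converse direction, and specifically the verification step after the superposition principle: one must ensure that the integral curves produced abstractly are genuine minimizers. This is delicate because $V$ need not be everywhere differentiable; the argument rests on combining semiconcavity of $V$, the absolute continuity of $m_{0}$, and the no-crossing property of characteristics to confine $m_{t}$ to the set where $V(t,\cdot)$ is differentiable, so that the feedback $\dot\gamma=-D_{p}H(\gamma,D_{x}V(t,\gamma))$ is unambiguous and coincides with the optimal control synthesized above. Controlling the integrability of the drift along the flow, so that the superposition principle applies legitimately, is the accompanying technical point.
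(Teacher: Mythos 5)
Your proposal is correct and follows essentially the same route as the paper: for mild $\Rightarrow$ weak, the viscosity-solution characterization of $V$ plus differentiation of $t\mapsto\int_{\Gamma_{T}}\varphi(t,\gamma(t))\,\eta(d\gamma)$ along the feedback $\dot\gamma=-D_{p}H(\gamma,D_{x}V(t,\gamma))$ given by Theorem \ref{Hmaxprinciple}; for weak $\Rightarrow$ mild, the representation of $V$ as the value function, the superposition principle of \cite[Theorem 8.2.1]{bib:AGS} to lift $m$ to a measure on $\Gamma_{T}$ concentrated on integral curves of the drift, and the optimal synthesis (using $m_{0}\ll\mathcal{L}^{d}$ with compact support and the regularity of $V$) to conclude those curves are minimizers. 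The points you flag as delicate are exactly the ones the paper also handles via the optimal-synthesis subsection, so no genuinely different ideas are needed.
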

\begin{proof}
First, we show that any mild solutions $(V, m^{\eta})$ is a weak solution.

Let $V$ be the value function defined as in Definition \ref{mildsolution}, in expression \eqref{mfgvaluefunction}. Then, it is well-known that it is a continuous viscosity solution of the Hamilton-Jacobi equation in system \eqref{PDEsystem} and satisfies the terminal condition. Hence, we are left to prove that $m^{\eta}$ is a solution of the continuity equation in system \eqref{PDEsystem} in the sense of distributions.

Indeed, for any $\varphi \in C^{1}_{c}([0,T) \times \R^{d})$, we have that
\begin{align*}
 \frac{d}{dt}& \int_{\R^{d}}{\varphi(t,x)\ m^{\eta}_{t}(dx)}= \frac{d}{dt}\int_{\Gamma_{T}}{\varphi(t,\gamma(t))\ \eta(d\gamma)}
\\
=& \int_{\Gamma_{T}}{\Big(\partial_{t} \varphi(t,\gamma(t))+\langle D_{x}\varphi(t,\gamma(t)), \dot\gamma(t) \rangle \Big)\ \eta(d\gamma)},
\end{align*}
where the last integral is well-posed by point (1) in Remark \ref{remark1}.
Since $\eta \in \PP_{m_{0}}(\Gamma_{T}, R)$ is a Lipschitz Mean Field Games equilibrium we know that $\eta$ is supported on the minimizers of problem \eqref{min}. So, by Theorem \ref{Hmaxprinciple} we know that
\begin{equation*}
\dot\gamma(t)=-D_{p}H\big(\gamma(t), D_{x}V(t,\gamma(t))\big).
\end{equation*}
Therefore, 
\begin{align*}
 \frac{d}{dt}& \int_{\R^{d}}{\varphi(t,x)\ m^{\eta}_{t}(dx)} 
\\
= & \int_{\Gamma_{T}}{\Big(\partial_{t} \varphi(t,\gamma(t))+\langle D_{x}\varphi(t,\gamma(t)), \dot\gamma(t) \rangle \Big)\ \eta(d\gamma)}
\\
= & \int_{\Gamma_{T}}{\Big(\partial_{t} \varphi(t,\gamma(t))-\langle D_{x}\varphi(t,\gamma(t)), D_{p}H(\gamma(t), D_{x}V(t,\gamma(t)))  \rangle \Big)\ \eta(d\gamma)}
\\
=& \int_{\R^{d}}{\Big(\partial_{t} \varphi(t,x)-\langle D_{x}\varphi(t,x), D_{p}H(x, D_{x}V(t,x))  \rangle \Big)\ m^{\eta}_{t}(dx)},
\end{align*}
where the last integral in above series of equality is well-posed by point (2) in Remark \ref{remark1}. 
The conclusion follows by integrating the above equalities over $[0,T]$.

Now, let $(V,m)$ be a weak solution of Mean Field Games system. Since $V$ is a viscosity solution of the Hamilton-Jacobi equation we know that it can be represented by the formula \eqref{mfgvaluefunction} in Definition \ref{mildsolution}. Hence, we only have to prove that there exists a Mean Field Games equilibrium $\eta$ such that $m_{t}=e_{t} \sharp \eta$. 

Since $m$ is a solution of the continuity equation in the sense of distributions, by the superposition principle \cite[Theorem 8.2.1]{bib:AGS} we know that there exists a probability measure $\mu \in \PP(\Gamma_{T})$ such that $m_{t}= e_{t} \sharp \mu$ and $\mu$-a.e. is a solution of the following equation
\begin{equation}\label{feedback}
\dot\gamma(t)=-D_{p}H(\gamma(t), D_{x}V(t,\gamma(t))), \quad t \in [0,T].
\end{equation}
As $m_{0}=e_{0} \sharp \mu$, by Theorem \ref{disintegration} there exists a family of Borel probability measures $\mu_{x}$, for any $x \in \supp(m_{0})$, such that
\begin{equation*}
\mu(d\gamma)=\int_{\R^{d}}{\mu_{x}(d\gamma)m_{0}(x)\ dx}.
\end{equation*}
Since $m_{0}$ is absolutely continuous with compact support and the value function $V$ is locally Lipschitz continuous, it follows that $m_{0}$-a.e. and $\mu_{x}$-a.e. $\gamma$ is a solution of \eqref{feedback} such that $\gamma(0)=x$. Therefore, by the optimal synthesis explained above, such a curve $\gamma$ is a minimizer of the underlying optimal control problem. Hence, the measures $\mu_{x}$ are supported on minimizing curves of the optimal control problem. Consequently, $\mu$ is a Mean Field Games equilibrium for $m_{0}$. 

\end{proof}

The following result is an immediate consequence of Theorem \ref{weaksolutions} and Theorem \ref{uniquenessmfg}.

\begin{corollary}
Assume that $F$ is strictly monotone, in the sense of definition \ref{strictmonotonicity}. Let $\eta_{1}$, $\eta_{2} \in \PP_{m_{0}}(\Gamma_{T}, R)$ be two Lipschitz Mean Field Games equilibria and let $(V_{1}, m^{\eta_{1}})$, $(V_{2}, m^{\eta_{2}})$ be, respectively, the weak solutions of system \eqref{PDEsystem}. Then, $V_{1} \equiv V_{2}$.
\end{corollary}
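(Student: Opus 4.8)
The plan is to chain together the two results cited just before the statement, treating the corollary as a transfer of the mild-solution uniqueness through the mild/weak equivalence. First I would observe that, under the standing structural hypotheses (splitted Lagrangian \eqref{splittedlagrangian}, assumptions {\bf (L1)}--{\bf (L4)}, {\bf (H1)}, and $m_{0} \in \PP_{\alpha}(\R^{d})$ absolutely continuous with compact support), Theorem \ref{weaksolutions} applies to each pair $(V_{i}, m^{\eta_{i}})$. Since by assumption these are weak solutions of system \eqref{PDEsystem}, the ``weak $\Rightarrow$ mild'' direction of that theorem guarantees that each $(V_{i}, m^{\eta_{i}})$ is in fact a mild solution of the Mean Field Games problem, associated with the Lipschitz equilibrium $\eta_{i} \in \PP_{m_{0}}(\Gamma_{T}, R)$. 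In particular, $V_{i}$ is realized as the value function \eqref{mfgvaluefunction} of the optimal control problem driven by the flow $m^{\eta_{i}}_{t} = e_{t} \sharp \eta_{i}$.

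With both value functions now expressed as value functions of optimal control problems associated with genuine equilibria, I would then apply Theorem \ref{uniquenessmfg} directly to the pair $\eta_{1}, \eta_{2}$. Under strict monotonicity, that theorem asserts that the cost functionals $J_{\eta_{1}}$ and $J_{\eta_{2}}$ coincide and, consequently, that the first components of the two mild solutions agree; this is exactly $V_{1} \equiv V_{2}$. Internally the argument is the usual one: the monotonicity inequality applied to the discrepancy between $m^{\eta_{1}}$ and $m^{\eta_{2}}$ collapses to an equality and forces $F(\cdot, m^{\eta_{1}}_{t}) = F(\cdot, m^{\eta_{2}}_{t})$ for every $t$ (and likewise for the terminal couplings), so the two optimization problems are identical; but here all of this is packaged inside the cited theorem, and no fresh estimate is required.

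The only point requiring care is bookkeeping of hypotheses rather than any genuine analytic difficulty. I must ensure that the assumptions demanded by Theorem \ref{weaksolutions} and by Theorem \ref{uniquenessmfg} are simultaneously in force, and that the equilibria $\eta_{1}, \eta_{2}$ recovered from the equivalence theorem are admissible inputs to the uniqueness theorem. In particular I would note the mild discrepancy that the present statement invokes strict monotonicity of $F$ only, whereas Theorem \ref{uniquenessmfg} is stated for $F$ \emph{and} $G$ strictly monotone; to apply it cleanly one should carry along the strict monotonicity of $G$ from the standing setting. Once this matching is verified, the conclusion is immediate.
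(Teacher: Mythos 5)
Your proposal is correct and follows exactly the paper's route: the paper gives no separate proof, stating only that the corollary is ``an immediate consequence of Theorem \ref{weaksolutions} and Theorem \ref{uniquenessmfg},'' which is precisely your chain of weak~$\Rightarrow$~mild followed by the mild-solution uniqueness theorem. Your bookkeeping remark about the hypothesis mismatch (the corollary invokes strict monotonicity of $F$ only, while Theorem \ref{uniquenessmfg} is stated for both $F$ and $G$) is a valid and careful observation about an imprecision in the paper's statement, not a flaw in your argument.
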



\appendix

\section{Proof of Theorem \ref{Vlipschitz}}\label{appendix1}

We divide the proof in two steps: first, we prove that $V$ is locally Lipschitz in space and then, we prove that it is locally Lipschitz in both the variables.

Let $R$ be a positive radius and denote by $B_{R}$ the ball of radius $R$ centered in the origin on $\R^{d}$. Fix $x \in \overline{B}_{R}$ and $h \in \R^{d}$ such that $x+h \in \overline{B}_{R}$. Then, given an  optimal control $u^{*}$ associated with $(t,x) \in [0,T] \times \overline{B}_{R}$ we get that
\begin{align}\label{lip1}
\begin{split}
V(t,x+h)-V(t,x) \leq & \int_{t}^{T}{\big(L(\gamma(s; t, x+h, u^{*}), u^{*}(s), m^{\eta}_{s}) - L(\gamma(s; t ,x,u^{*}), u^{*}(s), m^{\eta}_{s}) \big)\ ds} 
\\
+& G(\gamma(T; t,x+h,u^{*}),m^{\eta}_{T}) - G(\gamma(T; t,x,u^{*}), m^{\eta}_{T}). 
\end{split}
\end{align}
Thus, we have to estimate the distance between two admissible paths: the one starting in $(t,x)$ and the other one starting in $(t,x+h)$. Recall that 
\begin{equation*}
\gamma(s;t,x,u)=e^{(s-t)A}x+\int_{t}^{s}{e^{(\tau-t)A}Bu^{*}(\tau)\ d\tau}, \quad \forall\ s \in [t,T]
\end{equation*}
to obtain
\begin{equation*}
|\gamma(s;t,x+h,u^{*})-\gamma(s;t,x,u^{*})|	\leq e^{T\| A\|}|h|, \quad \forall\ s \in [t,T].
\end{equation*}
Therefore, by assumption {\bf (L2)} we get 
\begin{equation*}
G(\gamma(T; t,x+h,u^{*}),m^{\eta}_{T}) - G(\gamma(T; t,x,u^{*}), m^{\eta}_{T}) \leq \| G \|_{\infty} e^{T\| A \|} |h|.
\end{equation*}
So, we just have to bound the integral term in \eqref{lip1}. By assumption {\bf (L3)}, we have that
\begin{align*}
& \int_{t}^{T}{\big(L(\gamma(s; t, x+h, u^{*}), u^{*}(s), m^{\eta}_{s}) - L(\gamma(s; t ,x,u^{*}), u^{*}(s), m^{\eta}_{s}) \big)\ ds} 
\\
= & \int_{t}^{T} \int_{0}^{1}{\langle D_{x}L(\lambda \gamma(s;t,x+h,u^{*}) + (1-\lambda) \gamma(s; t,x,u^{*}), u^{*}(s), m^{\eta}_{s}, \gamma(s; t,x+h,u^{*})-\gamma(s;t,x,u^{*}) \rangle\ ds} 
\\
\leq &  \int_{t}^{T} \int_{0}^{1}{\big| D_{x}L(\lambda \gamma(s;t,x+h,u^{*}) + (1-\lambda) \gamma(s; t,x,u^{*})\big| \big|u^{*}(s), m^{\eta}_{s}, \gamma(s; t,x+h,u^{*})-\gamma(s;t,x,u^{*})\big|\ ds} 
\\
\leq & \int_{t}^{T}\int_{0}^{1}{c_{2}\big(1+|u^{*}(s)| \big) \big|\gamma(s; t,x+h,u^{*})-\gamma(s;t,x,u^{*})\big|\ ds}
\\
\leq &\ Tc_{2}e^{T\| A \|} |h| + c_{2}\sqrt{T} \| u^{*} \|_{2} |h|= \left(c_{2}Te^{T\| A \|} + c_{2}\sqrt{T}K \right) |h|,
\end{align*}
where $\| u^{*} \|_{2} \leq K$ by Proposition \ref{bound}. Then, we conclude that
\begin{equation*}
V(t,x+h)-V(t,x) \leq \left(c_{2}Te^{T\| A \|} + c_{2}\sqrt{T}K+ \| G \|_{\infty}e^{T\|A\|} \right) |h|.
\end{equation*}

By similar considerations, one can easily prove that the reverse inequality   also holds true. Therefore, we have that $V$ is locally Lipschitz in space.

We now prove  that $V$ is locally Lipschitz in space and time on $[0,T] \times \overline{B}_{R}$ for any $R>0$. Fix $t \in [0,T]$, $x \in \overline{B}_{R}$ and let $\delta \in \R$ be such that $t+\delta \in [0,T]$.

We recall that, by the Dynamic Programming Principle \eqref{DPP},
\begin{equation}\label{DPPmfg}
V(t,x) = \inf_{u \in L^{2}} \left\{ V(t+\delta, \gamma(t+\delta; t,x,u)) + \int_{t}^{t+\delta}{L(\gamma(s; t,x,u), u(s), m^{\eta}_{s})\ ds} \right\}.
\end{equation}
Moreover, by \cite[Theorem 7.4.6]{bib:SC} we know that, under the assumptions {\bf (L1)}--{\bf (L4)}, for any $\eta \in \PP_{m_{0}}(\Gamma_{T})$ and any $x \in \R^{d}$, problem \eqref{min} is equivalent to the following one
\begin{equation*}
\inf_{u \in L^{\infty}(0,T;\R^{k})} J_{\eta}(x,u).
\end{equation*}
Thus, we can minimize over the set of bounded controls.
Let the control $u^{*} \in L^{\infty}$ be optimal for $V(t,x)$. By \eqref{DPPmfg} we deduce that for any $\epsilon\geq 0$
\begin{align*}
V(t,x)+\epsilon \geq \int_{t}^{t+\delta}{L(\gamma(s;t,x,u^{*}), u^{*}(s), m^{\eta}_{s})\ ds}+V(t+\delta, \gamma(t+\delta;t,x,u^{*})).
\end{align*}
Hence, we have that 
\begin{align}\label{ref}
\begin{split}
& V(t+\delta, x) -V(t,x) \leq V(t+\delta, x)-V(t+\delta, \gamma(t+\delta; t,x,u^{*}))
\\
- & \int_{t}^{t+\delta}{L(\gamma(s;t,x,u^{*}), u^{*}(s), m^{\eta}_{s})\ ds}+\epsilon
\\
\leq & \left(c_{2}Te^{T\| A \|} + c_{2}\sqrt{T}K+ \| G \|_{\infty}e^{T\|A\|} \right) |x-\gamma(t+\delta; t,x,u^{*})|+\delta\left( c_{1}+  \frac{1}{c_{0}}\| u^{*}\|_{\infty}\right),
\end{split}
\end{align}
where the last inequality holds true by the first step of the proof and assumption {\bf (L3)}. Moreover, since the curve $\gamma(\cdot; t,x,u^{*})$ is Lipschitz continuous in time, we know that the first term of the right-hand side is bounded by a constant times $\delta$. Thus, the proof of first estimate is complete.

On the other hand, again by \eqref{DPPmfg} we know that taking $u\equiv 0$ we have that
\begin{equation*}
V(t,x) \leq V(t+\delta, \gamma(t+\delta;t,x,0))+\int_{t}^{t+\delta}{L(\gamma(s;t,x,0), 0, m^{\eta}_{s})\ ds}. 
\end{equation*}
Therefore, adding and subtracting the term $V(t+\delta, x)$ we get that
\begin{align*}
V(t,x) -V(t+\delta,x) 
\\
\leq & V(t+\delta, \gamma(t+\delta;t,x,0))-V(t+\delta,x)+\int_{t}^{t+\delta}{L(\gamma(s;t,x,0), 0, m^{\eta}_{s})\ ds}.
\end{align*}
Hence, by the same considerations as in \eqref{ref} we get the result. 
\qed


\medskip
\medskip

\noindent {\bf Acknowledgements:} The authors would like to thank Pierre Cardaliaguet for fruitful discussions and comments on a preliminary version of this paper. The authors would like to express their gratitude to the anonymous reviewers for their careful reading of our manuscript and many insightful comments and suggestions. Piermarco Cannarsa was partly supported by Istituto Nazionate di Alta Matematica (GNAMPA 2019 Research Projects) and by the MIUR Excellence Department Project awarded to the Department of Mathematics, University of Rome Tor Vergata, CUP E83C18000100006. Cristian Mendico was partly supported by Istituto Nazionale di Alta Matematica (GNAMPA 2019 Research Projects). Part of this paper was completed while the second author was visiting the Department of Mathematics of the University of Rome Tor Vergata.

\medskip\medskip\medskip

\end{document}